\newtheorem{theorem}{Theorem}[section]
\newtheorem{corollary}[theorem]{Corollary}
\newtheorem{lemma}[theorem]{Lemma}
\newtheorem{proposition}[theorem]{Proposition}
\newtheorem{example}[theorem]{Example}
\newtheorem{definition}[theorem]{Definition}
 \newtheorem*{thm*}{Theorem}
 \newtheorem*{rem*}{Remark}
\theoremstyle{definition}
\theoremstyle{remark}
\numberwithin{equation}{section}
\newcommand{\GG}{{\mathbb G}}
\newcommand{\reals}{\mathbb{R}}
\newcommand{\R}{\mathbb{R}}
\newcommand{\naturals}{\mathbb{N}}
\newcommand{\dist}{\mathrm{dist}}
\newcommand{\Tan}{\mathrm{Tan}}
\newcommand{\LTan}{\mathrm{LTan}}
\newcommand{\pTan}{\mathrm{pTan}}
\newcommand{\pLTan}{\mathrm{pLTan}}
\newcommand{\graph}{\mathrm{graph}}
\newcommand{\Li}{\mathop{\mathrm{Li}}}
\newcommand{\Ls}{\mathop{\mathrm{Ls}}}
\newcommand{\Para}{\mathrm{P}}
\newcommand{\ang}{\mathrm{ang}}
\newcommand{\calP}{\mathcal{P}}
\newcommand{\calG}{\mathcal{G}}
\newcommand{\calJ}{\mathcal{J}}
\newcommand{\dd}{\mathrm{d}}
\newcommand{\B}{\mathrm{B}}
\newcommand{\der}{\mathrm{der}}
\newcommand{\GL}{\mathop{\mathrm{GL}}}
\newcommand{\intt}{\mathrm{int}}
\newcommand{\labelpag}[1]{\refstepcounter{enumi}\label{#1}}
\begin{document}

\title[Geometric Characterizations of $C^{1}$ manifold]{Geometric Characterizations of $C^{1}$ Manifold in Euclidean spaces by Tangent Cones}
\author{Francesco Bigolin}
\address{Dipartimento di Matematica\\
Universit\`{a} di Trento, 38050 Povo (TN), Italy}
\email{bigolin@science.unitn.it}
\author{Gabriele H. Greco}
\address{Dipartimento di Matematica\\
Universit\`{a} di Trento, 38050 Povo (TN), Italy}
\email{greco@science.unitn.it}

\dedicatory{Commemorating the 150th Birthday of Giuseppe Peano (1858-1932)}
\date{February 12, 2012}
\begin{abstract} 
A remarkable and elementary fact that a locally compact set $F$ of Euclidean space  is a smooth manifold if and only if the lower and upper paratangent cones to $F$ coincide at  every point, is proved. 
 The celebrated \textsc{von Neumann}'s result (1929) that a locally compact subgroup of  the general linear group is a smooth manifold, is  a straightforward application.
A   historical account on the subject is provided in order to enrich the mathematical panorama. Old  characterizations of smooth manifold (by tangent cones), due to \textsc{Valiron} (1926, 1927) and \textsc{Severi} (1929, 1934)  are recovered; modern characterizations, due to \textsc{Gluck} (1966, 1968),   \textsc{Tierno} (1997) and  \textsc{Shchepin} and \textsc{Repov\v{s}} (2000) are restated.
\end{abstract}

\maketitle

\section*{Introduction}
\label{sec-intro}

A primary aim of this paper is to prove that

\begin{thm*}[\textbf{Four-cones coincidence}]
 A non-empty subset $F$ of $\mathbb{R}^{n}$ is a $C^{1}$-manifold
if and only if $F$ is locally compact and the lower and upper paratangent
cones to $F$ coincide at  every point, i.e., 
\begin{equation*}
\mathrm{pTan}^{-}(F,x)=\mathrm{pTan}^{+}(F,x)\text{ for every }x\in F. 
\end{equation*}
\end{thm*}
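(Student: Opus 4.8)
The plan is to prove the substantive implication — local compactness together with $\pTan^{-}=\pTan^{+}$ implies $C^{1}$-manifold — the converse being routine and sketched at the end. Fix $x_{0}\in F$ and, for $x\in F$, write $T_{x}:=\pTan^{-}(F,x)=\pTan^{+}(F,x)$. First I would record the algebraic backbone: \emph{each $T_{x}$ is a linear subspace of $\R^{n}$ and $\Tan(F,x)=T_{x}$}. Indeed $\pTan^{-}(F,x)\subseteq\Tan(F,x)\subseteq\pTan^{+}(F,x)$ forces $\Tan(F,x)=T_{x}$; and since the lower paratangent cone is always convex (e.g.\ because it coincides with Clarke's tangent cone $\Li_{y\to x}\Tan(F,y)$) while the upper paratangent cone is always symmetric (interchange the two moving points), the common value $T_{x}$ is a closed, symmetric, convex cone, hence a subspace. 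Set $m:=\dim T_{x_{0}}$; if $m=0$ then $\Tan(F,x_{0})=\{0\}$, i.e.\ $x_{0}$ is isolated in $F$ and $\{x_{0}\}$ is a $0$-dimensional $C^{1}$-manifold, so from now on assume $m\ge 1$.

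Second, the tangent field is continuous. Squeezing the standard inclusions $\pTan^{-}(F,x_{0})\subseteq\Li_{y\to x_{0}}\Tan(F,y)$ and $\Ls_{y\to x_{0}}\Tan(F,y)\subseteq\pTan^{+}(F,x_{0})$ between the two equal cones gives $\Li_{y\to x_{0}}T_{y}=\Ls_{y\to x_{0}}T_{y}=T_{x_{0}}$ (recall $T_{y}=\Tan(F,y)$), i.e.\ $y\mapsto T_{y}$ is continuous at $x_{0}$, hence everywhere on $F$. Now let $P\colon\R^{n}\to T:=T_{x_{0}}$ be orthogonal projection and $Q:=I-P$. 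Every limit of normalized secants $(p-q)/|p-q|$ with $p,q\in F$, $p\neq q$, $p,q\to x_{0}$, lies in $\pTan^{+}(F,x_{0})\cap S^{n-1}=T\cap S^{n-1}$; hence for each $\varepsilon\in(0,1)$ there is $\rho>0$ with $|Q(p-q)|\le\varepsilon|P(p-q)|$ whenever $p,q\in F\cap B(x_{0},\rho)$ (``$F$ is $T$-steep near $x_{0}$''). Fixing $\varepsilon=1/2$ and shrinking $\rho$, by local compactness, so that $F\cap\overline{B(x_{0},\rho)}$ is compact, the map $P$ is injective on $F\cap\overline{B(x_{0},\rho)}$ (if $Pp=Pq$ then $Q(p-q)=0$ too) and displays it as the graph of a $(1/2)$-Lipschitz $\phi\colon K\to T^{\perp}$, with $K:=P(F\cap\overline{B(x_{0},\rho)})$ compact and $\phi(x_{0})=0$ (take $Px_{0}=0$). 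Steepness also makes $P|_{T_{y}}$ injective, so $\dim T_{y}\le m$, for $y$ near $x_{0}$; with $\Li_{y\to x_{0}}T_{y}=T$ this upgrades to $\dim T_{y}=m$ and $P|_{T_{y}}$ an isomorphism onto $T$ for all $y$ near $x_{0}$.

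The decisive step, which I expect to be the main obstacle, is that \emph{$K$ is a neighbourhood of $Px_{0}$ in $T$} — so that $F$ is locally a graph over an open set, not over a pathological domain. I would prove this by a contingent-cone version of invariance of domain. If $Px_{0}\notin\mathrm{int}_{T}K$, pick $a_{k}\to Px_{0}$ in $T\setminus K$ and let $b_{k}\in K$ be a point of $K$ nearest to $a_{k}$; then $b_{k}\to Px_{0}$, the open ball $B_{T}(a_{k},|a_{k}-b_{k}|)$ misses $K$, and consequently $\Tan(K,b_{k})\subseteq\{w\in T:\langle w,a_{k}-b_{k}\rangle\le 0\}$, a proper half-space of $T$. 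On the other hand $y_{k}:=(b_{k},\phi(b_{k}))\in F$ tends to $x_{0}$, near $y_{k}$ the set $F$ coincides with the $\phi$-graph over $K$, and $P$ is linear, whence a short computation gives $P\bigl(\Tan(F,y_{k})\bigr)=\Tan(K,b_{k})$; since $\Tan(F,y_{k})=T_{y_{k}}$ and $P|_{T_{y_{k}}}$ is onto $T$, this makes $\Tan(K,b_{k})=T$, contradicting the half-space bound. So $K\supseteq B_{T}(Px_{0},\delta_{0})$ for some $\delta_{0}>0$, and since $\phi$ is $(1/2)$-Lipschitz with $\phi(x_{0})=0$, the set $V:=\{(x,\phi(x)):|x-Px_{0}|<\delta\}$, $\delta:=\min\{\delta_{0},2\rho/\sqrt5\}$, is an open neighbourhood of $x_{0}$ in $F$ on which $P$ restricts to a homeomorphism onto a ball of $T$; thus $F$ is, near $x_{0}$, a Lipschitz graph of dimension $m$.

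It then remains to promote ``Lipschitz'' to ``$C^{1}$''. For $x$ in that ball and $y=(x,\phi(x))$, the subspace $T_{y}$ is transverse to $T^{\perp}$, hence equals the graph of the linear map $L_{x}:=Q\circ(P|_{T_{y}})^{-1}\colon T\to T^{\perp}$, and $x\mapsto L_{x}$ is continuous because $y\mapsto T_{y}$ is. One checks $D\phi(x)=L_{x}$: otherwise there are $c>0$ and $x_{k}\to x$ with $|\phi(x_{k})-\phi(x)-L_{x}(x_{k}-x)|\ge c\,|x_{k}-x|$; normalizing the secants $(x_{k}-x,\phi(x_{k})-\phi(x))$ (whose norms are comparable to $|x_{k}-x|$, $\phi$ being Lipschitz) and passing to a subsequential limit $u\in\Tan(F,y)=T_{y}=\graph(L_{x})$, the relation $Qu=L_{x}(Pu)$ contradicts the lower bound. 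Hence $\phi\in C^{1}$ and $F$ is a $C^{1}$-manifold near $x_{0}$; as $x_{0}$ was arbitrary, $F$ is a $C^{1}$-manifold. For the converse: a $C^{1}$-manifold is locally compact and locally a $C^{1}$-graph, so $\Tan(F,y)$ is the ordinary tangent space $T_{y}F$ for every $y$ and $y\mapsto T_{y}F$ is continuous; therefore $\pTan^{+}(F,x)=\Ls_{y\to x}T_{y}F=T_{x}F=\Li_{y\to x}T_{y}F=\pTan^{-}(F,x)$.
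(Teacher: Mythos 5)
Your argument is correct, but it takes a genuinely different route from the paper's. Both proofs start from the same foundations: convexity of $\pTan^{-}$ (Clarke, \eqref{coll3}), bilaterality of $\pTan^{+}$ (\eqref{coll1}), Cornet's identity $\pTan^{-}(F,x)=\Li_{F\ni y\to x}\Tan^{+}(F,y)$ for locally compact sets (\eqref{coll4}), and the compactness of normalized secants; so both begin by making the common cone $T_{x}$ a vector space that varies continuously. From there the paper argues by induction on the ambient dimension $n$: it picks a single direction $e_{n}\notin\pTan^{+}(F,\hat x)$, uses Bouligand's projection lemma \eqref{coll7} to write $F$ locally as a Lipschitz graph of a \emph{scalar} function $\varphi$ over a possibly wild set $A\subset\reals^{n-1}$, upgrades $\varphi$ to strictly differentiable via the Severi--Guareschi characterization (Corollary \ref{corosdiff}), transfers the two cone hypotheses down to $A$ (Lemmas \ref{corosdiffsequel}--\ref{lemUno-strict}), applies the inductive hypothesis, and lifts back with Lemma \ref{lemDue-strict}; the extreme cases at the bottom are settled by Cassina's criterion \eqref{coll6} and Rockafellar's criterion \eqref{coll5rock}. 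You instead project in one shot onto the full tangent space $T_{x_{0}}$, obtaining a Lipschitz graph over $K=P(F\cap\overline{\B}_{\rho}(x_{0}))$, and must then supply the one ingredient the induction sidesteps: that $K$ is a neighbourhood of $Px_{0}$ in $T$. Your nearest-point/supporting-half-space argument for this, driven by the surjectivity of $P|_{T_{y_{k}}}$, is sound --- it is in essence the paper's footnote proof of \eqref{coll8} (and of Rockafellar's \eqref{coll5rock}) transplanted to the projected set --- and your final secant-limit argument that $D\phi=L_{x}$ with $x\mapsto L_{x}$ continuous replaces the paper's strict-differentiability machinery, legitimately, because your domain is by then open. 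Your route buys a proof with no induction and no need to manipulate cone conditions on non-open domains; the paper's route buys modularity, since its intermediate lemmas also feed the Valiron, Severi, Gluck and Tierno characterizations. One cosmetic point: convexity of $\pTan^{-}$ does not follow ``because it coincides with $\Li_{y\to x}\Tan(F,y)$'' (a lower limit of cones need not be convex); it is proved by the direct diagonal argument from the sequential definition \eqref{clarkeseq} --- but the fact itself is of course the standard one you meant to invoke, and nothing downstream depends on the parenthetical justification.
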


This theorem entails numerous other existing characterizations,  
as well as \textsc{von Neumann}'s theorem (1929) that a
locally compact subgroup of the general linear group is a smooth manifold.

The \emph{upper paratangent cone }$\mathrm{pTan}^{+}(F,x)$ (introduced by 
\textsc{Severi} and \textsc{Bouligand} in 1928) and \emph{lower tangent cone}
$\mathrm{pTan}^{-}(F,x)$ (introduced by \textsc{Clarke} in 1973) are defined
respectively as the upper and the lower limits of the homothetic relation
\begin{equation*}
\tfrac{1}{\lambda}(F-y)
\end{equation*}
as $\lambda$ tends to $0$ and $y$ tends to $x$ within $F.$ They are closely
related to the \emph{upper tangent cone }$\mathrm{Tan}^{+}(F,x)$ and the 
\emph{lower tangent cone} $\mathrm{Tan}^{-}(F,x)$, defined by \textsc{Peano}
in 1887, as the upper and the lower limits of $\tfrac{1}{\lambda}(F-x)$ as $\lambda$
tends to $0.$ In general,
\begin{equation*}
\mathrm{pTan}^{-}(F,x)\subset \mathrm{Tan}^{-}(F,x)\subset \mathrm{Tan}
^{+}(F,x)\subset \mathrm{pTan}^{+}(F,x),
\end{equation*}
so that the condition of our theorem amounts to the coincidence of all these
cones.

A secondary aim  of this paper is to retrace historical information 
by direct references to mathematical papers where notions and properties
first occured to the best of our knowledge. As a consequence to this
historical concern, some geometrical characterizations of $C^{1}$-manifold by tangent cones  implement  conditions and properties
recovered from  forgotten mathematical papers of \textsc{Valiron} and 
\textsc{Severi}.

Section \ref{sec-tan-paratan}: \textbf{Tangency and paratangency}.
Investigation of $C^{1}$-manifolds involve four tangent cones approximating, already mentioned.
The upper and lower tangent cones were introduced by \textsc{Peano} 
to ground tangency on a firm basis and to establish optimality necessary
conditions; as today modern habit, \textsc{Peano} defined upper and lower
tangent cones as limits of sets. The upper tangent cone, which was recovered
41 years later by \textsc{Severi} and \textsc{Bouligand} in 1928, is known as 
\textsc{Bouligand}'s contingent cone.

Section \ref{sec-tan-paratan-diff}: \textbf{Tangency and paratangency in
traditional sense compared with differentiability}. Characterizations of
both differentiability (called today {Fréchet differentiability}) and strict
differentiability of functions on arbitrary sets (not necessary open, as a
today habit) are stated and, with a pedagogical intent, proved. They are due
essentially to \textsc{Guareschi} and \textsc{Severi}. The modern definition
of differentiability of vector functions is due to \textsc{Grassmann}
(1862), although there is a slighter imperfection. This imperfection was
corrected by \textsc{Peano} in 1887 (for scalar functions) and in 1908 (for
vector functions). The notion of strict differentiability was introduced by 
\textsc{Peano} (1892) for real functions of one real variable and by \textsc{%
Severi} (1934) for several variables.

Section \ref{sec-grassmann}: \textbf{Grassmann Exterior algebra, limits of
vector spaces and angles between vector spaces}. Following \textsc{Peano}'s 
\emph{Applicazioni geometriche} (1887), limits of sets and exterior algebra
of Grassmann are used to defined convergence of vector spaces. Exterior
algebra of Grassmann is used to associate multi-vectors to vector spaces
and, consequently, to define the notion of angle between vector spaces of
same dimension and, finally, to express convergence of vector spaces by
their angle.
In 1888 \textsc{Peano} revisited exterior algebra of Grassmann in \emph{%
Calcolo Geometrico secondo l'Ausdehnungslehre of Grassmann} and here he
introduced the terms of bi-vector, tri-vector and, more important, the
modern notion of vector space. 

Section \ref{sec-main}: \textbf{Four-cones Coincidence Theorem: local and global version}
 We state and prove our main theorem: the four-cone theorem. As its
straightforward application, we show a celebrated \textsc{von Neumann}'s
result (1929) that a locally compact subgroup of the general linear group is
a smooth manifold.
 Moreover, some corollaries of the main theorem will provide efficacious test for visual reconnaissance of $C^{1}$-manifolds.

Appendix \ref{app-von}: \textbf{Von Neumann and alternative definitions of lower tangent
cones}. We comment on \textsc{von Neumann}'s definition of Lie algebra in
his famous paper \cite[(1929)]{vonN} on matrix Lie groups, and present
alternative definitions of the lower tangent and lower paratangent cones for
encompassing \textsc{von Neumann}'s tangent vectors.

Appendix \ref{app-history}: \textbf{From Fréchet problem to modern characterizations of smooth manifold.} In 1925 \textsc{Fréchet} inquires
into existence of non-singular continuously differentiable parametric
representations of continuous curves. This problem had been a starting,
motivating and reference point for subsequent research 
by various mathematicians.   
Two basic conditions for the existence of a non-singular parametrization of
a set (either curve or surface) were given by \textsc{Valiron}: 

$\begin{array}{ll}
(*)&\text{continuously turning tangent space,  and}\\

 (**) &\text{locally injective  orthogonal projections on tangent spaces.}\\
\end{array}$

\noindent Other conditions ensuring $(*)$ and $(**)$
were given by $\textsc{Severi}$ by means of paratangency instead of
tangency. $\textsc{Valiron}$ (1926, 1927) and $\textsc{Severi}$ (1929,
1934) present, in the setting of the topological manifolds, first
geometrical characterizations of $C^{1}$-manifold by tangent cones.
From a historical point of view, an essential condition to a complete geometrical characterization of $C^{1}$%
-manifold by tangent cones, has been a solid and univocal (but not
necessary unique) definition of tangency and a $C^{1}$ version of
differentiability, the so-called strict differentiability. Finally, old  characterizations of smooth manifold (by tangent cones), due to \textsc{Valiron} (1926, 1927) and \textsc{Severi} (1929, 1934)  are recovered; modern characterizations, due to \textsc{Gluck} (1966, 1968),   \textsc{Tierno} (1997) and  \textsc{Shchepin} and \textsc{Repov\v{s}} (2000) are restated.

\begin{rem*}\emph{
 In the following sections, the symbols $\mathbb{R}$ and $\mathbb{N}$ will denote the
real and natural numbers, respectively; and $\mathbb{N}_{1}:=\{m\in\mathbb{N%
}:n\ge1\}$, $\mathbb{R}_{+}:=\{x\in\mathbb{R}: x\ge 0\}$, $\mathbb{R}%
_{++}:=\{x\in\mathbb{R}:x>0\}$. If not otherwise specified, any set will be
a subset of some finite dimensional Euclidean space $\mathbb{R}^{n}$. An open (resp. closed) ball
of center $\hat x$ and ray $\varepsilon$ will be denoted by $%
\B_{\varepsilon}(x)$ (resp. ${\overline \B}_{\varepsilon}(x)$). ${\mathbb{P}}(\mathbb{R}^{n})$ denotes the set of all
subsets of $\mathbb{R}^{n}$. The set of accumulation points of a given set $A$
and its interior are denoted by $\mathrm{der}(A)$ and $\mathrm{int}(A)$,
respectively.
}
\end{rem*}

\section{Tangency and paratangency}\label{sec-tan-paratan}

Let $F$ be an arbitrary subset of Euclidean space $\reals^{n}$ and let $x\in \reals^{n}$.  We will  consider four types of tangent cone to $F$ at $x$:  the \emph{lower}  and the \emph{upper tangent cones}
 \[\Tan^{-}(F,x),\qquad \Tan^{+}(F,x)
 \]
 respectively; and  
the \emph{lower} and the \emph{upper paratangent cones}
\[\pTan^{-}(F,x),\qquad \pTan^{+}(F,x)
 \]
respectively. All of them are cones\,\footnote{In the sequel, a set $A\subset\reals^{n}$ is said to be a \emph{cone}, if $\lambda v\in A$ for every $v\in A$ and  each $\lambda\in\reals_{+}$.} of $\reals^{n}$. They   satisfy the following set inclusions:
\begin{equation}
\pTan^{-}(F,x)\subset \Tan^{-}(F,x)%
\subset \Tan^{+}(F,x)\subset \pTan^{+}(F,x).  \label{4cones}
\end{equation}
The elements of $\pTan^{-}(F,x)$ (resp. $\Tan^{-}(F,x)$, $\Tan^{+}(F,x)$,  $\pTan^{+}(F,x)$) are referred to as \emph{lower paratangent} (resp. \emph{lower tangent}, \emph{upper tangent}, \emph{upper paratangent}) \emph{vectors to $F$ at $x$}.

In order to define them as   lower or upper limits of homothetic sets,  let us introduce two types of limits of sets (the so-called \emph{Kuratowski limits}). Let $A_{\lambda}$ be a subset of $\reals^{n}$ for every real number $\lambda>0$. The \emph{lower limit} $\Li_{\lambda\to0^{+}}A_{\lambda}$ and \emph{upper limit}
$\Ls_{\lambda\to0^{+}}A_{\lambda}$ are defined by \footnote{The limits of sets \eqref{lowerlimit} and \eqref{upperlimit} were introduced by 
\textsc{Peano}: the lower limit in \emph{Applicazioni geometriche} \cite[(1887),  p.\,302]{peano87} and the upper limit 
in \emph{Lezioni di analisi infinitesimale} \cite[(1893), volume 2, p.\,187]{peano1893}
(see \textsc{Dolecki}, \textsc{Greco} \cite[(2007)]{DG-Peano} for further historical details).}
\begin{eqnarray}
 \Li_{\lambda\rightarrow 0^{+}}A_{\lambda}:=\{v\in 
\mathbb{R}^{n}:\lim_{\lambda\rightarrow 0^{+}}\dist(v,A_{\lambda})=0\},  \label{lowerlimit} \\
\Ls_{\lambda\rightarrow 0^{+}}A_{\lambda}:=\{v\in \mathbb{R}^{n}:\liminf_{\lambda%
\rightarrow 0^{+}}\dist(v,A_{\lambda})=0\},  \label{upperlimit}
\end{eqnarray}%
where define $\dist(x,A):=\inf\{\|x-a\|:a\in A\}$ for every $x\in\reals^{n}$ and $A\subset\reals^{n}$. 

Obviously $\Li_{\lambda\rightarrow 0^{+}}A_{\lambda}\subset\Ls_{\lambda\rightarrow 0^{+}}A_{\lambda}$. They can be characterized in terms of sequences:
\begin{eqnarray}\qquad
 v\in\Li_{\lambda\rightarrow 0^{+}}A_{\lambda}\Longleftrightarrow \begin{cases}\forall \{\lambda_{m}\}_{m}\subset\reals_{++} \text{ with }\lambda_{m}\to0^{+}, \exists\,\{a_{m}\}_{m} \text{ with }\cr
a_{m}\in A_{\lambda_{m}} \text{ eventually, } \text{ such that }\lim_{m}a_{m}=v\end{cases} \label{lowerlimitseq} 
   \\
\qquad
 v\in\Ls_{\lambda\rightarrow 0^{+}}A_{\lambda}\Longleftrightarrow \begin{cases}\exists \{\lambda_{m}\}_{m}\subset\reals_{++} \text{ with }\lambda_{m}\to0^{+}, \exists\,\{a_{m}\}_{m} \text{ with }\cr
a_{m}\in A_{\lambda_{m}} \text{ eventually, } \text{ such that }\lim_{m}a_{m}=v.\end{cases}\label{upperlimitseq} 
\end{eqnarray}%

 The \emph{lower} and \emph{upper tangent cones} $\Tan^{-}(F,x)$ and $\Tan^{+}(F,x)$ are defined, respectively, by the following blow-up \footnote{The affine variants of the lower and upper tangent cones  \eqref{lower} and \eqref{upper} were introduced by \textsc{Peano}: the lower tangent cone in \emph{Applicazioni geometriche} \cite[(1887),  p.\,305]{peano87} and the upper tangent cone in \emph{Applicazioni geometriche} \cite[(1887) n.\,11  p.\,143-144]{peano87}  (implicitly) and in \emph{Formulario Mathematico} \cite[(1903) p.\,296]{formulaire4} (explicitly).  See \textsc{Dolecki}, \textsc{Greco} \cite[(2007)]{DG-Peano} for
further historical details. }
 \begin{eqnarray}
\Tan^{-}(F,x):= \Li_{\lambda\rightarrow 0^{+}}%
\tfrac{1}{\lambda}\left( F-x\right) ,  \label{lower} \\
\Tan^{+}(F,x):= \Ls_{\lambda\rightarrow 0^{+}}\tfrac{1%
}{\lambda}\left( F-x\right) .  \label{upper}
\end{eqnarray}%
 
Since $\dist(v,\frac{1}{\lambda}(F-x))=\frac{1}{\lambda}%
\dist(x+\lambda v,F)$,  it follows from \eqref{lowerlimit} and \eqref{upperlimit} that 
\begin{eqnarray}
v\in \Tan^{-}(F,x)& \iff&\lim_{\lambda\rightarrow 0^{+}}\frac{1}{\lambda}\dist(x+\lambda v,F)=0,\label{lowerdist}\\
v\in \Tan^{+}(F,x)& \iff&\liminf_{\lambda\rightarrow 0^{+}}\frac{1}{%
\lambda}\dist(x+\lambda v,F)=0.\label{upperdist}
\end{eqnarray}%

Therefore, in terms of sequences, from  \eqref{lowerlimitseq} and \eqref{upperlimitseq} \footnote{A new sequential definition  of the lower tangent cone was introduced in \textsc{Dolecki}, \textsc{Greco} \cite[(2007)]{DG-Peano}; see Appendix \ref{app-von}.}
\begin{eqnarray}\qquad
v\in \Tan^{-}(F,x)& \iff&\begin{cases}\forall \{\lambda_{m}\}_{m}\subset\reals_{++} \text{ with } \lambda_{m}\to0^{+},
 \\ 
\exists \{x_{m}\}_{m}\subset F\text{ such that }\lim_{m}\frac{x_{m}-x}{\lambda_{m}}=v,\end{cases}\label{lowerseq}\\
\qquad v\in \Tan^{+}(F,x)& \iff&\begin{cases}\exists \{\lambda_{m}\}_{m}\subset\reals_{++} \text{ with } \lambda_{m}\to0^{+},
 \\ 
\exists \{x_{m}\}_{m}\subset F\text{ such that }\lim_{m}\frac{x_{m}-x}{\lambda_{m}}=v.\end{cases}\label{upperseq}
\end{eqnarray}%

Generally, the lower and upper tangent cones are denominated \emph{%
adjacent} and (Bouligand) \emph{contingent} cones, respectively.  \footnote{See \textsc{Aubin} and \textsc{Frankowska} \cite{aubin-frank}, \textsc{Rockafellar} and \textsc{Wets} \cite{rock-wets}, \textsc{Murdokhovich} \cite{mordu})}

The \emph{lower} and  \emph{upper paratangent cones} $\pTan^{-}(F,x)$ and $\pTan^{+}(F,x)$ are defined, respectively, by the following blow-up
\begin{eqnarray}
\pTan^{-}(F,x):=\Li_{\lambda\rightarrow
0^{+}\atop F\ni y\rightarrow x}\tfrac{1}{\lambda}\left( F-y\right),  \label{clarke}\\
\pTan^{+}(F,x):=\Ls_{\lambda\rightarrow
0^{+}\atop F\ni y\rightarrow x}\tfrac{1}{\lambda}\left( F-y\right).  \label{para}
\end{eqnarray}%

 According to \eqref{lowerlimit} and \eqref{upperlimit}, we have
\begin{eqnarray}
v\in \pTan^{-}(F,x)&\iff&\lim_{\lambda\rightarrow 0^{+}\atop F\ni y\rightarrow x }\frac{1}{\lambda}\dist(y+\lambda v,F)=0,\\
v\in \pTan^{+}(F,x)&\iff&\liminf_{\lambda\rightarrow 0^{+}\atop F\ni y\rightarrow x}\frac{1}{%
\lambda}\dist(y+\lambda v,F)=0.
\end{eqnarray}%

Therefore, in terms of sequences, from  \eqref{lowerlimitseq} and \eqref{upperlimitseq}
\begin{eqnarray}
\qquad v\in \pTan^{-}(F,x)\Longleftrightarrow\begin{cases}\forall \{\lambda_{m}\}_{m}\subset\reals_{++} \text{ with } \lambda_{m}\to0^{+},\\
\forall \{y_{m}\}_{m}\subset F  
\text{ with }y_{m}\to x,\\ \exists\{x_{m}\}_{m}\subset F\text{ such that }\lim_{m}\frac{x_{m}-y_{m}}{\lambda_{m}}=v,\end{cases}\label{clarkeseq}\\
\qquad v\in \pTan^{+}(F,x)\Longleftrightarrow\begin{cases}\exists \{\lambda_{m}\}_{m}\subset\reals_{++}\text{ with }\lambda_{m}\to0^{+},\\\exists \{y_{m}\}_{m}\subset F
\text{ with }y_{m}\to x,\\\exists \{x_{m}\}_{m}\subset F
\text{ such that }\lim_{m}\frac{x_{m}-y_{m}}{\lambda_{m}}=v.\end{cases}\label{paraseq}
\end{eqnarray}

Generally, the upper and lower paratangent cone are called \emph{paratingent
cone} \footnote{The upper paratangent cone was introduced as a set of straight-lines by \textsc{Severi} \cite[(1928) p.\,149]{Severi-conf-RM}
and \textsc{Bouligand} in \cite[(1928) pp.\,29-30]{boul_BSMF}; see \textsc{Dolecki}, \textsc{Greco} \cite[2011]{DG-Peano2} for
further historical  details.} and \emph{Clarke tangent cone}, respectively. \footnote{The lower paratangent cone  was introduced in 1973 by  \textsc{Clarke} \cite[(1975)]%
{clarke75} and redefined in terms of sequences by \textsc{Thibault} \cite[(1976), p.\,1303]{thibault},  \textsc{Hiriart-Urruty} \cite[(1977), p.\,1381]{hiriartCRAS};  in Appendix \ref{app-von} a new sequential definition is given. The upper and lower paratangent cones were expressed  by  blow-up in \textsc{Dolecki} \cite[(1882)]{tang-diff}. }

  A  sequence $\{x_{m}\}_{m}\subset \reals^{n}$ converging to a point $x$ is called  a \emph{tangential sequence}, if there exist an infinitesimal sequence $\{\lambda_{m}\}_{m}\subset\reals_{++}$ and a \emph{non-null} vector $v$ such that $\lim_{m\to\infty}\frac{x_{m}-x}{\lambda_{m}}=v$. Moreover, a couple of  sequences $\{x_{m}\}_{m}, \{y_{m}\}_{m}\subset \reals^{n}$ converging to a same point $x$ is called  a \emph{paratangential couple}, if there exist an infinitesimal sequence $\{\lambda_{m}\}_{m}\subset\reals_{++}$ and a \emph{non-null} vector $v$ such that $\lim_{m\to\infty}\frac{x_{m}-y_{m}}{\lambda_{m}}=v$. 
The upper tangent and paratangent cones share  basic compactness properties which are essential to prove several propositions in next Section. These compactness properties are expressed in terms of sequences: \footnote{Property \eqref{comp1} was first used by \textsc{Cassina} in \cite[1930]{Cassina-Milano} to show the existence of a non-null upper tangent vector to a set at an accumulation point. Properties \eqref{comp1} and \eqref{comp2} were frequently and freely used  by \textsc{Severi} and \textsc{Bouligand} in their works; for example, they are stated in \textsc{Severi} \cite[(1931), p.\,342]{Severi-Krak}. 
}
\begin{enumerate}
\item\labelpag{comp1} every  convergent sequence with infinitely many distinct terms, admits a tangential subsequence;
\item \labelpag{comp2} every proper \footnote{We say that a couple of convergent sequences $\{x_{m}\}_{m}, \{y_{m}\}_{m}\subset \reals^{n}$ is \emph{proper} if $x_{m}\ne y_{m}$ for infinitely many $m$.} couple of convergent sequences to a same point   admits a paratangential couple of subsequences. \end{enumerate}

Analogous compactness properties for both lower tangent and lower paratangent cones do not hold. For example, for $S:= \{\frac{1}{m!}: m\in\naturals, m\ge 1\}$  one has $\pTan^{-}(S,0)=\Tan^{-}(S,0)=\{0\}$ (see Example \ref{appEx1}); therefore the sequence $\{\frac{1}{m!}\}_{m}$ does not admit subsequences having non-null lower tangent vectors.

In the following proposition we collect well-known properties on tangent cones, which are used in subsequent proofs in   Section \ref{sec-main}.
\begin{proposition} Let $S\subset\reals^{n}$ be non-empty and $\hat x\in S$. Then the following properties hold.
\begin{enumerate}
\item\labelpag{coll1} The upper paratangent cone is bilateral, i.e.,
\[\pTan^{+}(S,\hat x)=-\pTan^{+}(S,\hat x).
\]
\item\labelpag{coll2} \emph{(\textsc{Bouligand} \cite[(1932), p.\,75]{bouligand})} The upper paratangent cone is upper semicontinuous, i.e.,
\[\Ls_{S\ni x\to\hat x}\pTan^{+}(S,x)\subset \pTan^{+}(S,\hat x).
\]
\item\labelpag{coll3} \emph{(\textsc{Clarke} \cite[(1975)]{clarke75})} The lower paratangent cone is convex, i.e.,
\[\pTan^{-}(S,\hat x)\text{ is convex}.
\]

\item\labelpag{coll4}  \emph{(\textsc{Cornet} \cite[(1981)]{cornetT}, \cite[(1981)]{cornet}  for  closed sets)} 
 If $S$ is  locally compact at $\hat x$ \footnote{$S$ is said to be  \emph{locally compact at $\hat x$}, whenever there exists a compact neighborhood of $\hat x$ in $S$.}, then \[\pTan^{-}(S,\hat x)=\Li_{S\ni x\to \hat x}\Tan^{+}(S,x).\]

\item\labelpag{coll5} \emph{(\textsc{Furi} \cite[(1995), p.\,96]{furi})} $S$ is open if and only if  
\[S\text{ is locally compact and }\Tan^{+}(S,x)=\reals^{n}\text{ for all }x\in S.\]

\item\labelpag{coll5rock} \emph{(\textsc{Rockafellar} \cite[(1979), p.\,149]{rock79}  for  closed sets)} If $S$ is locally compact at $\hat x$, then 
 \[\hat x\in \intt(S) \iff\pTan^{-}(S,\hat x)=\reals^{n}.\]

\item\labelpag{coll6} \emph{(\textsc{Cassina} \cite[(1930)]{Cassina-Milano})} The point $\hat x$ is an accumulation point of $S$ if and only if 
\[\Tan^{+}(S,\hat x)\text{ contains non-null vectors}.\]

\item\labelpag{coll7ante} \emph{(\textsc{Bouligand} \cite[(1928) p.\,33]{boul_BSMF}, \cite[(1932) p.\,76-79]{bouligand})}) The orthogonal projection onto the linear hull of $\pTan^{+}(S,\hat x)$ is injective on a neighborhood of $\hat x$ in $S$. More generally, if $V$ and $W$  are vector spaces  such that $V\cap\pTan^{+}(S,\hat x)=\{0\}$ and $\reals^{n}=V\oplus W$, then there is $\varepsilon\in\reals_{++}$ such that the projection along $V$ onto $W$  is injective on $S\cap \B_{\varepsilon}(\hat x)$.

\item\labelpag{coll7} \emph{(\textsc{Bouligand} \cite[(1928) p.\,33]{boul_BSMF}, 
  \cite[(1932) p.\,76-79]{bouligand})} 
  Let $p:\reals^{n}\to\reals^{n-1}$ be a projection given by 
\[
p(x_{1},\dots,x_{n-1},x_{n}):=(x_{1},\dots,x_{n-1}).
\]
If $e_{n}:=(0,\dots,0,1)\not\in\pTan^{+}(S,\hat x)$, then there exists an open ball $\B_{\varepsilon}(\hat x)$ such that $p$ is injective on $S\cap\B_{\varepsilon}(\hat x)$ and, 
 moreover, defined  $\varphi: p(S\cap\B_{\varepsilon}(\hat x))\to\reals$ by $\varphi(x_{1},\dots,x_{n-1}):=$ ``the real number $x_{n}$ such that $(x_{1},\dots,x_{n-1},x_{n})\in S\cap \B_{\varepsilon}(\hat x)$ and $p(x_{1},\dots,x_{n-1},x_{n}):=(x_{1},\dots,x_{n-1})$'', the following property holds
\begin{equation*}
\quad\quad\quad\quad\quad\quad\quad\quad \varphi \text{ is  Lipschitz  and  }\graph(\varphi)=S\cap \B_{\varepsilon}(\hat x).\quad\quad\quad\quad\quad\quad\Box
\end{equation*}

\end{enumerate}
\end{proposition}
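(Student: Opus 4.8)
The plan is to obtain all nine properties from the sequential descriptions \eqref{upperseq}, \eqref{clarkeseq}, \eqref{paraseq} of the cones, together with the compactness properties \eqref{comp1} and \eqref{comp2}; only \eqref{coll4}, \eqref{coll5} and \eqref{coll5rock} will call on local compactness. I organize the work by the mechanism each item exploits.

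\emph{Algebraic identities via a moving base point: \eqref{coll1}, \eqref{coll3}.} For bilaterality, if $v\in\pTan^+(S,\hat x)$ is witnessed by $\lambda_m\to0^+$, $S\ni y_m\to\hat x$ and $x_m\in S$ with $(x_m-y_m)/\lambda_m\to v$, then $x_m=y_m+\lambda_m\,(x_m-y_m)/\lambda_m\to\hat x$, so exchanging the roles of $x_m$ and $y_m$ exhibits $-v$ as a paratangent vector; hence $\pTan^+(S,\hat x)=-\pTan^+(S,\hat x)$. For convexity it suffices, since $\pTan^-$ is a cone, to prove closure under addition: given $u,v\in\pTan^-(S,\hat x)$ and arbitrary $\lambda_m\to0^+$, $S\ni y_m\to\hat x$, the definition \eqref{clarkeseq} first yields $w_m\in S$ with $(w_m-y_m)/\lambda_m\to u$ (so $w_m\to\hat x$), and, applied to the admissible base sequence $\{w_m\}$, next yields $x_m\in S$ with $(x_m-w_m)/\lambda_m\to v$; adding the two quotients gives $(x_m-y_m)/\lambda_m\to u+v$.

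\emph{Limiting arguments: \eqref{coll6}, \eqref{coll2}.} For Cassina's criterion, if $\hat x\in\der(S)$ I take $x_k\in S\setminus\{\hat x\}$ with $x_k\to\hat x$, put $\lambda_k=\|x_k-\hat x\|$, and extract a subsequence along which the unit vectors $(x_k-\hat x)/\lambda_k$ converge to some $v$ with $\|v\|=1$; then $v\in\Tan^+(S,\hat x)\setminus\{0\}$ by \eqref{upperseq}. Conversely a non-null $v\in\Tan^+(S,\hat x)$ gives $x_m\in S$ with $x_m\to\hat x$ and $\|x_m-\hat x\|/\lambda_m\to\|v\|>0$, so $x_m\ne\hat x$ eventually and $\hat x\in\der(S)$. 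For upper semicontinuity, a point $v\in\Ls_{S\ni x\to\hat x}\pTan^+(S,x)$ arrives with $S\ni x_k\to\hat x$ and $v_k\to v$, $v_k\in\pTan^+(S,x_k)$; for each $k$ the definition \eqref{paraseq} lets me select a single triple $(\mu_k,\tilde y_k,\tilde z_k)$ with $\mu_k<1/k$, $\|\tilde y_k-x_k\|<1/k$ and $\|(\tilde z_k-\tilde y_k)/\mu_k-v_k\|<1/k$, and the resulting diagonal sequence certifies $v\in\pTan^+(S,\hat x)$ through \eqref{paraseq}.

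\emph{Projection statements via \eqref{comp2}: \eqref{coll7ante}, \eqref{coll7}.} If the projection along $V$ failed to be injective on every $\B_{1/k}(\hat x)$, I would obtain distinct $a_k,b_k\in S\cap\B_{1/k}(\hat x)$ with $a_k-b_k\in V\setminus\{0\}$; this proper couple tending to $\hat x$ admits, by \eqref{comp2}, a paratangential subcouple $(a_{k_m}-b_{k_m})/\lambda_m\to w\ne0$, whence $w\in\pTan^+(S,\hat x)$ by \eqref{paraseq}, while $w\in V$ since $V$ is a closed subspace; this contradicts $V\cap\pTan^+(S,\hat x)=\{0\}$. Taking $V=L^\perp$, $W=L$ with $L$ the linear hull of $\pTan^+(S,\hat x)$ recovers the orthogonal-projection claim. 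In \eqref{coll7}, injectivity of $p$ is the case $V=\reals e_n$, admissible because $e_n\notin\pTan^+(S,\hat x)$, bilaterality \eqref{coll1} and the cone property give $\reals e_n\cap\pTan^+(S,\hat x)=\{0\}$; this makes $\varphi$ well defined with $\graph(\varphi)=S\cap\B_\varepsilon(\hat x)$. Since $\pTan^+(S,\hat x)$ is a closed cone missing $\pm e_n$, a compactness argument on the unit sphere supplies $L$ with $|v_n|\le L\,\|p(v)\|$ for every $v\in\pTan^+(S,\hat x)$; were $\varphi$ not $(L+1)$-Lipschitz on every $\B_{1/k}(\hat x)$, a proper couple with $|a_{k,n}-b_{k,n}|>(L+1)\|p(a_k-b_k)\|$ would, via \eqref{comp2}, yield a paratangent $w\ne0$ satisfying both $|w_n|\ge(L+1)\|p(w)\|$ and $|w_n|\le L\,\|p(w)\|$, forcing $w=0$.

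\emph{Local-compactness statements: \eqref{coll4}, \eqref{coll5rock}, \eqref{coll5}.} In Cornet's identity the inclusion $\pTan^-(S,\hat x)\subset\Li_{S\ni x\to\hat x}\Tan^+(S,x)$ is elementary: from the distance form of $\pTan^-$, for each $\varepsilon>0$ there is $\delta>0$ with $\dist(x+\lambda v,S)<\varepsilon\lambda$ whenever $x\in S\cap\B_\delta(\hat x)$ and $0<\lambda<\delta$, and a bounded subsequence of the quotients $(w_\lambda-x)/\lambda$ (with $w_\lambda\in S$, $\|w_\lambda-x-\lambda v\|<\varepsilon\lambda$) produces a vector of $\Tan^+(S,x)$ within $\varepsilon$ of $v$. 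The reverse inclusion is the principal obstacle: given $v$ with $\dist(v,\Tan^+(S,x))\to0$ as $S\ni x\to\hat x$ and arbitrary $\lambda_m\to0^+$, $S\ni y_m\to\hat x$, I intend to build $z_m\in S$ with $(z_m-y_m)/\lambda_m\to v$ by an Euler-type tracking argument, taking many short steps from $y_m$ in directions drawn from the nearby contingent cones (each close to $v$) and using local compactness to keep the polygon inside $S$ and its endpoint within $o(\lambda_m)$ of $y_m+\lambda_m v$. With \eqref{coll4} in hand, \eqref{coll5rock} follows: the implication $\hat x\in\intt(S)\Rightarrow\pTan^-(S,\hat x)=\reals^n$ is immediate from the distance definition, while if $\hat x\notin\intt(S)$ I produce $q_k\notin S$, $q_k\to\hat x$, with nearest points $s_k\in S$ (local compactness) and unit proximal normals $\nu_k$, so that the standard proximal-normal inequality gives $\Tan^+(S,s_k)\subset\{w:\langle\nu_k,w\rangle\le0\}$; passing to $\nu_k\to\nu$ yields $\dist(\nu,\Tan^+(S,s_k))\to1$, contradicting $\nu\in\reals^n=\Li_{S\ni x\to\hat x}\Tan^+(S,x)$. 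The same proximal-normal mechanism proves Furi's characterization \eqref{coll5}: if $S$ is open it is locally compact with $\Tan^+(S,x)=\reals^n$ everywhere, and conversely a point of $S\setminus\intt(S)$ would furnish $s_k\in S$ with $\Tan^+(S,s_k)$ trapped in a half-space, against $\Tan^+(S,s_k)=\reals^n$. I expect the tracking construction for the nontrivial inclusion of \eqref{coll4} to be the main technical hurdle; the remaining items reduce to the sequential definitions and \eqref{comp1}--\eqref{comp2}.
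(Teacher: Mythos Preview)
The paper does not prove this proposition: it is introduced as a collection of ``well-known properties on tangent cones'' with attributions to the literature, and the $\Box$ placed inside the last displayed line of item \eqref{coll7} marks the end of the statement without any accompanying proof. There is therefore no paper proof to compare against; your proposal supplies what the paper deliberately omits by citation.

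Your arguments are the standard ones and are correct as far as they go. The algebraic items \eqref{coll1}, \eqref{coll3}, the limiting items \eqref{coll6}, \eqref{coll2}, and the projection items \eqref{coll7ante}, \eqref{coll7} all follow from the sequential descriptions and from \eqref{comp1}--\eqref{comp2} exactly as you outline. For \eqref{coll4} you correctly isolate the Aubin--Clarke inclusion $\pTan^{-}\subset\Li\,\Tan^{+}$ as the easier half and the Cornet/Treiman direction $\Li\,\Tan^{+}\subset\pTan^{-}$ as the substantive one, and your proximal-normal derivations of \eqref{coll5rock} and \eqref{coll5} are fine. The one genuine gap is that the ``Euler-type tracking'' construction for the hard inclusion in \eqref{coll4} is stated only as an intention; carrying it out (choosing step sizes, controlling the accumulated error, and invoking local compactness to keep the polygonal path in $S$) is precisely the nontrivial content that the paper outsources to \textsc{Cornet}'s references, so if you want a self-contained proof you must actually execute that step.
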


In addition to previous characterizations \eqref{coll5}, \eqref{coll6} of open sets by tangent cones, we have that 
\begin{enumerate}
\item\labelpag{coll8} \emph{ $S$ is open if and only if   $S$ is locally compact and }
\[\Tan^{+}(S,x)=-\Tan^{+}(S,x)\text{ \emph{and} }\LTan^{+}(S,x)=\reals^{n}\text{ \emph{for all} }x\in S. \footnote{Here and in the sequel, $\LTan^{+}(S,x)$ denotes the linear hull of tangent cone $\Tan^{+}(S,x)$. \emph{Proof of \eqref{coll8}}(only sufficiency).
Suppose by absurd that a point $\hat x\in S$ is not interior to $S$. Then, by local compactness, choose $\varepsilon\in\reals_{++}$ and $\tilde x\not\in S$ such that $\overline \B_{\varepsilon}(\hat x)\cap S$ is closed and $\|\hat x-\tilde x\|<\varepsilon/2$. Now, denote by $p(\tilde x)$ a projection of $\tilde x$ on the closed set $\overline \B_{\varepsilon}(\hat x)\cap S$. Then $0<\|\bar x-p(\bar x)\|<\varepsilon/2$, $p(\tilde x)\in  \B_{\varepsilon}(\hat x)\cap S$ and $\B_{\|\tilde x-p(\tilde x)\|}(\tilde x)\cap \B_{\varepsilon}(\hat x)\cap S=\emptyset$. Hence  the open vector half-space $H^{+}:=\{v\in\reals^{n}: \langle v,\bar x-p(\bar x)\rangle>0\}$ has no elements in common with the upper tangent cone $\Tan^{+}(S, p(\tilde x))$. Since, by the hypothesis, $\Tan^{+}(S,p(\bar x))=-\Tan^{+}(S,p(\bar x))$,  then even the opposite open vector half-space $H^{-}:=\{v\in\reals^{n}: \langle v,\bar x-p(\bar x)\rangle<0\}$ has no elements in common with  $\Tan^{+}(S, p(\tilde x))$. Therefore the vector hyperplane $H:=\{v\in\reals^{n}: \langle v,\bar x-p(\bar x)\rangle=0\}$ includes  $\Tan^{+}(S, p(\tilde x))$; in contradiction of the hypothesis $\Tan^{+}(S, \tilde x)=\reals^{n}$.
}\]
\end{enumerate}

A rich and unstable terminology deals with coincidence conditions: $\Tan^{-}(S,\hat x)=\Tan^{+}(S,\hat x)$ \footnote{If this equality holds, the set $S$  is said to be ``{derivable at $\hat x$}'' in \textsc{Aubin}, \textsc{Frankowska} \cite[(1990), p.\,127]{aubin-frank},  ``{geometrically derivable} at $\hat x$'' in \textsc{Rockafellar}, \textsc{Wets} \cite[(1998), p.\,197-198]{rock-wets}, ``tangent rgular'' in \textsc{Shchepin}, \textsc{Repov\v{s}} \cite[(2000), p.\,2117]{repovs2}. If, in addition, $\Tan^{+}(S,\hat x)$ is a vector space, the set $S$  is said to be  ``{smooth at} $\hat x$'' in \textsc{Rockafellar} \cite[(1985), p.\,173]{rock85}. },  $\pTan^{-}(S,\hat x)=\Tan^{+}(S,\hat x)$ \footnote{If this equality holds, the set $S$  is said to be  ``{tangentially regular} at $\hat x$'' in   \textsc{Aubin}, \textsc{Frankowska} \cite[(1990), p.\,127]{aubin-frank},  ``{regular} at $\hat x$'' in \textsc{Rockafellar}, \textsc{Wets} \cite[(1998), p.\,220]{rock-wets}, and  ``{Clarke regular} at $\hat x$'' in \textsc{Mordukhovich} \cite[(2006), p.\,136]{mordu}.}, $\pTan^{-}(S,\hat x)=\pTan^{+}(S,\hat x)$ \footnote{If this equality holds, the set $S$  is said to be  ``{strictly smooth at} $\hat x$'' in \textsc{Rockafellar} \cite[(1985), p.\,173]{rock85}.}.
Below we present unidimensional examples  of all possible coincidence the various coincidence conditions.
\[
\begin{array}{c}
\includegraphics[scale=0.7]{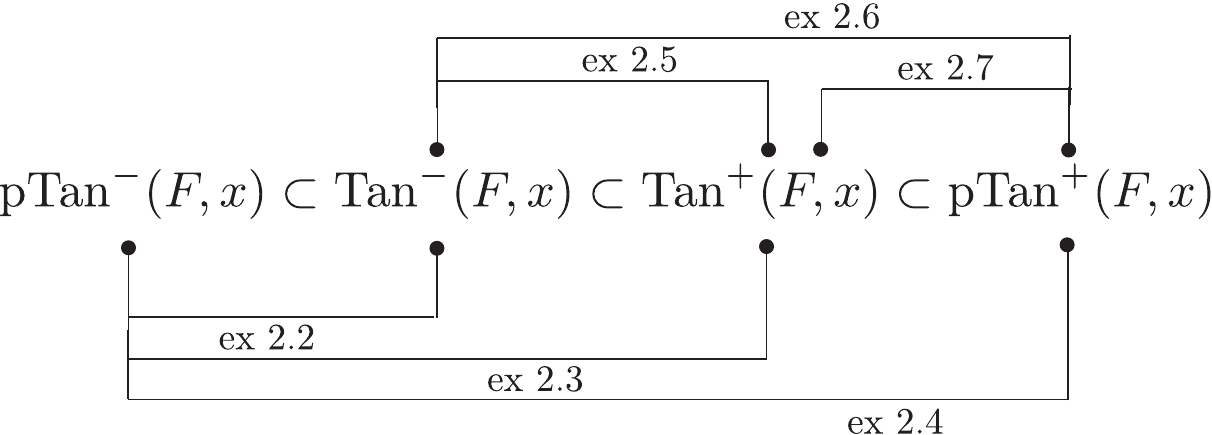}
\end{array}
\]
 \begin{example}
 $F:=\{0\}\cup\{1/m!:m\in \naturals_{1}\}$. 
 \emph{Here} $\pTan^{-}(F,0)=\Tan^{-}(F,0)=\{0\}$, $\Tan^{+}(F,0)=\reals_{+}$, $\pTan^{+}(F,0)=\reals$
 \end{example}
 
  \begin{example}
$F:=\reals_{+}$. \emph{Here} $\pTan^{-}(F,0)=\Tan^{-}(F,0)=\Tan^{+}(F,0)=\reals_{+}$, $\pTan^{+}(F,0)=\reals$.
 \end{example}
 
 \begin{example}
$F:=\{0\}$. \emph{Here} $\pTan^{-}(F,0)=\Tan^{-}(F,0)=\Tan^{+}(F,0)=\pTan^{+}(F,0)=\{0\}$.
 \end{example}

 \begin{example}
$F:=\{0\}\cup\{1/m:m\in\naturals_{1}\}$. \emph{Here} $\pTan^{-}(F,0)=\{0\}$, $\Tan^{-}(F,0)=\Tan^{+}(F,0)=\reals_{+}$, $\pTan^{+}(F,0)=\reals$.
 \end{example}

\begin{example}
  $F:=\{0\}\cup\{1/m:m\in\naturals_{1}\}\cup\{-1/m:m\in\naturals_{1}\}$. \emph{Here} $\pTan^{-}(F,0)=\{0\}$, $\Tan^{-}(F,0)=\Tan^{+}(F,0)=\pTan^{+}(F,0)=\reals$.
 \end{example}

 \begin{example}
 $F:=\{0\}\cup\{1/m!:m\in \naturals_{1}\}\cup\{-1/m:m\in \naturals_{1}\}$. \emph{Here} $\pTan^{-}(F,0)=\{0\}$, $\pTan^{-}(F,0)=-\reals_{+}$, $\Tan^{+}(F,0)=\pTan^{+}(F,0)=\reals$.
 \end{example}

\section{Tangency and paratangency in traditional sense compared with differentiability} \label{sec-tan-paratan-diff}

Traditionally, \emph{intrinsic notions} of tangent straight line to a curve and that of tangent plane to a surface at a point can  be resumed by the following general definition.

\begin{definition}\label{deftangtrad} Let $\hat x$ be an accumulation point of  a subset $F$ of $\reals^{n}$. A vector space $H$ of $\reals^{n}$ is said to be \emph{tangent in traditional sense} to $F$ at $\hat x$ if 
\begin{equation}
\lim\nolimits_{F\ni x\rightarrow \hat{x}}\dfrac{\dist(x,H+\hat x)}{%
\dist\left( x,\hat{x}\right) }=0.  \label{tangtrad}
\end{equation}%

\end{definition}

Since $\frac{\dist(x,H+\hat x)}{\dist\left( x,\hat{x}\right)}$ is the sinus of the angle between $H$ and the vector $x-\hat x$, the geometric meaning of (\ref{tangtrad}) is evident:  the half-line that passes through $\hat{x}$ and $x\in F$ and the affine space $H+\hat x$ form an angle that tends to zero as $x$ tends to $\hat{x}$.

\[
\begin{array}{cc}
\includegraphics[scale=0.5]{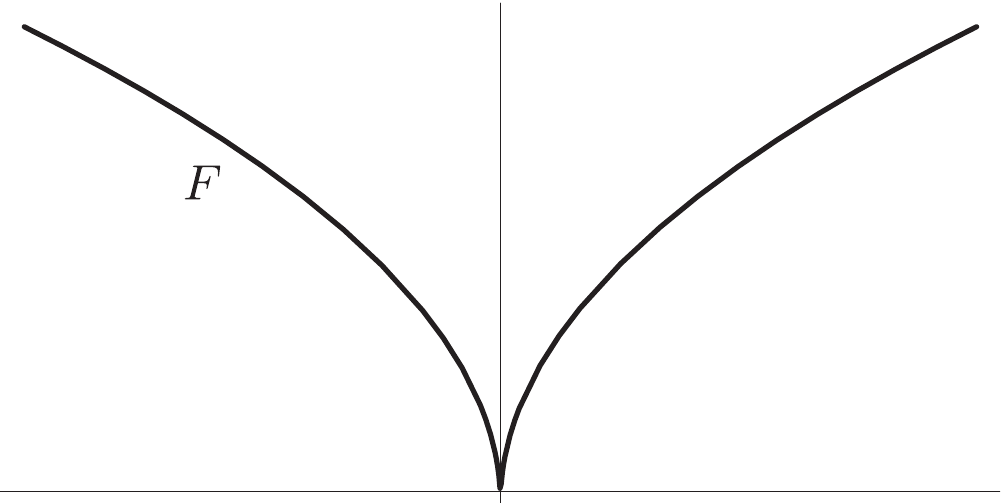}&
\includegraphics[scale=0.5]{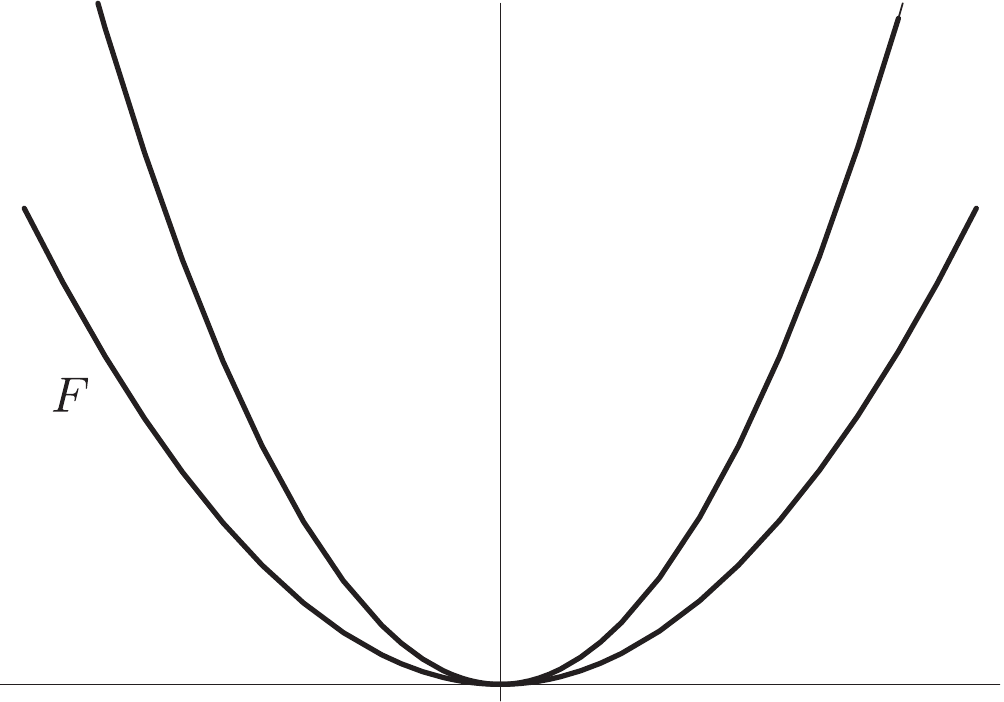}
\cr
\hbox{fig. 1: $F=\{(x,y):y^{3}=x^{2}\}$}&
\hbox{fig. 2:  $F=\{(x,y):(y-x^{2})(y-2x^{2})=0\}$}
\end{array}
\]
The sets $F$  of fig.\,$1$ and $2$ admit everywhere tangent line in traditional sense; in both cases  the tangent lines vary continuously. 

 Analytically Definition \ref{deftangtrad} becomes:
\begin{proposition}\label{proptangtrad-anal} A vector space $H$ of $\reals^{n}$ is   \emph{tangent in traditional sense} to $F$ at an accumulation point $\hat x$ of $F$ if and only if
\begin{equation}
\Tan^{+}(F,\hat{x})\subset H.\,\footnote{Something more, the upper tangent cone $\Tan^{+}(F,\hat{x})$ is the smallest closed set $H$ (not necessary, vector spaces) verifying \eqref{tangtrad}.} \label{tangtrad-anal}
\end{equation}%
\end{proposition}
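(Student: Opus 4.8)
The plan is to read both implications directly off the sequential description \eqref{upperseq} of $\Tan^{+}(F,\hat x)$, using throughout one elementary remark: since $H$ is a linear subspace (in particular a \emph{closed cone}), for every $w\in\reals^{n}$ and every $t>0$ one has
\[
\dist(\hat x+w,\,H+\hat x)=\dist(w,H),\qquad \dist(tw,H)=t\,\dist(w,H),
\]
and $\dist(w,H)=0\iff w\in H$. Hence, writing $x=\hat x+w$ with $w\neq0$, the quotient occurring in \eqref{tangtrad} equals $\dist(w/\|w\|,H)$: it depends only on the \emph{direction} of $x-\hat x$. Everything reduces to comparing directions of increments in $F$ with $H$.

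\emph{Necessity of \eqref{tangtrad-anal}.} Assume $H$ is tangent in the traditional sense and let $v\in\Tan^{+}(F,\hat x)$. If $v=0$ then $v\in H$ trivially, so assume $v\neq0$. By \eqref{upperseq} there are $\lambda_{m}\to0^{+}$ and $x_{m}\in F$ with $(x_{m}-\hat x)/\lambda_{m}\to v$. Then $x_{m}-\hat x=\lambda_{m}\bigl((x_{m}-\hat x)/\lambda_{m}\bigr)\to0$, so $x_{m}\to\hat x$, and since $v\neq0$ we have $x_{m}\neq\hat x$ for all large $m$. Using the scaling identities above,
\[
\frac{\dist(x_{m},H+\hat x)}{\dist(x_{m},\hat x)}
=\frac{\dist(x_{m}-\hat x,\,H)}{\|x_{m}-\hat x\|}
=\frac{\dist\bigl((x_{m}-\hat x)/\lambda_{m},\,H\bigr)}{\bigl\|(x_{m}-\hat x)/\lambda_{m}\bigr\|}
\;\longrightarrow\;\frac{\dist(v,H)}{\|v\|},
\]
since $\dist(\cdot,H)$ and $\|\cdot\|$ are continuous and $(x_{m}-\hat x)/\lambda_{m}\to v\neq0$. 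By \eqref{tangtrad} the left side tends to $0$, hence $\dist(v,H)=0$ and $v\in H$; thus $\Tan^{+}(F,\hat x)\subset H$.

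\emph{Sufficiency of \eqref{tangtrad-anal}.} Assume $\Tan^{+}(F,\hat x)\subset H$ and suppose, for contradiction, that \eqref{tangtrad} fails. Then there are $\delta>0$ and points $x_{m}\in F$ with $x_{m}\neq\hat x$, $x_{m}\to\hat x$, and $\dist(x_{m},H+\hat x)\ge\delta\,\dist(x_{m},\hat x)$ for every $m$. Put $\lambda_{m}:=\|x_{m}-\hat x\|\to0^{+}$ and $v_{m}:=(x_{m}-\hat x)/\lambda_{m}$, a unit vector; by compactness of the unit sphere of $\reals^{n}$, after passing to a subsequence, $v_{m}\to v$ with $\|v\|=1$. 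Since $(x_{m}-\hat x)/\lambda_{m}=v_{m}\to v$ and $\lambda_{m}\to0^{+}$, \eqref{upperseq} gives $v\in\Tan^{+}(F,\hat x)\subset H$, so $\dist(v,H)=0$. But, by the scaling identities,
\[
\delta\le\frac{\dist(x_{m},H+\hat x)}{\dist(x_{m},\hat x)}
=\frac{\dist(\lambda_{m}v_{m},H)}{\lambda_{m}}
=\dist(v_{m},H)\;\longrightarrow\;\dist(v,H)=0,
\]
a contradiction. Hence \eqref{tangtrad} holds, i.e. $H$ is tangent in the traditional sense.

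There is no genuine obstacle here; the only points needing care are the separate treatment of $v=0$ in the necessity part, the extraction of a convergent direction $v_{m}\to v$ via compactness of the sphere in the sufficiency part, and the observation that the homogeneity $\dist(tw,H)=t\,\dist(w,H)$ uses only that $H$ is a cone. This last remark also covers the footnote: the two computations above, performed with an arbitrary closed cone $H$ in place of the subspace, show that a closed cone $H$ satisfies \eqref{tangtrad} iff $\Tan^{+}(F,\hat x)\subset H$; and, since by \eqref{comp1} every sequence $F\ni x_{m}\to\hat x$ with distinct terms has a subsequence whose normalized increments converge to a vector of $\Tan^{+}(F,\hat x)$, the cone $\Tan^{+}(F,\hat x)$ (which is closed, being an upper limit of sets) itself satisfies \eqref{tangtrad}. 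Therefore $\Tan^{+}(F,\hat x)$ is the smallest closed cone verifying \eqref{tangtrad}.
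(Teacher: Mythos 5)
Your proof is correct and follows essentially the same route as the paper's: necessity via the sequential characterization \eqref{upperseq} together with the homogeneity of $\dist(\cdot,H)$, and sufficiency by contradiction, normalizing the increments and extracting a convergent subsequence on the unit sphere. The only (harmless) additions are the explicit treatment of $v=0$ and the remark on the footnote, which you prove for closed cones rather than for arbitrary closed sets as the footnote asserts.
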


According to \eqref{tangtrad-anal},  we assume, as a definition, that every vector space of $\reals^{n}$ is tangent in traditional sense to $F$ at the isolated points of $F$. 

\begin{proof}[\emph{\textbf{Proof}}]\emph{Necessity} of \eqref{tangtrad-anal}. Let $H$ be a vector space, tangent in traditional sense to $F$ at $\hat x$. Fix a non-null vector $v\in \Tan^{+}(F,\hat x)$. There exist  sequences $\{x_{m}\}_{m}\subset F$ and $\{\lambda_{m}\}_{m}\subset \reals_{++}$ such that $\lim_{m}\lambda_{m}= 0$, $\lim_{m}x_{m}=x$ and $\lim_{m}\frac{x_{m}-\hat x}{\lambda_{m}}=v $; hence $\lim_{m}\frac{x_{m}-\hat x}{\|x_{m}-\hat x\|}=\frac{v}{\|v\|} $. Therefore by \eqref{tangtrad} we have $0=\lim_{m}\frac{\dist(x_{m}, H+\hat x)}{\|x_{m}-\hat x\|}=\lim_{m}\dist\left(\frac{x_{m}-\hat x}{\|x_{m}-\hat x\|}, H\right)=\dist\left(\frac{v}{\|v\|},H\right)$. Then $v\in H$.

\emph{Sufficiency} of \eqref{tangtrad-anal}.  Suppose that \eqref{tangtrad} does not hold. Then there is a sequence $\{x_{m}\}_{m}\subset F\setminus\{\hat x\}$ with $\lim_{m}x_{m}=\hat x$ and 
\[
\lim_{m}\dist\left(\frac{x_{m}-\hat x}{\|x_{m}-\hat x\|},H\right)>0.
\leqno(*)\] 
By compactness, $\left\{\frac{x_{m}-\hat x}{\|x_{m}-\hat x\|}\right\}_{m}$  has a subsequence $\left\{\frac{x_{m_{k}}-\hat x}{\|x_{m_{k}}-\hat x\|}\right\}_{k}$ converging to a vector $w$ of norm $1$. Therefore, by \eqref{tangtrad-anal}, $w\in H$; consequently  $ \lim_{k}\dist\left(\frac{x_{m_{k}}-\hat x}{\|x_{m_{k}}-\hat x\|},H\right)= \dist(w,H)=0$, contradicting  $(*)$. 
\end{proof}

Following \textsc{Valiron} \cite[(1926)]{valiron_superficie}, \cite[(1927), p.\,47]{valiron_curva}, a vector space $H$ is said to be \emph{tangent in Valiron sense} to $F$ at $\hat x$, if $H=\Tan^{+}(F,\hat x)$.

Generally,   tangency was regarded as an elementary, intuitive notion not needing a definition. When definitions  however, were written down,  they were  neither precise and general, nor univocal and coherent. 
As a case study, the reader may consider the following  definitions of \textsc{Lagrange} and \textsc{Fréchet}.

\begin{enumerate}

\item\textsc{Lagrange} \cite[(1813), p.\,259]{lagrange1813} writes: 
 \begin{quotation}
 Ainsi, de même qu'une ligne droite peut être tangente d'une courbe, un plan peut être tangent d'une surface, et l'on déterminera le plan tangent par la condition qu'aucun autre plan ne puisse être mené par le point de contact entre celui-là et la surface.
\end{quotation}

\item\labelpag{fre1} \textsc{Fréchet} \cite[(1912), p.\,437]{Frechet_3}, \cite[(1964), p.\,189]{frechet64} writes: 
\begin{quotation}
Pr{\'e}cisons d'abord que nous entendons par plan tangent {\`a} [une surface] $S$ au point $(a,b,c)$ un plan qui soit le lieu des tangentes aux courbes situ{\'e}es sur $S$ et passant par ce point (s'entendant de celles de ces courbes qui ont effectivement une tangente en ce point).
\end{quotation}

\item\labelpag{fre2} \textsc{Fréchet}  \cite[(1964), p.\,193]{frechet64} writes: \begin{quotation}
Un plane $P$ passant par un point $Q$ d'une surface $S$ est, par définition, tangent à $S$ en $Q$ si,

1º) $M$ étant un point quelconque de $S$, distinct de $Q$, l'angle aigu de $M$ avec $P$ tend vers zéro quand $M$ tend vers $Q$,

2º) La condition $W$ ci-dessus [c'est-à-dire, si l'on projette $S$ sur $P$, il y a au moins un voisinage de $Q$ qui appartient entièrement à cette projection] est satisfaite. \footnote{In virtue of  \eqref{upperdist},  the definition \eqref{fre1} implies that \textsc{Fréchet}'s tangent plane to a surface is included in the corresponding upper tangent cone; while, surprisingly, by   Proposition \ref{proptangtrad-anal} the condition $1º)$ of the definition \eqref{fre2} demands the opposite set inclusion. Two examples: let $f,g:\reals^{2}\to\reals$ be defined by
\begin{equation}
f(x,y)=\begin{cases}
x&\text{if }x\in\{\frac{1}{m!}:m\in\naturals_{1}\} \text{ and }y=0\cr
0&\text{otherwise}
\end{cases},\quad 
g(x,y)=\begin{cases}
0&\text{if }x\ge0\cr
1&\text{otherwise.}
\end{cases}
\end{equation}
Observe that the plane $z=0$ is tangent to surface $z=f(x,y)$ at $(0,0,0)$ in the sense of the \textsc{Fréchet}'s definition \eqref{fre1}, but does not fulfill  \textsc{Fréchet}'s definition \eqref{fre2}. Conversely,  the plane $z=0$ is tangent to surface $z=g(x,y)$ at $(0,0,0)$ in the sense of the \textsc{Fréchet}'s definition \eqref{fre2}, but does not fulfill  \textsc{Fréchet}'s definition \eqref{fre1}. A critical attitude towards unstinting Fréchet's definition of tangency is took, for example, by \textsc{Valiron} \cite[(1926), p.\,191-192]{valiron_superficie} and \textsc{Cinquini} \cite[(1955)]{cinquini}.
}
\end{quotation}
\end{enumerate}

Following \textsc{Guareschi} \cite[(1934), p.\,175-176]{Guareschi-diff}, \cite[(1936), p.\,131-132]{Guareschi-Berzolari},
the vector space generated by $\Tan^{+}(F,\hat x)$ is called \emph{linear upper tangent space} of $S$ at $\hat x$; it is denoted by $\LTan^{+}(F,\hat x)$. Proposition \ref{proptangtrad-anal} allows us to describe  the linear upper tangent space as the smallest vector  space which is tangent in traditional sense.

In the case where $F$ is the graph of a function $f$, the tangency in traditional sense becomes differentiability.
\begin{proposition}[{\textsc{Guareschi} \cite[(1934), p.\,181, 183]{Guareschi-diff}, \cite[(1936), p.\,132]{Guareschi-Berzolari}\footnote{
\textsc{Guareschi} writes in \cite[(1936), p.\,131]{Guareschi-Berzolari}:
``Il concetto di semiretta tangente (o semitangente) ad un insieme puntuale in suo punto d'accumulazione [\dots], permette di strettamente collegare l'operazione analitica di differenziazione totale di una funzione di più variabili reali, in suo punto, al concetto sintetico di spazio lineare di dimensione minima contenente l'insieme tangente alla grafica della funzione stessa nel suo punto corrispondente; e tale collegamento porta a concludere che tanti sono i differenziali totali della funzione, quanti sono gli iperpiani passanti per quello spazio.''}, \textsc{severi} \cite[(1934), p.\,183-185]{Severi-diff}}]\label{proptangtrad-diff} If $A\subset\reals^{d}$,  $f:A\to\reals^{n}$ is a function,  $L:\reals^{d}\to\reals^{n}$ is  a linear function and  $\hat x\in A\cap\der(A)$, then the following three properties are equivalent:
 
\begin{enumerate}
\item\labelpag{diff1} $L$ is a differential of $f$  at $\hat x$, i.e., \begin{equation*}
\lim\nolimits_{A\ni x\rightarrow \hat{x}}\dfrac{f(x)-f(\hat{x})-L(x-\hat{x})%
}{\left\Vert x-\hat{x}\right\Vert }=0, \footnote{A function is said to be \emph{differentiable} at a point, when it admits a differential at the point.}
\end{equation*}%
\item\labelpag{diff2}  $f$ is continuous at $\hat x$ and $\graph(L)$ is tangent in traditional sense to $\graph(f)$ at $(\hat x,f(\hat x))$,
\item\labelpag{diff3}  $f$ is continuous at $\hat{x}$ and
$
\Tan^{+}(\graph (f),\left( \hat{x},f\left( \hat{x}%
\right) \right) )\subset \graph (L)
$.

\end{enumerate}

\end{proposition}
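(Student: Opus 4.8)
The plan is to treat the equivalence $\eqref{diff2}\Leftrightarrow\eqref{diff3}$ as an immediate consequence of Proposition~\ref{proptangtrad-anal}, and then to close the circle by proving $\eqref{diff1}\Rightarrow\eqref{diff3}$ and $\eqref{diff3}\Rightarrow\eqref{diff1}$ with elementary sequential arguments, invoking the compactness property \eqref{comp1} in the last and most delicate step.

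\emph{Step 1: $\eqref{diff2}\Leftrightarrow\eqref{diff3}$.} Both statements presuppose the continuity of $f$ at $\hat x$. Granted this, since $\hat x\in\der(A)$ one may pick $A\ni x_{m}\to\hat x$ with $x_{m}\ne\hat x$; then $\graph(f)\ni(x_{m},f(x_{m}))\to(\hat x,f(\hat x))$ and these points are distinct from $(\hat x,f(\hat x))$, so $(\hat x,f(\hat x))\in\der(\graph(f))$. Since $L$ is linear, $\graph(L)=\{(u,L(u)):u\in\reals^{d}\}$ is a vector subspace of $\reals^{d}\times\reals^{n}=\reals^{d+n}$. Applying Proposition~\ref{proptangtrad-anal} to the set $\graph(f)$, the accumulation point $(\hat x,f(\hat x))$ and the vector space $H=\graph(L)$ gives precisely: $\graph(L)$ is tangent in traditional sense to $\graph(f)$ at $(\hat x,f(\hat x))$ if and only if $\Tan^{+}(\graph(f),(\hat x,f(\hat x)))\subset\graph(L)$. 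Hence $\eqref{diff2}\Leftrightarrow\eqref{diff3}$.

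\emph{Step 2: $\eqref{diff1}\Rightarrow\eqref{diff3}$.} From \eqref{diff1} one has $f(x)-f(\hat x)=L(x-\hat x)+o(\|x-\hat x\|)$ as $A\ni x\to\hat x$, so $f$ is continuous at $\hat x$. For the inclusion, let $(v,w)\in\Tan^{+}(\graph(f),(\hat x,f(\hat x)))$; by \eqref{upperseq} there are $\lambda_{m}\to0^{+}$ and $x_{m}\in A$ with $\tfrac{x_{m}-\hat x}{\lambda_{m}}\to v$ and $\tfrac{f(x_{m})-f(\hat x)}{\lambda_{m}}\to w$. For the indices with $x_{m}=\hat x$ the quotient $\tfrac{f(x_{m})-f(\hat x)-L(x_{m}-\hat x)}{\lambda_{m}}$ is zero, whereas for the indices with $x_{m}\ne\hat x$ (along which $\|x_{m}-\hat x\|=\lambda_{m}\,\|\tfrac{x_{m}-\hat x}{\lambda_{m}}\|\to0$) one has
\[
\frac{f(x_{m})-f(\hat x)}{\lambda_{m}}-L\!\left(\frac{x_{m}-\hat x}{\lambda_{m}}\right)=\frac{f(x_{m})-f(\hat x)-L(x_{m}-\hat x)}{\|x_{m}-\hat x\|}\cdot\frac{\|x_{m}-\hat x\|}{\lambda_{m}}\longrightarrow 0,
\]
the first factor tending to $0$ by \eqref{diff1} and the second staying bounded (it tends to $\|v\|$). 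Thus $\tfrac{f(x_{m})-f(\hat x)}{\lambda_{m}}-L\!\left(\tfrac{x_{m}-\hat x}{\lambda_{m}}\right)\to0$ along the whole sequence, whence $w=L(v)$ and $(v,w)\in\graph(L)$.

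\emph{Step 3: $\eqref{diff3}\Rightarrow\eqref{diff1}$, the crux.} Assume $f$ continuous at $\hat x$ and $\Tan^{+}(\graph(f),(\hat x,f(\hat x)))\subset\graph(L)$, and suppose \eqref{diff1} fails. Then there are $\varepsilon>0$ and $A\ni x_{m}\to\hat x$ with $x_{m}\ne\hat x$ and $\|f(x_{m})-f(\hat x)-L(x_{m}-\hat x)\|\ge\varepsilon\,\|x_{m}-\hat x\|$ for all $m$; since the $x_{m}$ cannot take only finitely many values (they are $\ne\hat x$ yet converge to $\hat x$), we may assume them pairwise distinct. By continuity, $z_{m}:=(x_{m},f(x_{m}))\to(\hat x,f(\hat x))=:z$ in $\reals^{d+n}$ with pairwise distinct terms, so by \eqref{comp1} there is a tangential subsequence: $\lambda_{m_{k}}\to0^{+}$ and a non-null vector $(v,w)$ with $\tfrac{z_{m_{k}}-z}{\lambda_{m_{k}}}\to(v,w)$. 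Then $(v,w)\in\Tan^{+}(\graph(f),(\hat x,f(\hat x)))\subset\graph(L)$, so $w=L(v)$; and since $(v,w)\ne(0,0)$ we must have $v\ne0$ (otherwise $w=L(0)=0$). Consequently $\|x_{m_{k}}-\hat x\|/\lambda_{m_{k}}\to\|v\|>0$, and
\[
\frac{\|f(x_{m_{k}})-f(\hat x)-L(x_{m_{k}}-\hat x)\|}{\|x_{m_{k}}-\hat x\|}=\frac{\bigl\|\tfrac{f(x_{m_{k}})-f(\hat x)}{\lambda_{m_{k}}}-L\bigl(\tfrac{x_{m_{k}}-\hat x}{\lambda_{m_{k}}}\bigr)\bigr\|}{\|x_{m_{k}}-\hat x\|/\lambda_{m_{k}}}\longrightarrow\frac{\|w-L(v)\|}{\|v\|}=0,
\]
contradicting the lower bound $\varepsilon>0$. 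I expect this third step to be the main obstacle: one must upgrade the merely \emph{pointwise} cone inclusion of \eqref{diff3} to the \emph{uniform} little-$o$ estimate \eqref{diff1}, and the device that makes this possible is to normalize the increments of the graph by their norm in the product space $\reals^{d+n}$ — which is exactly what the compactness property \eqref{comp1} provides — so that the extracted limiting tangent vector is non-null and its $\reals^{d}$-component cannot vanish.
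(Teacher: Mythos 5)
Your proof is correct. Note first that the paper itself gives no proof of Proposition \ref{proptangtrad-diff}: it is stated with attributions to Guareschi and Severi, and the only proof actually written out is for the parallel strict-differentiability statement, Proposition \ref{propparatangtrad-diff}. Your overall architecture --- $\eqref{diff2}\Leftrightarrow\eqref{diff3}$ via Proposition \ref{proptangtrad-anal}, then the two implications $\eqref{diff1}\Rightarrow\eqref{diff3}$ and $\eqref{diff3}\Rightarrow\eqref{diff1}$ by sequential arguments and contradiction --- mirrors exactly the paper's treatment of that analogue. The genuine difference is in the hard direction. For $\eqref{sdiff3}\Rightarrow\eqref{sdiff1}$ the paper first invokes Lemma \ref{lipsch} to extract a local Lipschitz bound, then normalizes the domain increments by $\|x_{m}-y_{m}\|$ and uses the Lipschitz estimate to guarantee that the image quotients $\tfrac{f(x_{m})-f(y_{m})}{\|x_{m}-y_{m}\|}$ stay bounded and so have a convergent subsequence. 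You instead normalize the full graph increment in $\reals^{d+n}$ and invoke the compactness property \eqref{comp1}, which hands you a non-null tangent vector $(v,w)$ directly; the inclusion $\Tan^{+}(\graph(f),(\hat x,f(\hat x)))\subset\graph(L)$ then forces $v\ne0$, which is what lets you divide by $\|x_{m_{k}}-\hat x\|$ in the final estimate. This buys you a proof with no Lipschitz-type lemma at all (indeed no such lemma is available for mere Fr\'echet differentiability, since $\Tan^{+}$, unlike $\pTan^{+}$, does not control Lipschitz behaviour), at the modest cost of the small bookkeeping about indices with $x_{m}=\hat x$ in Step 2 and the pairwise-distinctness reduction in Step 3, both of which you handle correctly. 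Your argument also does not route through the Cyrenian lemma \ref{cireneoC1}, which the paper uses for the forward implication in the strict case; your direct computation in Step 2 is an adequate substitute.
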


Contrary to the tangency of a straight line to a curve and that of a plane to a surface at a given  point,  there is no established tradition for the notion of paratangency. Nevertheless, to manifest  a logical  correlation between tangency and paratangency  we introduce the following definition.

\begin{definition}\label{defparatangtrad} Let $\hat x$ be an accumulation point of a subset $F$ of $\reals^{n}$. A vector space $H$ of $\reals^{n}$ is said to be \emph{paratangent in  traditional sense} to $F$ at $\hat x$ if 
\begin{equation}
\lim\nolimits_{F\ni x,y\rightarrow \hat{x}\atop y\neq x}\dfrac{\dist(x,H+ y)}{%
\dist\left( x,y\right) }=0.  \label{paratangtrad}
\end{equation}%

\end{definition}

Since $\frac{\dist(x,H+ y)}{\dist\left( x,y\right)}$ is the sinus of the angle between $H$ and  the vector $x-y$, the geometric meaning of (\ref{paratangtrad}) is evident:  the straight-lines passing through $y$ and $x$ in $F$ and the affine space $H+\hat x$  form  an angle that tends to zero as $x$ and $y$ tend to $\hat{x}$.

The concepts of tangency and paratangency were either  confused  or improperly identified. Surprisingly,  \textsc{Lebesgue} used paratangency to define traditional tangency in \emph{Du choix des définitions} \cite[(1934) p.\,6]{lebesgue-def}: 
  \begin{quotation}
  L'idée de tangent provient de ce jugement: tout arc suffisement petit d'une courbe est rectiligne. [\dots] Dire qu'un élément de courbe et un élément de droite sont indiscernables, c'est dire que la droite est pratiquement confondue avec toute corde de l'élément de courbe, d'où cette définition: a) \emph{on dit qu'une courbe $C$ a des tangentes si, quel que soit le point $M$ de $C$ et de quelque manière qu'on fasse tendre les points $M_{1}$ et $M_{2}$ de $C$ vers $M$, la corde $M_{1}M_{2}$ tend vers une position limite déterminée par la donnée de $M$; on l'appelle tangente en $M$.}
  \end{quotation}
Paratangency to curves and surfaces was used by \textsc{Peano}  in \emph{Applicazioni geometriche} \cite[(1887), p.\,163, 181-184]{peano87} to evaluate the infinitesimal quotient of the length of an arc and its segment or its projection. Moreover,  the following proposition due to \textsc{Peano}   makes transparent the relation between
paratangency and $C^{1}$-smoothness.  

\begin{proposition}
[{\textsc{Peano} \cite[(1887), teorema II, p.\,59]{peano87}}]\label{prop:Ppara} If $\gamma$ is a continuously
differentiable curve and $\gamma'(\hat{t})\neq 0$, then its tangent straight
line at $\gamma[\hat t]$ is the limit of the lines passing through $\gamma[t]$ and $ \gamma[u]$    when $t\ne u$ and $t,u$ tend to $\hat{t}$.
\end{proposition}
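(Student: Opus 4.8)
The plan is to reduce the proposition to the \emph{strict differentiability} of the parametrization at $\hat t$ --- which is exactly where the $C^{1}$ hypothesis does its work --- and then to pass from convergence of secant directions to convergence of secant lines. Write $\gamma\colon I\to\reals^{n}$ on an interval $I$ having $\hat t$ as an accumulation point, and let $T:=\gamma(\hat t)+\reals\,\gamma'(\hat t)$ be the tangent line at $\gamma[\hat t]$. For sequences $t_{m},u_{m}\in I$ with $t_{m}\ne u_{m}$ and $t_{m},u_{m}\to\hat t$, put $L_{m}:=\gamma(u_{m})+\reals\,(\gamma(t_{m})-\gamma(u_{m}))$; the goal is to show $\Li_{m}L_{m}=\Ls_{m}L_{m}=T$.

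\textbf{Step 1.} First I would establish that
\begin{equation*}
\lim_{I\ni t,u\to\hat t,\ t\ne u}\ \frac{\gamma(t)-\gamma(u)}{t-u}=\gamma'(\hat t).
\end{equation*}
This follows from the integral mean-value identity $\frac{\gamma(t)-\gamma(u)}{t-u}=\int_{0}^{1}\gamma'\bigl(u+\theta(t-u)\bigr)\,\dd\theta$ together with the continuity of $\gamma'$ at $\hat t$: since $u+\theta(t-u)$ lies between $u$ and $t$, it is arbitrarily close to $\hat t$ once $t$ and $u$ are, so the integrand is uniformly in $\theta$ close to $\gamma'(\hat t)$.

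\textbf{Step 2.} Because $\gamma'(\hat t)\ne0$, Step~1 yields $\gamma(t_{m})\ne\gamma(u_{m})$ for large $m$ and
\begin{equation*}
\frac{\gamma(t_{m})-\gamma(u_{m})}{\|\gamma(t_{m})-\gamma(u_{m})\|}=\sgn(t_{m}-u_{m})\,\frac{q_{m}}{\|q_{m}\|},\qquad q_{m}:=\frac{\gamma(t_{m})-\gamma(u_{m})}{t_{m}-u_{m}}\longrightarrow\gamma'(\hat t),
\end{equation*}
so each $L_{m}$ is a genuine line through the point $\gamma(u_{m})\to\gamma(\hat t)$ with unit direction converging, up to sign, to $\gamma'(\hat t)/\|\gamma'(\hat t)\|$. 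Then I would check the elementary fact that ``base point $\to\gamma(\hat t)$ and direction $\to\pm\gamma'(\hat t)/\|\gamma'(\hat t)\|$'' forces $L_{m}\to T$ in the Kuratowski sense: for $T\subset\Li_{m}L_{m}$ approximate $\gamma(\hat t)+\sigma\gamma'(\hat t)$ by $\gamma(u_{m})+\sigma q_{m}\in L_{m}$; for $\Ls_{m}L_{m}\subset T$ note that if $\gamma(u_{m})+s_{m}q_{m}\to w$ along a subsequence, then $s_{m}q_{m}$ converges, and since $q_{m}\to\gamma'(\hat t)\ne0$ the scalars $s_{m}$ are bounded, whence (passing to a further subsequence) $w\in T$.

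The only real obstacle is Step~1. It is tempting to write $\gamma(t)-\gamma(u)=(t-u)\gamma'(\hat t)+o(t-\hat t)-o(u-\hat t)$ and be done, but this fails: $o(t-\hat t)-o(u-\hat t)$ need not be $o(t-u)$ when $t$ and $u$ tend to $\hat t$ at comparable speeds, so mere differentiability at $\hat t$ is insufficient. The continuity of $\gamma'$ near $\hat t$ (the $C^{1}$ hypothesis, equivalently strict differentiability at $\hat t$) is essential and enters precisely through the uniform estimate of the integrand above. Steps~2 is then routine, being just the continuity of the assignment ``(point, direction) $\mapsto$ line'' with respect to convergence of sets.
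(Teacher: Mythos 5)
The paper does not actually prove Proposition~\ref{prop:Ppara}: it is stated as a historical result with a bare citation to Peano's \emph{Applicazioni geometriche}, so there is no in-paper argument to compare yours against. On its own merits your proof is correct and complete. Step~1 is the standard derivation of strict differentiability from the $C^{1}$ hypothesis via $\frac{\gamma(t)-\gamma(u)}{t-u}=\int_{0}^{1}\gamma'\bigl(u+\theta(t-u)\bigr)\,\dd\theta$, and your remark that pointwise differentiability at $\hat t$ alone would not suffice is exactly the right caveat (it is the same distinction the paper draws between differentiability and strict differentiability in Section~\ref{sec-tan-paratan-diff}). Step~2 correctly handles both Kuratowski inclusions, including the one point that is easy to overlook: in proving $\Ls_{m}L_{m}\subset T$ you need the scalars $s_{m}$ to be bounded, which you extract from $\|q_{m}\|\to\|\gamma'(\hat t)\|>0$. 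An alternative, more in the spirit of the paper's own toolkit, would be to invoke the Cyrenian lemma~\ref{cireneoC1} (or Proposition~\ref{propparatangtrad-diff}) to get $\frac{\gamma(t)-\gamma(u)}{t-u}\to\gamma'(\hat t)$ directly from strict differentiability, and then Proposition~\ref{propconv}/Corollary~\ref{coroconv2} to convert convergence of directions into convergence of the lines; your self-contained route buys independence from that machinery at essentially no extra cost.
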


 Analytically we have that
\begin{proposition}\label{propparatangtrad-anal}  A vector space $H$ of $\reals^{n}$ is   \emph{paratangent in  traditional sense} to $F$ at an accumulation point $\hat x$ of $F$ if and only if
\begin{equation}
\pTan^{+}(F,\hat{x})\subset H.\,   \label{paratangtrad-anal}
\end{equation}%
\end{proposition}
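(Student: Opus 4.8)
The plan is to mimic the proof of Proposition~\ref{proptangtrad-anal}, replacing the single-point blow-up $\frac{1}{\lambda}(F-\hat x)$ by the double-point blow-up $\frac{1}{\lambda}(F-y)$ with $F\ni y\to\hat x$, and using the sequential description \eqref{paraseq} of $\pTan^{+}(F,\hat x)$ together with the observation that $\dist(x,H+y)/\dist(x,y)=\dist\!\big(\tfrac{x-y}{\|x-y\|},H\big)$ is exactly the sinus of the angle between $H$ and $x-y$.

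\emph{Necessity of \eqref{paratangtrad-anal}.} Suppose $H$ is paratangent in traditional sense to $F$ at $\hat x$, and fix a non-null $v\in\pTan^{+}(F,\hat x)$ (the case $v=0$ is trivial since $0\in H$). By \eqref{paraseq} there are infinitesimal $\{\lambda_m\}_m\subset\reals_{++}$ and sequences $\{x_m\}_m,\{y_m\}_m\subset F$ with $x_m\to\hat x$, $y_m\to\hat x$ and $\frac{x_m-y_m}{\lambda_m}\to v$. Since $v\neq 0$, eventually $x_m\neq y_m$ and $\frac{x_m-y_m}{\|x_m-y_m\|}\to\frac{v}{\|v\|}$. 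Applying \eqref{paratangtrad} along the couple $\{x_m\}_m,\{y_m\}_m$ gives $0=\lim_m\frac{\dist(x_m,H+y_m)}{\|x_m-y_m\|}=\lim_m\dist\!\big(\tfrac{x_m-y_m}{\|x_m-y_m\|},H\big)=\dist\!\big(\tfrac{v}{\|v\|},H\big)$, so $v\in H$ because $H$ is a closed vector space. Hence $\pTan^{+}(F,\hat x)\subset H$.

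\emph{Sufficiency of \eqref{paratangtrad-anal}.} Suppose \eqref{paratangtrad} fails. Then there are $\varepsilon>0$ and couples $\{x_m\}_m,\{y_m\}_m\subset F$ with $x_m\neq y_m$, $x_m\to\hat x$, $y_m\to\hat x$ and $\dist\!\big(\tfrac{x_m-y_m}{\|x_m-y_m\|},H\big)\ge\varepsilon$ for all $m$. The unit vectors $\tfrac{x_m-y_m}{\|x_m-y_m\|}$ lie in the compact unit sphere, so after passing to a subsequence they converge to a unit vector $w$ with $\dist(w,H)\ge\varepsilon$, in particular $w\notin H$. On the other hand, writing $\lambda_m:=\|x_m-y_m\|$ (which $\to 0$ since both $x_m,y_m\to\hat x$), we get $\frac{x_m-y_m}{\lambda_m}\to w$ along this subsequence, so $w\in\pTan^{+}(F,\hat x)$ by \eqref{paraseq}; combined with \eqref{paratangtrad-anal} this forces $w\in H$, a contradiction. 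Therefore \eqref{paratangtrad} holds.

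The argument is essentially routine once the right normalization $\lambda_m=\|x_m-y_m\|$ is chosen; the only point that needs a little care is the passage to a subsequence of unit vectors in the sufficiency part and the verification that this forces the limiting direction into $\pTan^{+}(F,\hat x)$ via \eqref{paraseq}. (As with the isolated-point convention after Proposition~\ref{proptangtrad-anal}, one should note that when $\hat x$ is not an accumulation point the statement is vacuous or conventional, but the hypothesis already assumes $\hat x\in\der(F)$.)
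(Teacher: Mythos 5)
Your proof is correct and takes exactly the route the paper intends: the paper's own ``proof'' of this proposition is the one-line instruction to proceed as in Proposition~\ref{proptangtrad-anal}, and your adaptation of that argument to paratangential couples --- using the sequential description \eqref{paraseq}, the identity $\dist(x,H+y)/\|x-y\|=\dist\bigl(\tfrac{x-y}{\|x-y\|},H\bigr)$, and the normalization $\lambda_m=\|x_m-y_m\|$ with a compactness extraction in the sufficiency direction --- is precisely the intended one.
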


\begin{proof}[\emph{\textbf{Proof}}] Proceed similarly as in the  proof of Proposition \ref{proptangtrad-anal}. \footnote{\label{foot1}Among the non-empty sets $H$ (not necessary, vector spaces) verifying \eqref{paratangtrad}, the smallest closed set is the upper paratangent cone.}
\end{proof}

According to \eqref{paratangtrad-anal},  we assume, as a definition, that every vector space of $\reals^{n}$ is paratangent in traditional sense to $F$ at the isolated points of $F$. 

As for upper tangent cones,  the \emph{linear upper paratangent space} of $F$ at $\hat x$, i.e.,  the linear hull of the upper paratangent cone of $F$ at $\hat x$, was  introduced by  \textsc{Guareschi} \cite[(1941), p.\,154]{Guareschi-Roma}; it is denoted by $\pLTan^{+}(F,\hat x)$.  By Proposition \ref{propparatangtrad-anal}, the linear upper paratangent space is the smallest vector space which is paratangent in traditional sense.

In the case where $F$ is a graph of a function $f$, the paratangency in traditional sense becomes strict differentiability.

\begin{definition}[{\textsc{Peano} in \cite[(1892)]{peano_mathesis} for $n=d=1$, 
\textsc{Severi} in \cite[(1934), p.\,185]{Severi-diff}}] Let $A\subset\reals^{d}$, let $f:A\to\reals^{n}$ be a function and let $\hat x\in A\cap\der(A)$. The function $f$ is said to be \emph{strictly differentiable at} $\hat x$, if there is a linear function $L:\reals^{d}\to\reals^{n}$ (called \emph{strict differential} of $f$ at $\hat x$) such that 
\begin{equation*}
\lim\nolimits_{A\ni x,y\rightarrow \hat{x}\atop y\ne x}\dfrac{f(x)-f(y)-L(x-y)%
}{\left\Vert x-y\right\Vert }=0.\,\footnote{As usual, ``strictly differentiable function on a set $X$'' stands for ``strictly differentiable function at every point belonging to $X$''. 
}
\end{equation*}

\end{definition}

\begin{proposition}[{\textsc{Severi} \protect\cite[(1934), p.\,189]{Severi-diff}, \textsc{Guareschi} \protect\cite[(1941), p.\,161]{Guareschi-Roma}}]\label{propparatangtrad-diff} If $A\subset\reals^{d}$,  $f:A\to\reals^{n}$ is a function,  $L:\reals^{d}\to\reals^{n}$ is  a linear function and  $\hat x\in A\cap\der(A)$, then  the following three properties are equivalent:
\begin{enumerate}
\item\labelpag{sdiff1}  $L$ is a strict differential of $f$  at $\hat x$,
\item\labelpag{sdiff2}  $f$ is continuous at $\hat x$ and $\graph(L)$ is paratangent in traditional sense to $\graph(f)$ at $(\hat x,f(\hat x))$,
\item\labelpag{sdiff3}  $f$ is continuous at $\hat{x}$ and
$
\pTan^{+}(\graph (f),\left( \hat{x},f\left( \hat{x}%
\right) \right) )\subset \graph (L)
$.

\end{enumerate}

\end{proposition}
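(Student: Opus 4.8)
The plan is to run the argument in parallel with the proof of Proposition \ref{proptangtrad-diff}, replacing throughout the increment $x-\hat x$ by the increment $x-y$ and the upper tangent cone by the upper paratangent cone. First, the equivalence $\eqref{sdiff2}\Leftrightarrow\eqref{sdiff3}$ is essentially a restatement of Proposition \ref{propparatangtrad-anal}: in both items $f$ is assumed continuous at $\hat x$, so (since $\hat x\in\der(A)$) the point $(\hat x,f(\hat x))$ is an accumulation point of $\graph(f)$, and $\graph(L)$ is a vector subspace; applying Proposition \ref{propparatangtrad-anal} with $F=\graph(f)$ and $H=\graph(L)$ yields exactly the claimed equivalence. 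It therefore remains to link $\eqref{sdiff1}$ with $\eqref{sdiff3}$.

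That $\eqref{sdiff1}$ implies continuity of $f$ at $\hat x$ follows at once by specializing the strict-differentiability limit to $y=\hat x$, which gives $\lim_{A\ni x\to\hat x}\frac{f(x)-f(\hat x)-L(x-\hat x)}{\left\Vert x-\hat x\right\Vert}=0$, i.e. ordinary differentiability at $\hat x$, hence continuity. For $\eqref{sdiff1}\Rightarrow\eqref{sdiff3}$, take $(v,w)\in\pTan^{+}(\graph(f),(\hat x,f(\hat x)))$ and apply the sequential characterization \eqref{paraseq}: there are $\lambda_{m}\to0^{+}$, points $(y_{m},f(y_{m}))\to(\hat x,f(\hat x))$ and $(x_{m},f(x_{m}))\in\graph(f)$ with $\frac{x_{m}-y_{m}}{\lambda_{m}}\to v$ and $\frac{f(x_{m})-f(y_{m})}{\lambda_{m}}\to w$; in particular $x_{m},y_{m}\to\hat x$. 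Ignoring the indices with $x_{m}=y_{m}$ (for which the quotient vanishes, contributing only $(v,w)=(0,0)\in\graph(L)$), strict differentiability gives $\frac{f(x_{m})-f(y_{m})-L(x_{m}-y_{m})}{\left\Vert x_{m}-y_{m}\right\Vert}\to0$; multiplying by the bounded factor $\left\Vert x_{m}-y_{m}\right\Vert/\lambda_{m}=\left\Vert\tfrac{x_{m}-y_{m}}{\lambda_{m}}\right\Vert\to\left\Vert v\right\Vert$ and using continuity of $L$ one obtains $w-L(v)=0$, so $(v,w)\in\graph(L)$.

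The main obstacle is the converse $\eqref{sdiff3}\Rightarrow\eqref{sdiff1}$, which I would prove by contradiction. If strict differentiability fails, there are $x_{m}\ne y_{m}$ in $A$ with $x_{m},y_{m}\to\hat x$ and $\left\Vert f(x_{m})-f(y_{m})-L(x_{m}-y_{m})\right\Vert\ge\varepsilon\left\Vert x_{m}-y_{m}\right\Vert$ for some fixed $\varepsilon>0$. The delicate point is that one cannot directly normalize by $\left\Vert x_{m}-y_{m}\right\Vert$, since a priori $f(x_{m})-f(y_{m})$ need not be $O(\left\Vert x_{m}-y_{m}\right\Vert)$; instead I would normalize by the graph increment $\rho_{m}:=\left\Vert(x_{m}-y_{m},\,f(x_{m})-f(y_{m}))\right\Vert$, which is positive and, by continuity of $f$ at $\hat x$, tends to $0$. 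Passing to a subsequence along which $\tfrac{1}{\rho_{m}}(x_{m}-y_{m},\,f(x_{m})-f(y_{m}))\to(v,w)$ with $\left\Vert(v,w)\right\Vert=1$, the characterization \eqref{paraseq} places $(v,w)$ in $\pTan^{+}(\graph(f),(\hat x,f(\hat x)))$, hence in $\graph(L)$ by $\eqref{sdiff3}$, so $w=L(v)$. Since $L(0)=0$ and $\left\Vert(v,w)\right\Vert=1$, necessarily $v\ne0$, whence $\left\Vert x_{m}-y_{m}\right\Vert/\rho_{m}\to\left\Vert v\right\Vert>0$ and the quotients $\rho_{m}/\left\Vert x_{m}-y_{m}\right\Vert$ stay bounded. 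Writing $\frac{f(x_{m})-f(y_{m})-L(x_{m}-y_{m})}{\left\Vert x_{m}-y_{m}\right\Vert}=\frac{\rho_{m}}{\left\Vert x_{m}-y_{m}\right\Vert}\left(\frac{f(x_{m})-f(y_{m})}{\rho_{m}}-L\left(\frac{x_{m}-y_{m}}{\rho_{m}}\right)\right)$ and using $\frac{f(x_{m})-f(y_{m})}{\rho_{m}}\to w$, $L\left(\frac{x_{m}-y_{m}}{\rho_{m}}\right)\to L(v)$ and $w=L(v)$, the right-hand side tends to $0$, contradicting the lower bound $\varepsilon$. Together with $\eqref{sdiff2}\Leftrightarrow\eqref{sdiff3}$ this completes the proof.
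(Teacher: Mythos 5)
Your proof is correct, and its skeleton matches the paper's: the equivalence $\eqref{sdiff2}\Leftrightarrow\eqref{sdiff3}$ via Proposition \ref{propparatangtrad-anal}, the implication $\eqref{sdiff1}\Rightarrow\eqref{sdiff3}$ via the sequential description \eqref{paraseq}, and $\eqref{sdiff3}\Rightarrow\eqref{sdiff1}$ by contradiction plus compactness. The differences are in which auxiliary results get invoked. For $\eqref{sdiff1}\Rightarrow\eqref{sdiff3}$ the paper cites the Cyrenian lemma \ref{cireneoC1}; your rescaling argument (multiplying the strict-differentiability quotient by the bounded factor $\|x_{m}-y_{m}\|/\lambda_{m}\to\|v\|$) is in substance a proof of the relevant direction of that lemma, written inline, with the degenerate indices $x_{m}=y_{m}$ correctly disposed of. The more noteworthy divergence is in $\eqref{sdiff3}\Rightarrow\eqref{sdiff1}$: the paper first invokes Lemma \ref{lipsch} to obtain a local Lipschitz bound, which is what makes the quotients $\frac{f(x_{m})-f(y_{m})}{\|x_{m}-y_{m}\|}$ bounded and hence subsequentially convergent. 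You bypass that lemma by normalizing with the graph increment $\rho_{m}:=\|(x_{m}-y_{m},f(x_{m})-f(y_{m}))\|$, extracting a unit-norm limit $(v,w)\in\pTan^{+}(\graph(f),(\hat x,f(\hat x)))\subset\graph(L)$, and then deducing $v\ne0$ from $w=L(v)$ and $\|(v,w)\|=1$ --- which is exactly the ``no vertical lines'' information that Lemma \ref{lipsch} packages, recovered here only along the offending sequence. Since Lemma \ref{lipsch} is stated without proof in the paper, your route is the more self-contained of the two; the paper's route buys a reusable Lipschitz estimate that it needs again elsewhere (e.g.\ in Lemma \ref{corosdiffsequel}). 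You also explicitly check that $\eqref{sdiff1}$ yields continuity of $f$ at $\hat x$ by specializing $y=\hat x$, a small point the paper leaves tacit.
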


In order to prove this Proposition \ref{propparatangtrad-diff} we use the following two lemmata.

\begin{lemma}
[\textbf{Cyrenian lemma for strict differentiability}]\label{cireneoC1} Let $f$, $L$, $A$ and  $\hat x$ be as  Proposition  $\ref{propparatangtrad-diff}$. Then $L$ is a strict
differential of $f$ at $\hat{x}$ if and only if 
\begin{enumerate}
\item $\lim\nolimits_{m}%
\dfrac{f(x_{m})-f(y_{m})}{\lambda _{m}}=L(v)$ for each $v\in \mathbb{R}%
^{d} $ and for all sequences $\left\{ \lambda _{m}\right\} _{m}\subset \mathbb{R}_{++}$, $\left\{ x_{m}\right\} _{m}, \left\{ y_{m}\right\} _{m}\subset A$ such that 
$%
\lim\nolimits_{m}\lambda _{m}=0$, $\lim_{m\to\infty}y_{m}=\hat x$ and
$\lim\nolimits_{m}\dfrac{x_{m}-y_{m}}{\lambda _{m}}=v$.\hfill$\Box$\footnote{Immediate consequences of Cyrenian lemma  \ref{cireneoC1} are the chain rule for strictly differentiable functions and the following inverse function theorem: ``Let $A$ be a non empty subset of $\reals^{d}$ such that $A\subset\der(A)$. If $f:A\to\reals^{n}$ is injective, strictly differentiable and non-singular on $A$, then  $(a)$ $(f_{|A})^{-1}$ is strictly differentiable on $f(A)$ and $(b)$ $f(A)$ is a $C^{1}$-manifold, whenever $A$ is a $C^{1}$-manifold''. }
\end{enumerate}

\end{lemma}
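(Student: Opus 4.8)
The statement to be proved is the "Cyrenian lemma for strict differentiability": $L$ is a strict differential of $f$ at $\hat x$ if and only if condition $(1)$ holds, where $(1)$ asserts that $\frac{f(x_m)-f(y_m)}{\lambda_m}\to L(v)$ for every $v\in\reals^d$ and every triple of sequences $\{\lambda_m\}\subset\reals_{++}$, $\{x_m\},\{y_m\}\subset A$ with $\lambda_m\to0$, $y_m\to\hat x$ and $\frac{x_m-y_m}{\lambda_m}\to v$. The plan is to prove both implications directly from the $\varepsilon$-$\delta$ definition of strict differentiability, treating $(1)$ as its sequential reformulation.

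\emph{Necessity} ($\Rightarrow$). Assume $L$ is a strict differential of $f$ at $\hat x$. Given sequences as in $(1)$, write
\[
\frac{f(x_m)-f(y_m)}{\lambda_m}-L(v)=\frac{f(x_m)-f(y_m)-L(x_m-y_m)}{\|x_m-y_m\|}\cdot\frac{\|x_m-y_m\|}{\lambda_m}+L\!\left(\frac{x_m-y_m}{\lambda_m}-v\right).
\]
The second term tends to $0$ by continuity (linearity) of $L$. For the first term, note $\frac{\|x_m-y_m\|}{\lambda_m}\to\|v\|$ is bounded; and since $y_m\to\hat x$ and $x_m-y_m=\lambda_m\big(\frac{x_m-y_m}{\lambda_m}\big)\to0$, we also have $x_m\to\hat x$, so both $x_m,y_m\to\hat x$. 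If $x_m=y_m$ for infinitely many $m$ the difference quotient is interpreted as $0$ there (and then $v$-component forces $v=0$ along that subsequence, consistent); on the remaining indices the strict-differentiability quotient $\frac{f(x_m)-f(y_m)-L(x_m-y_m)}{\|x_m-y_m\|}\to0$. Hence the whole expression tends to $0$. (One should handle separately, by passing to subsequences, the indices where $x_m=y_m$ and where $x_m\ne y_m$; this is the only mildly delicate bookkeeping.)

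\emph{Sufficiency} ($\Leftarrow$). Assume $(1)$ and suppose, for contradiction, that $L$ is not a strict differential of $f$ at $\hat x$. Then there is $\varepsilon_0>0$ and sequences $\{x_m\},\{y_m\}\subset A$ with $x_m\ne y_m$, $x_m\to\hat x$, $y_m\to\hat x$, and
\[
\frac{\|f(x_m)-f(y_m)-L(x_m-y_m)\|}{\|x_m-y_m\|}\ge\varepsilon_0\quad\text{for all }m.
\]
Set $\lambda_m:=\|x_m-y_m\|\to0$ and $u_m:=\frac{x_m-y_m}{\lambda_m}$, a unit vector. By compactness of the unit sphere, pass to a subsequence along which $u_m\to v$ with $\|v\|=1$; then $\frac{x_m-y_m}{\lambda_m}\to v$ and $y_m\to\hat x$, so $(1)$ gives $\frac{f(x_m)-f(y_m)}{\lambda_m}\to L(v)$, i.e. $\frac{f(x_m)-f(y_m)-L(x_m-y_m)}{\lambda_m}\to L(v)-L(v)=0$, contradicting that this quantity has norm $\ge\varepsilon_0$ along the subsequence. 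Hence $L$ is a strict differential of $f$ at $\hat x$.

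\emph{Main obstacle.} There is no real analytic difficulty; the proof is a standard translation between the metric limit and its sequential characterization (exactly parallel to \eqref{upperdist}--\eqref{upperseq}). The one point needing care is the necessity direction when $x_m=y_m$ for infinitely many $m$: one must split the index set and argue on each part, observing that on the part where $x_m=y_m$ the hypothesis $\frac{x_m-y_m}{\lambda_m}\to v$ forces $v=0$ so that $L(v)=0$ matches the zero difference quotient. Everything else is the boundedness of $\frac{\|x_m-y_m\|}{\lambda_m}$ and continuity of the linear map $L$.
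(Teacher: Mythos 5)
Your proof is correct. Note that the paper itself offers no proof of this lemma --- it is stated with a terminal $\Box$ and treated as routine --- so there is nothing to compare against; your argument is exactly the standard one the authors presumably had in mind: the algebraic decomposition
$\frac{f(x_{m})-f(y_{m})}{\lambda_{m}}-L(v)=\frac{f(x_{m})-f(y_{m})-L(x_{m}-y_{m})}{\|x_{m}-y_{m}\|}\cdot\frac{\|x_{m}-y_{m}\|}{\lambda_{m}}+L\bigl(\frac{x_{m}-y_{m}}{\lambda_{m}}-v\bigr)$
for necessity, and normalization by $\lambda_{m}:=\|x_{m}-y_{m}\|$ plus compactness of the unit sphere for sufficiency. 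Your handling of the indices with $x_{m}=y_{m}$ (which force $v=0$ along that subsequence) is the right bookkeeping and closes the only gap a careless version would leave.
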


\begin{lemma}
\label{lipsch}  Let $A$ be a subset of $\reals^{d}$ and let $\hat x$. A function $f:A\to\reals^{n}$ is locally Lipschitz at $\hat x$ if and only if
 $f$ is continuous at $\hat x$ and $\pTan^{+}(\graph(f), (\hat x,f(\hat x)))$ does not contain vertical lines. \footnote{We will say that a cone  $B\subset \reals^{d}\times\reals^{n}$  does \emph{not contain vertical lines}, whenever  do not exist  pairs $(v,w)\in\reals^{d}\times\reals^{n}$  such that $(v,w)\in B$, $v=0$ and $w\ne0$.} \hfill$\Box$
\end{lemma}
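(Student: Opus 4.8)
The plan is to read both implications off the sequential description \eqref{paraseq} of the upper paratangent cone, using that a non-null \emph{vertical} paratangent vector $(0,w)$, $w\neq 0$, records an increment of $f$ which is infinitely large compared with the corresponding increment of the argument — exactly the obstruction to a Lipschitz bound.

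\textbf{Necessity.} Assume $f$ is locally Lipschitz at $\hat x$, say $\|f(x)-f(y)\|\le K\|x-y\|$ for all $x,y\in A\cap\B_{\varepsilon}(\hat x)$, with $\varepsilon\in\reals_{++}$ and $K\in\reals_{+}$. Continuity of $f$ at $\hat x$ is then immediate. To rule out vertical lines, let $(v,w)\in\pTan^{+}(\graph(f),(\hat x,f(\hat x)))$ and invoke \eqref{paraseq}: there are $\lambda_{m}\to0^{+}$ and $\{x_{m}\}_{m},\{y_{m}\}_{m}\subset A$ with $y_{m}\to\hat x$, $\frac{x_{m}-y_{m}}{\lambda_{m}}\to v$ and $\frac{f(x_{m})-f(y_{m})}{\lambda_{m}}\to w$; since $\lambda_{m}\to0$ and $\frac{x_{m}-y_{m}}{\lambda_{m}}$ is bounded, $x_{m}-y_{m}\to0$, so $x_{m}\to\hat x$ as well. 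For $m$ large, $x_{m},y_{m}\in\B_{\varepsilon}(\hat x)$, whence $\frac{\|f(x_{m})-f(y_{m})\|}{\lambda_{m}}\le K\frac{\|x_{m}-y_{m}\|}{\lambda_{m}}$; letting $m\to\infty$ gives $\|w\|\le K\|v\|$, so $v=0$ forces $w=0$. Thus $\pTan^{+}(\graph(f),(\hat x,f(\hat x)))$ contains no vertical line.

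\textbf{Sufficiency.} Assume $f$ is continuous at $\hat x$ but, for contradiction, not locally Lipschitz at $\hat x$. Then for every $m\in\naturals_{1}$ there are $x_{m},y_{m}\in A\cap\B_{1/m}(\hat x)$ with $\|f(x_{m})-f(y_{m})\|>m\,\|x_{m}-y_{m}\|$; in particular $x_{m}\neq y_{m}$, $x_{m},y_{m}\to\hat x$, and $\lambda_{m}:=\|f(x_{m})-f(y_{m})\|>0$. By continuity of $f$ at $\hat x$, $f(x_{m}),f(y_{m})\to f(\hat x)$, hence $\lambda_{m}\to0^{+}$ and the points $(x_{m},f(x_{m})),(y_{m},f(y_{m}))\in\graph(f)$ both converge to $(\hat x,f(\hat x))$. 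The unit vectors $\frac{f(x_{m})-f(y_{m})}{\lambda_{m}}$ have, by compactness of the unit sphere, a subsequence converging to some $w$ with $\|w\|=1$; along it $\frac{\|x_{m}-y_{m}\|}{\lambda_{m}}=\frac{\|x_{m}-y_{m}\|}{\|f(x_{m})-f(y_{m})\|}<\frac1m\to0$, so $\frac{x_{m}-y_{m}}{\lambda_{m}}\to0$. Therefore $\frac{(x_{m},f(x_{m}))-(y_{m},f(y_{m}))}{\lambda_{m}}\to(0,w)$, and \eqref{paraseq} yields $(0,w)\in\pTan^{+}(\graph(f),(\hat x,f(\hat x)))$ with $w\neq0$: a vertical line, contradicting the hypothesis. (When $\hat x$ is isolated in $A$ both sides are trivially true, so one may assume $\hat x\in\der(A)$.)

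\textbf{Main difficulty.} There is no real obstacle beyond the bookkeeping in the sufficiency step: the renormalizing scalars must be taken to be precisely $\lambda_{m}=\|f(x_{m})-f(y_{m})\|$, so that the graph-increment, rescaled, keeps its $\reals^{n}$-component on the unit sphere (forcing $w\neq0$) while the non-Lipschitz inequality simultaneously drives its $\reals^{d}$-component to $0$. The rest is the routine translation between the metric form of ``locally Lipschitz'' and the sequential form of the paratangent cone.
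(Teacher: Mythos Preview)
Your proof is correct. The paper does not supply a proof of this lemma (the $\Box$ after the statement signals that it is left to the reader), so there is no paper argument to compare against; your approach---reading both directions off the sequential description \eqref{paraseq}, with the key choice $\lambda_{m}=\|f(x_{m})-f(y_{m})\|$ in the sufficiency step to force a non-null vertical paratangent vector---is the natural one and goes through without issue.
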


\begin{proof}[\emph{\textbf{Proof of Proposition \ref{propparatangtrad-diff}}}]  $\eqref{sdiff2}\Longleftrightarrow\eqref{sdiff3}$: It follows from Proposition \ref{propparatangtrad-anal}.

$\eqref{sdiff1}\Longrightarrow\eqref{sdiff3}$: Let $v\in \pTan^{+}\big(\graph(f),(\hat x, f(\hat x)\big)$. Take $v_{1}\in\reals^{d}$ and $v_{2}\in\reals^{n}$ such that $v=(v_{1},v_{2})$. By definition of $\pTan^{+}$,   there are $\{\lambda_{m}\}_{m}\subset\reals_{++}$, $\{x_{m}\}_{m}, \{y_{m}\}_{m}\subset A$ such that $\lim_{m} \lambda_{m}=0$, $\lim_{m}y_{m}=\hat x$   and $\lim_{m}\frac{x_{m} -y_{m}}{\lambda_{m}}=v_{1}$ and  $\lim_{m}\frac{f(x_{m})-f(y_{m})}{\lambda_{m}}=v_{2}$. Hence, by Cyrenian lemma \ref{cireneoC1} one has $L(v_{1})=v_{2}$, that is $v=(v_{1},v_{2})\in\graph(L)$. 

$\eqref{sdiff3}\Longrightarrow\eqref{sdiff1}$. By Lemma \ref{lipsch}, condition  \eqref{sdiff3} implies the existence of $M, \varepsilon, \in\reals_{++}$ such that
\[\|f(x)-f(y)\|\le M\|x-y\|\text{ for every }x,y\in \B_{\varepsilon}(\hat x).
\leqno(*)\]

To prove \eqref{sdiff1}, by absurd suppose that \eqref{sdiff1} does not hold. Then there are  sequences $\{x_{m}\}_{m}, \{y_{m}\}_{m} \subset A$ with $x_{m}\ne y_{m}$ for $m\in\naturals$ such that $\lim_{m\to\infty}y_{m}=\hat x$ and 
\[
\lim_{m\to\infty}\left\|\frac{f(x_{m})-f(y_{m})}{\|x_{m}-y_{m}\|}-L(\frac{x_{m}-y_{m}}{\|x_{m}-y_{m}\|})\right\|>0.
\leqno(**)\]
By compactness and $(*)$, without loss of generality we assume that 
$\lim_{m}\frac{x_{m}-y_{m}}{\|x_{m}-y_{m}\|}=v_{1}\in\reals^{d}$ and $\lim_{m}\frac{f(x_{m})-f(y_{m})}{\|x_{m}-y_{m}\|}=v_{2}\in\reals^{d}$. Hence $(v_{1},v_{2})\in\pTan^{+}(\graph (f),\left( \hat{x},f\left( \hat{x}%
\right) \right) )$. By \eqref{sdiff3} we have that $(v_{1},v_{2})\in\graph(L)$, that is $L(v_{1})=v_{2}$. Therefore \\ $\lim_{m}\left\|\frac{f(x_{m})-f(y_{m})}{\|x_{m}-y_{m}\|}-L(\frac{x_{m}-y_{m}}{\|x_{m}-y_{m}\|})\right\|=\|v_{2}-L(v_{1})\|=0$, contradicting $(**)$.
\end{proof}

\begin{corollary}\label{corosdiff} Let $A$ be a subset of $\reals^{d}$ and let $\hat x\in  A\cap\der(A)$. A function $\varphi:A\to\reals^{n}$ is  strictly differentiable at $\hat x$  if and only if $\varphi$ is continuous at $\hat x$ and  $\pLTan(\graph(\varphi), (\hat x,\varphi(\hat x)))$ does not contain vertical lines.

\end{corollary}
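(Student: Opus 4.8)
The plan is to deduce Corollary~\ref{corosdiff} from Proposition~\ref{propparatangtrad-diff}, the only extra ingredient being an elementary linear-algebra observation translating between ``$\pTan^{+}(\graph(\varphi),\cdot)\subset\graph(L)$ for some linear $L$'' and ``$\pLTan(\graph(\varphi),\cdot)$ contains no vertical line''. Recall that $\pLTan(\graph(\varphi),(\hat x,\varphi(\hat x)))$ is, by definition, the linear hull of $\pTan^{+}(\graph(\varphi),(\hat x,\varphi(\hat x)))$.

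First I would treat the implication from strict differentiability. Suppose $L:\reals^{d}\to\reals^{n}$ is a strict differential of $\varphi$ at $\hat x$. Then Proposition~\ref{propparatangtrad-diff}, in the form $\eqref{sdiff1}\Rightarrow\eqref{sdiff3}$, gives that $\varphi$ is continuous at $\hat x$ and $\pTan^{+}(\graph(\varphi),(\hat x,\varphi(\hat x)))\subset\graph(L)$. Since $\graph(L)$ is a vector space containing $\pTan^{+}$, it contains its linear hull $\pLTan(\graph(\varphi),(\hat x,\varphi(\hat x)))$ as well. Finally $\graph(L)$ contains no vertical line, because $(0,w)\in\graph(L)$ forces $w=L(0)=0$; hence neither does $\pLTan(\graph(\varphi),(\hat x,\varphi(\hat x)))$. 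This settles one direction.

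For the converse, assume $\varphi$ is continuous at $\hat x$ and that $V:=\pLTan(\graph(\varphi),(\hat x,\varphi(\hat x)))$ contains no vertical line. The key step is this: ``no vertical line'' means precisely $V\cap(\{0\}\times\reals^{n})=\{0\}$, so the canonical projection $\pi:\reals^{d}\times\reals^{n}\to\reals^{d}$ is injective on the subspace $V$; therefore $V$ is the graph of a linear map $L_{0}$ defined on its image $\pi(V)\subset\reals^{d}$, and any linear extension $L:\reals^{d}\to\reals^{n}$ of $L_{0}$ satisfies $\pTan^{+}(\graph(\varphi),(\hat x,\varphi(\hat x)))\subset V\subset\graph(L)$. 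Since $\varphi$ is continuous at $\hat x$, Proposition~\ref{propparatangtrad-diff} in the form $\eqref{sdiff3}\Rightarrow\eqref{sdiff1}$ then yields that $L$ is a strict differential of $\varphi$ at $\hat x$, i.e.\ $\varphi$ is strictly differentiable there.

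I do not expect a genuine obstacle; the only point worth a line is that enlarging the domain of $L_{0}$ from $\pi(V)$ to all of $\reals^{d}$ is harmless, since Proposition~\ref{propparatangtrad-diff} requires only the inclusion $\pTan^{+}\subset\graph(L)$, not equality, and a larger domain merely enlarges the graph. (Note that Lemma~\ref{lipsch} already performs the substantive work of extracting local Lipschitzness inside the proof of $\eqref{sdiff3}\Rightarrow\eqref{sdiff1}$, so it need not be invoked separately here.)
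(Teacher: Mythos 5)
Your proposal is correct and follows essentially the same route as the paper: necessity via $\eqref{sdiff1}\Rightarrow\eqref{sdiff3}$ of Proposition~\ref{propparatangtrad-diff}, and sufficiency by building a linear $L:\reals^{d}\to\reals^{n}$ whose graph contains $\pLTan^{+}(\graph(\varphi),(\hat x,\varphi(\hat x)))$ and then applying $\eqref{sdiff3}\Rightarrow\eqref{sdiff1}$. The only (cosmetic) difference is that you obtain $L$ by extending the linear map whose graph is $\pLTan^{+}$ from $\pi(\pLTan^{+})$ to all of $\reals^{d}$, whereas the paper realizes the same extension explicitly by forming the $d$-dimensional space $\Lambda=V\oplus\pLTan^{+}$ with $V:=(R_{n}\oplus\pLTan^{+})^{\perp}$.
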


\begin{proof}[\emph{\textbf{Proof}}] 
\emph{Necessity}. Assume $\varphi$ strictly differentiable at $\hat x$ and denote by $L$ a strict differential of $\varphi$ at $x$.  By Proposition \ref{propparatangtrad-diff} the function $\varphi$ is continuous at $x$ and 
 \[
\pLTan^{+}(\graph (\varphi),\left(\hat x,\varphi\left( \hat x%
\right) \right) )\subset \graph(L).
\leqno(*)\] 
Therefore, $\pLTan^{+}(\graph (\varphi),\left( \hat x,\varphi\left( \hat x%
\right) \right) )$ does not contain verticals lines. 

 \emph{Sufficiency}. Posit $d':=\dim(\pLTan^{+}(\graph (\varphi),\left( \hat x,\varphi\left( \hat x
\right) \right) ))$,  $R_{n}:=\{(v,w)\in\reals^{d}\times\reals^{n}: v=0\}$ and $V:=(R_{n}\oplus \pLTan^{+}(\graph (\varphi),\left( \hat x,\varphi\left( \hat x
\right) \right) ))^{\perp}$ in $\reals^{d}\times\reals^{n}$. 
Since  $\pLTan^{+}(\graph (\varphi),\left( \hat x,\varphi\left( \hat x%
\right) \right) )$ does not contain verticals lines,  the vector space \[
\Lambda:=V\oplus\, \pLTan^{+}(\graph (\varphi),\left( \hat x,\varphi\left(\hat  x%
\right) \right) ) \] has dimension $d$ and it does not contain verticals lines; hence, there  exists a linear function $L:\reals^{d}\to \reals^{n}$ such that $\graph (L)=\Lambda$. Therefore, being $\varphi$ continuous at $\hat x$, by Proposition \ref{propparatangtrad-diff} we have that $\varphi$ is strictly differentiable at $\hat x$ and  $L$ is a strict differential of $\varphi$ at $\hat x$.
\end{proof}

\section{Grassmann Exterior algebra, limits of vector spaces and angles between  vector spaces}\label{sec-grassmann}

Let $\Lambda(\reals^{n})$ denote the graded Grassmann exterior algebra on $n$-dimensional Euclidean space $\reals^{n}$:
\begin{equation}
\Lambda(\reals^{n}):=\Lambda_{0}(\reals^{n})\oplus\Lambda_{1}(\reals^{n})\oplus\Lambda_{2}(\reals^{n})\oplus\Lambda_{3}(\reals^{n})\oplus\cdots\oplus\Lambda_{n}(\reals^{n})
\end{equation}
where $\Lambda_{0}(\reals^{n}):=\reals$ and $\Lambda_{k}(\reals^{n})$, $k=1,\dots,n$, is the $\binom n k$-dimensional vector space generated by linear combinations of simple $k$-vectors  $\land_{i=1}^{k}v_{i}$, where $\{v_{i}\}_{i=1}^{k}\subset\reals^{n}$. 

Euclidean inner product of $\reals^{n}$ induces an inner product on  $\Lambda(\reals^{n})$;   on simple $k$-vectors it is described by
\begin{equation}
\langle\land_{i=1}^{k}v_{i},\land_{i=1}^{k}w_{i}\rangle:=\det\big(\langle v_{i},w_{j}\rangle\big)_{i j}.
\end{equation}
With respect to the associated norm on $\Lambda(\reals^{n})$, a   simple $k$-vectors  $\land_{i=1}^{k}v_{i}$ has a non-null norm if and only if the $k$ vectors $\{v_{i}\}_{i=1}^{k}$ are linearly independent.

Let $\GG(\reals^{n},d)$ denote the set of all $d$-dimensional  vector (sub)spaces of $\reals^{n}$ and define $\GG(\reals^{n}):=\bigcup_{0\le d\le n}\GG(\reals^{n},d)$. 
If $d\ge1$, the \emph{angle} between two (non oriented) $d$-dimensional vector spaces $V$ and $W$ is a real number denoted by $\ang(V,W)$ and  well defined by
\begin{equation}\ang(V,W):=\arccos\left(\frac{|\langle\land_{i=1}^{d}v_{i},\land_{i=1}^{d}w_{i}\rangle|}{\|\land_{i=1}^{d}v_{i}\|\,\|\land_{i=1}^{d}w_{i}\|}\right).\label{angle}
\end{equation}
where $\{v_{i}\}_{i=1}^{d}$ and $\{w_{i}\}_{i=1}^{d}$ are arbitrary 
bases of $V$ and $W$, respectively.

 For every basis $\{v_{i}\}_{i=1}^{d}$ of a $d$-dimensional vector space $V$, one has:  
\begin{enumerate}
\item\labelpag{pre1grass}the norm $\|\land_{i=1}^{d}v_{i}\|$ is the $d$-dimensional elementary measure  of $d$-parallelepiped $\Para(\{v_{i}\}_{i=1}^{d}):=\{\sum_{i=1}^{d}\alpha_{i}v_{i}:\sum_{i=1}^{d}\alpha_{i}=1 \text{ and }0\le\alpha_{i}\le1\text{ for } 1\le i\le d\}$; 
\item\labelpag{pre3grass} $\dist(x,V)=\frac{\|(\land_{i=1}^{d}v_{i})\land x\|}{\|\land_{i=1}^{d}v_{i}\|}$ for every $x\in\reals^{n}$;

\item\labelpag{pre2grass} if $v_{i}'$ is the orthogonal projection of vector $v_{i}$ on a $d$-dimensional vector space $W$, then \[\|\land_{i=1}^{d}v_{i}'\|=\|\land_{i=1}^{d}v_{i}\|\cos(\ang(V,W)),\] 
i.e., $\cos(\ang(V,W))$ is the reduction factor 
for $d$-dimensional measure under orthogonal projection of $V$ on $W$.\,\footnote{To verify \eqref{pre2grass}, fix an orthonormal basis $\{w_{i}\}_{i=1}^{d}$ of $W$. Then observe that $v_{i}'=\sum_{k=1}^{d}\langle v_{i},w_{k}\rangle w_{k}$ for $1\le i\le d$, and $\cos(\ang(V,W))=\frac{|\langle\land_{i=1}^{d}v_{i},\land_{i=1}^{d}w_{i}\rangle|}{\|\land_{i=1}^{d}v_{i}\|}$. Hence $\|\land_{i=1}^{d}v_{i}'\|^{2}=\det\big(\langle\sum_{k=1}^{d}\langle v_{i},w_{k}\rangle w_{k}, \sum_{k=1}^{d}\langle v_{j},w_{k}\rangle w_{k}\rangle\big)_{ij}=\det\big(\sum_{k=1}^{d}\langle v_{i},w_{k}\rangle \langle v_{j},w_{k}\rangle\big)_{ij}=\det^{2}\big(\langle v_{i},w_{j}\rangle\big)_{ij}=\langle\land_{i=1}^{d}v_{i},\land_{i=1}^{d}w_{i}\rangle^{2}=\|\land_{i=1}^{d} v_{i}\|^{2}\cos^{2}(\ang(V,W))$.
}
 Hence $\ang(V,W)=\frac{\pi}{2}$ if and only if $V\cap W^{\bot}\ne\{0\}$. 

\end{enumerate}

\begin{proposition}[see {\textsc{Peano} \cite[(1887) thm.\,5 p.\,35, thm.\,7 p.\,39]{peano87} and \cite[(1887) thm.\,2 p.\,34, thm.\,2 p.\,36]{peano87}}]\label{propconv} If $V_{m}, V\in\GG(\reals^{n},d)$ and $d\ge1$, then  following properties are equivalent:
\begin{enumerate}
\item\labelpag{tre1} $V\subset \Li_{m\to\infty}V_{m}$,
\item\labelpag{tre2} $\lim_{m\to\infty}\ang(V_{m},V)=0$,
\item\labelpag{tre3} there are bases  $\{v_{i}\}_{i=1}^{d}$ and $\{v_{i}^{(m)}\}_{i=1}^{d}$ of $V$ and  $V_{m}$ respectively, such that $v_{i}=\lim_{m\to\infty}v_{i}^{(m)}$ for  $1\le i\le d$.
\end{enumerate}
\end{proposition}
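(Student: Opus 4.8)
The plan is to establish the cycle of implications $\eqref{tre1}\Rightarrow\eqref{tre3}\Rightarrow\eqref{tre2}\Rightarrow\eqref{tre1}$. The only ingredients needed are the elementary facts \eqref{pre2grass} (the cosine of $\ang$ is the volume–reduction factor of an orthogonal projection between $d$-dimensional spaces), continuity of the wedge product $(x_{1},\dots,x_{d})\mapsto\land_{i=1}^{d}x_{i}$, of the inner product and norm on $\Lambda(\reals^{n})$, and of $\arccos$ at $1$ (with $\arccos1=0$), together with Hadamard's inequality $\|\land_{i=1}^{d}u_{i}\|\le\prod_{i=1}^{d}\|u_{i}\|$. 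Note also that the quantity in \eqref{angle} is symmetric in its two arguments, so $\ang(V_{m},V)=\ang(V,V_{m})$.

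For $\eqref{tre1}\Rightarrow\eqref{tre3}$: choose an orthonormal basis $\{v_{i}\}_{i=1}^{d}$ of $V$ and, for each $m$, let $v_{i}^{(m)}$ be the orthogonal projection of $v_{i}$ onto $V_{m}$. Since $v_{i}\in V\subset\Li_{m}V_{m}$, definition \eqref{lowerlimit} gives $\|v_{i}^{(m)}-v_{i}\|=\dist(v_{i},V_{m})\to0$. By continuity of the wedge product $\land_{i=1}^{d}v_{i}^{(m)}\to\land_{i=1}^{d}v_{i}\neq0$, so for all large $m$ the vectors $v_{1}^{(m)},\dots,v_{d}^{(m)}$ are linearly independent, hence a basis of the $d$-dimensional space $V_{m}$; for the finitely many exceptional $m$ one replaces $\{v_{i}^{(m)}\}$ by any basis of $V_{m}$, which does not affect the limits. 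For $\eqref{tre3}\Rightarrow\eqref{tre2}$: from $v_{i}^{(m)}\to v_{i}$ and continuity one gets $\land_{i=1}^{d}v_{i}^{(m)}\to\land_{i=1}^{d}v_{i}$, so the quotient in \eqref{angle} evaluated on the bases $\{v_{i}^{(m)}\}$ and $\{v_{i}\}$ converges to $1$; since $\ang$ does not depend on the choice of bases and $\arccos$ is continuous with $\arccos1=0$, we conclude $\ang(V_{m},V)\to0$.

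The implication $\eqref{tre2}\Rightarrow\eqref{tre1}$ is the delicate one. Fix an orthonormal basis $\{e_{i}\}_{i=1}^{d}$ of $V$ and let $e_{i}'$ be the orthogonal projection of $e_{i}$ onto $V_{m}$. Applying \eqref{pre2grass} with $W=V_{m}$ and using $\|\land_{i=1}^{d}e_{i}\|=1$ gives $\|\land_{i=1}^{d}e_{i}'\|=\cos\ang(V,V_{m})\to1$. On the other hand $\|e_{i}'\|\le\|e_{i}\|=1$, so Hadamard's inequality $\|\land_{i=1}^{d}e_{i}'\|\le\prod_{i=1}^{d}\|e_{i}'\|$ forces $\prod_{i=1}^{d}\|e_{i}'\|\to1$ with every factor in $[0,1]$; hence $\|e_{i}'\|\to1$ for each $i$. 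Since $e_{i}-e_{i}'\perp V_{m}$, we get $\dist(e_{i},V_{m})^{2}=\|e_{i}\|^{2}-\|e_{i}'\|^{2}=1-\|e_{i}'\|^{2}\to0$. Finally, for an arbitrary $v=\sum_{i}\alpha_{i}e_{i}\in V$, the subadditivity and positive homogeneity of $u\mapsto\dist(u,V_{m})$ (valid because $V_{m}$ is a linear subspace) give $\dist(v,V_{m})\le\sum_{i}|\alpha_{i}|\,\dist(e_{i},V_{m})\to0$; by \eqref{lowerlimit} this means $v\in\Li_{m}V_{m}$, so $V\subset\Li_{m}V_{m}$.

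The crux is exactly $\eqref{tre2}\Rightarrow\eqref{tre1}$: the hypothesis $\ang(V_{m},V)\to0$ controls only the $d$-dimensional volume $\|\land_{i=1}^{d}e_{i}'\|$ of the projected orthonormal frame, not the individual lengths $\|e_{i}'\|$, and it is Hadamard's inequality together with the a priori bound $\|e_{i}'\|\le1$ that upgrades the volume estimate to the vector-by-vector convergence one needs. Everything else reduces to continuity of multilinear algebra and the base-times-height reading of the Grassmann norm recorded in \eqref{pre1grass}--\eqref{pre2grass}.
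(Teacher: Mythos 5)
Your proof is correct, and two of the three implications match the paper's in substance: for $\eqref{tre1}\Rightarrow\eqref{tre3}$ the paper extracts an approximating sequence $v_{i}^{(m)}\in V_{m}$ directly from the definition of $\Li$ rather than taking orthogonal projections, and $\eqref{tre3}\Rightarrow\eqref{tre2}$ is identical. Where you genuinely diverge is on the key implication $\eqref{tre2}\Rightarrow\eqref{tre1}$. The paper chooses, for each $m$, an orthonormal basis of $V_{m}$ whose simple $d$-vector has nonnegative inner product with that of a fixed orthonormal basis of $V$; since both $d$-vectors are unit vectors of $\Lambda_{d}(\reals^{n})$ and their inner product tends to $1$, they converge in $\Lambda_{d}(\reals^{n})$, and the distance formula \eqref{pre3grass} then yields $\dist(x,V_{m})\to\dist(x,V)$ for \emph{every} $x\in\reals^{n}$, whence the inclusion. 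You instead project an orthonormal frame of $V$ onto $V_{m}$, read off from \eqref{pre2grass} that the projected volume tends to $1$, and use Hadamard's inequality together with the a priori bound $\|e_{i}'\|\le1$ to upgrade the volume estimate to $\|e_{i}'\|\to1$ for each $i$, hence $\dist(e_{i},V_{m})\to0$ by Pythagoras, concluding by subadditivity and homogeneity of $\dist(\cdot,V_{m})$. Both arguments are complete. The paper's route has the advantage of producing the stronger intermediate statement that $\dist(\cdot,V_{m})$ converges pointwise to $\dist(\cdot,V)$ on all of $\reals^{n}$, which is exactly what is reused in Corollary \ref{coroconv1}; your route avoids the (mild but necessary) step of coherently orienting the bases of the $V_{m}$ and gives explicit vector-by-vector control of the projected frame via the Hadamard estimate.
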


\begin{proof}[\emph{\textbf{Proof}}] 

$\eqref{tre1}\Longrightarrow \eqref{tre3}$. Let $\{v_{i}\}_{i=1}^{d}$ be a basis of $V$. By \eqref{tre1}, for every $1\le i\le d$ there is a sequence $\{v_{i}^{(m)}\}_{m}$ such that $v_{i}^{(m)}\in V_{m}$ and $\lim_{m}v_{i}^{(m)}=v_{i}$. Hence, by continuity of exterior product, one obtains that 
\[
\lim_{m}\land_{i=1}^{d} v_{i}^{m}=\land_{i=1}^{d} v_{i}.
\leqno(*)\] 
Being $\{v_{i}\}_{i=1}^{d}$ a basis of $V$, one has $\|\land_{i=1}^{d} v_{i}\|\ne0$; therefore, $(*)$ implies that $\{v_{i}^{m}\}_{i=1}^{d}$ is a basis of $V_{m}$, since  $\|\land_{i=1}^{d} v_{i}^{(m)}\|\ne0$ eventually in $m$. 

$\eqref{tre3}\Longrightarrow\eqref{tre2}$.  Let $\{v_{i}\}_{i=1}^{d}$ and $\{v_{i}^{(m)}\}_{i=1}^{d}$ as in $\eqref{tre3}$.  By continuity of exterior product we have $(*)$.
Therefore, from Definition \eqref{angle} the required \eqref{tre2} follows.

$\eqref{tre2}\Longrightarrow\eqref{tre1}$. Let $\{v_{i}\}_{i=1}^{d}$ be an orthonormal basis of $V$; moreover, for every $m\in\naturals$, let $\{v_{i}^{(m)}\}_{i=1}^{d}$ be an orthonormal basis of $V_{m}$ such that $\langle \land_{i=1}^{d}v_{i}^{(m)}, \land_{i=1}^{d}v_{i}\rangle\ge0$. From orthonormality it follows that $\|\land_{i=1}^{d}v_{i}\|=\|\land_{i=1}^{d}v_{i}^{m}\|=1$; hence, by condition  \eqref{tre2}, we have that $\lim_{m\to\infty}\land_{i=1}^{d}v_{i}^{m}=\land_{i=1}^{d}v_{i}$. On the other hand, by continuity of exterior product, one obtains that
\[
\lim_{m}\dist(x,V_{m})=\lim_{m}\|(\land_{i=1}^{d}v_{i}^{m})\land x\|=\|(\land_{i=1}^{d}v_{i})\land x\|=\dist(x,V)
\]
for every $x\in\reals^{n}$. Therefore, for every $x\in V$, we have $
\lim_{m}\dist(x,V_{m})=0$, i.e., $x\in \Li_{m}V_{m}$.
\end{proof}

\begin{corollary}\label{coroconv1} Let $V_{m}, V$ be as in Proposition \ref{propconv}.  The following equivalence holds: 
\begin{equation}
 V\subset\Li_{m\to\infty}V_{m}\iff \dist(x,V)=\lim_{m\to\infty}\dist(x,V_{m}) \text{ for all } x\in\reals^{n}.
\label{tre4}
\end{equation}

\end{corollary}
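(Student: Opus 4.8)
The plan is to obtain both implications from Proposition \ref{propconv} together with the distance formula \eqref{pre3grass}; in fact the forward implication is essentially already carried out inside the proof of Proposition \ref{propconv}, so most of the work is citation.

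First I would dispatch the easy implication ``$\Leftarrow$''. Assuming $\dist(x,V)=\lim_{m\to\infty}\dist(x,V_{m})$ for every $x\in\reals^{n}$, pick any $x\in V$; then $\dist(x,V)=0$, so $\lim_{m}\dist(x,V_{m})=0$, which is precisely the defining condition \eqref{lowerlimit} for $x\in\Li_{m\to\infty}V_{m}$. Hence $V\subset\Li_{m\to\infty}V_{m}$. This uses nothing beyond the definition of the lower limit of sets.

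For ``$\Rightarrow$'', I would argue as follows. Assume $V\subset\Li_{m\to\infty}V_{m}$. By the equivalence $\eqref{tre1}\Longleftrightarrow\eqref{tre3}$ in Proposition \ref{propconv}, there are a basis $\{v_{i}\}_{i=1}^{d}$ of $V$ and bases $\{v_{i}^{(m)}\}_{i=1}^{d}$ of $V_{m}$ (for $m$ large) with $v_{i}^{(m)}\to v_{i}$ for each $1\le i\le d$. Continuity of the exterior product gives $\land_{i=1}^{d}v_{i}^{(m)}\to\land_{i=1}^{d}v_{i}$, and $\|\land_{i=1}^{d}v_{i}\|\ne0$ since $\{v_{i}\}_{i=1}^{d}$ is a basis; hence, applying \eqref{pre3grass} to both $V_{m}$ and $V$,
\[
\lim_{m\to\infty}\dist(x,V_{m})=\lim_{m\to\infty}\frac{\|(\land_{i=1}^{d}v_{i}^{(m)})\land x\|}{\|\land_{i=1}^{d}v_{i}^{(m)}\|}=\frac{\|(\land_{i=1}^{d}v_{i})\land x\|}{\|\land_{i=1}^{d}v_{i}\|}=\dist(x,V)
\]
for every $x\in\reals^{n}$, as desired. (Equivalently, one could simply quote the last displayed computation in the proof of Proposition \ref{propconv}, where exactly this limit is established from orthonormal approximating bases.)

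I do not expect any genuine obstacle here; the only point requiring care is that the approximating vectors $\{v_{i}^{(m)}\}_{i=1}^{d}$ really form a basis of $V_{m}$, so that the distance formula \eqref{pre3grass} may be applied with the correct dimension $d$ — and this is exactly what the implication $\eqref{tre1}\Longrightarrow\eqref{tre3}$ of Proposition \ref{propconv} guarantees. Everything else is a routine passage to the limit justified by continuity of $\land$ and by the non-vanishing of $\|\land_{i=1}^{d}v_{i}\|$.
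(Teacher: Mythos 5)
Your proposal is correct and follows essentially the same route as the paper's proof: the reverse implication is immediate from the definition of the lower limit, and the forward implication is obtained by producing bases $\{v_{i}^{(m)}\}_{i=1}^{d}$ of $V_{m}$ converging to a basis of $V$ (the paper extracts them directly from the definition of $\Li$, you cite the implication $\eqref{tre1}\Longrightarrow\eqref{tre3}$ of Proposition \ref{propconv}, which is the same argument) and then passing to the limit in the distance formula \eqref{pre3grass} via continuity of the exterior product. No gaps.
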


\begin{proof}[\emph{\textbf{Proof}}] $\stackrel{\eqref{tre4}}\Longleftarrow$: By the definition of the lower limit  $\Li$, it is obvious. $\stackrel{\eqref{tre4}}\Longrightarrow$: Let $\{v_{i}\}_{i=1}^{d}$ be a basis of $V$. By the definition of the lower limit of sets,  $V\subset\Li_{m\to\infty}V_{m}$ implies that for every $1\le i\le d$ there is a sequence $\{v_{i}^{(m)}\}_{m}$ such that $v_{i}^{(m)}\in V_{m}$ and $\lim_{m}v_{i}^{(m)}=v_{i}$. Hence, by the continuity of the exterior product and $\|\land_{i=1}^{d}v_{i}\|\ne0$, one has that 
\[
\lim_{m}\dist(x,V_{m})=\lim_{m}\frac{\|(\land_{i=1}^{d}v_{i}^{m})\land x\|}{\|\land_{i=1}^{d}v_{i}^{m}\|}=\frac{\|(\land_{i=1}^{d}v_{i})\land x\|}{\|\land_{i=1}^{d}v_{i}\|}=\dist(x,V)
\leqno(*)\]
for every $x\in\reals^{n}$.

\end{proof}

\begin{corollary}\label{coroconv2} Let $V_{m}, V$ be as in  Proposition \ref{propconv}. The following equivalence holds:
\begin{equation}
 V\subset\Li_{m\to\infty}V_{m}\iff
\Ls_{m\to\infty}V_{m}\subset V.
\label{tre6}
\end{equation}

\end{corollary}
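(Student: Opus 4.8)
The plan is to derive both implications of \eqref{tre6} from Corollary \ref{coroconv1} and Proposition \ref{propconv}, the only additional ingredient being the compactness of the Grassmannian $\GG(\reals^{n},d)$ regarded as a metric space under $\ang$ (one realizes $d$-dimensional subspaces through their orthonormal frames, which live in a compact product of unit spheres).

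For the implication $V\subset\Li_{m}V_{m}\Rightarrow\Ls_{m}V_{m}\subset V$ I would first invoke Corollary \ref{coroconv1} to get $\dist(x,V)=\lim_{m}\dist(x,V_{m})$ for every $x\in\reals^{n}$. Then, for $x\in\Ls_{m}V_{m}$, the defining property $\liminf_{m}\dist(x,V_{m})=0$ forces the already existing limit $\lim_{m}\dist(x,V_{m})$ to vanish, so that $\dist(x,V)=0$ and $x\in V$ since $V$ is closed; hence $\Ls_{m}V_{m}\subset V$. This direction is essentially immediate.

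For the converse $\Ls_{m}V_{m}\subset V\Rightarrow V\subset\Li_{m}V_{m}$ I would show that every subsequential limit of $\{V_{m}\}_{m}$ in $\GG(\reals^{n},d)$ must equal $V$, and then appeal to the standard fact that a sequence in a compact metric space all of whose convergent subsequences share the same limit converges to that limit. Indeed, if $V_{m_{k}}\to W$ in $\GG(\reals^{n},d)$, Proposition \ref{propconv}, implication $\eqref{tre2}\Rightarrow\eqref{tre1}$, gives $W\subset\Li_{k}V_{m_{k}}$; and since any $x$ with $\lim_{k}\dist(x,V_{m_{k}})=0$ satisfies $\liminf_{m}\dist(x,V_{m})=0$, we obtain $W\subset\Li_{k}V_{m_{k}}\subset\Ls_{m}V_{m}\subset V$, whence $W=V$ because $\dim W=\dim V=d$. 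Thus $\lim_{m}\ang(V_{m},V)=0$, and Proposition \ref{propconv}, implication $\eqref{tre2}\Rightarrow\eqref{tre1}$, yields $V\subset\Li_{m}V_{m}$.

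The main obstacle is the converse implication, and within it the single non-routine point is the compactness of $\GG(\reals^{n},d)$: one needs a subsequence of $\{V_{m}\}_{m}$ converging to a genuinely $d$-dimensional space rather than collapsing to a lower-dimensional limit, which is precisely what working with orthonormal frames secures. Everything else is bookkeeping with lower and upper Kuratowski limits along subsequences.
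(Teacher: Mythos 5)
Your proposal is correct and follows essentially the same route as the paper's: the forward implication via Corollary \ref{coroconv1}, and the converse via compactness of orthonormal frames together with the dimension count $\dim W=\dim V=d$ forcing the limiting subspace to equal $V$, then Proposition \ref{propconv}. The paper merely packages the converse as a contradiction at a single point $\hat x\in V\setminus\Li_{m\to\infty}V_{m}$ (extracting one convergent subsequence of frames along which $\dist(\hat x,V_{m})\to\alpha>0$) rather than as a statement about all subsequential limits in $\GG(\reals^{n},d)$, so the two arguments differ only in presentation.
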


\begin{proof}[\emph{\textbf{Proof}}] $\stackrel{\eqref{tre6}}\Longrightarrow$: Being $V\subset \Li_{m}V_{m}$, by \eqref{tre4} we have that 
\[
\lim_{m}\dist(x,V_{m})=\dist(x,V)
\leqno(*)\]
for every $x\in\reals^{n}$. Now, for $x\in \Ls_{m}V_{m}$, Definition of upper limit of sets entails $\liminf_{m}\dist(x,V_{m})=0$;  therefore, by $(*)$ one obtains $\dist(x, V)=0$, i.e., $x\in V$.

$\stackrel{\eqref{tre6}}\Longleftarrow$: Fix $\hat x\in V$ and choose an orthonormal basis $\{v_{i}^{(m)}\}_{i=1}^{d}$ of $V_{m}$. By absurd,  suppose $\hat x\not\in\Li_{m}V_{m}$, i.e., $\limsup_{m}\dist(\hat x,V_{m})=\alpha>0$. Then there exist $\{w_{i}\}_{i=1}^{d}\subset\reals^{n}$ and  an infinite subset  $N$ of $\naturals$ such that $\lim_{N\ni m\to\infty}\dist(\hat x,V_{m})=\alpha$ and $\lim_{N\ni m\to\infty}v_{i}^{m}=w_{i}$ for $1\le i\le d$. Now, let $W$ denote the $d$-dimensional vector space generated by the orthonormal basis $\{w_{i}\}_{i=1}^{d}$. First, by the continuity of exterior product we have
\[
\lim_{N\ni m\to\infty}\dist(\hat x,V_{m})=\lim_{N\ni m\to\infty}\|(\land_{i=1}^{d}v_{i}^{m})\land \hat x\|=\|(\land_{i=1}^{d}w_{i})\land \hat x\|=\dist(\hat x,W);
\]
hence $\hat x\not\in W$, since $\lim_{N\ni m\to\infty}\dist(\hat x,V_{m})=\alpha>0$. On the other hand, from `$\eqref{tre3}\Longrightarrow \eqref{tre1}$' of Proposition \ref{propconv} it follows that $W\subset \Li_{N\ni m\to\infty}V_{m}$; hence 
$W\subset \Li_{N\ni m\to\infty}V_{m}\subset \Ls_{m\to\infty}V_{m}\subset V
$; therefore, being both $W$ and $V$ d-dimensional vector spaces, we have $W=V$ in conflict with the fact that  $\hat x\in V\setminus  W$.
\end{proof}

Let us define continuity of set-valued functions. Let $A\subset \reals^{n}$ and $\hat x\in A$; as usual, a set-valued function $\varphi: A\to\calP(\reals^{n})$ is said to be \emph{lower} (resp. \emph{upper}) \emph{semicontinuous at} $\hat x$, whenever $\varphi(\hat x)\subset\Li_{A\ni x\to\hat{x}}\varphi(x)$ \footnote{i.e., the set $\{x\in A: B\cap\varphi( x)\ne\emptyset\}$ is open in $A$, for every open ball $B$.} (resp.\,$\Ls_{A\ni x\to\hat{x}}\varphi(x)\subset \varphi(\hat x)$ \footnote{i.e., the set $\{x\in A: \overline{B}\cap\varphi( x)\ne\emptyset\}$ is closed in $A$, for every closed ball $\overline{B}$.}); moreover $\varphi$ is said to be \emph{continuous} at $\hat x$, if $\varphi$ is both lower and upper semicontinuous. \footnote{Since $\Li_{A\ni x\to\hat{x}}\varphi(x)\subset \Ls_{x\ni A\to\hat{x}}\varphi(x)$, the continuity of $\varphi$ at $\hat x$ amounts to the equalities: $\varphi(\hat x)=\Li_{A\ni x\to\hat{x}}\varphi(x)=\Ls_{A\ni x\to\hat{x}}\varphi(x)$.}
The following well known elementary properties are useful:

\begin{enumerate}
\item\label{lower_seq} $\varphi$ is lower semicontinuous at $\hat{x}$ if and only if
\[\varphi(\hat x)\subset\Li_{m\to\infty}\varphi(x_{m}) \text{ for every sequence }\{x_{m}\}_{m}\subset A\text{ converging to }\hat{x};
\]
\item\label{upper_seq} $\varphi$ is upper semicontinuous at $\hat{x}$ if and only if
\[\Ls_{m\to\infty}\varphi(x_{m})\subset \varphi(\hat{x}) \text{ for every sequence }\{x_{m}\}_{m}\subset A\text{ converging to }\hat{x};\]
\item\label{cont_seq} $\varphi$ is continuous at $\hat{x}$ if and only if
\[\Ls_{m\to\infty}\varphi(x_{m})\subset \varphi(\hat{x})\subset\Li_{m\to\infty}\varphi(x_{m}) \text{ for every }\{x_{m}\}_{m}\subset A\text{ converging to }\hat{x}.
\]

\end{enumerate}

By \eqref{lower_seq}-\eqref{cont_seq}, Proposition \ref{propconv}  and Corollary \ref{coroconv2} can be restated in terms of continuity of   vector-space-valued functions.

\begin{theorem}\label{theoconv} Let $A$ be a subset of $\reals^{n}$ and let $\tau:A\to\GG(\reals^{n},d)$ be a vector-space-valued function with $d\ge1$. For every $x\in A$, the following properties are equivalent:
\begin{enumerate}
\item $\tau$ is lower semicontinuous at $x$, \item  $\tau$ is  upper semicontinuous at $x$,
\item   $\tau$ is continuous at $x$,

\item $\lim_{A\ni y\to x}\ang(\tau(y),\tau(x))=0$. \hfill$\Box$
\end{enumerate}
 
\end{theorem}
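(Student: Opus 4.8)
The plan is to prove the cycle of implications $(1)\Rightarrow(4)$, $(4)\Rightarrow(2)$, $(2)\Rightarrow(4)$ and then reduce $(3)$ to the others, leaning entirely on the sequential reformulations \eqref{lower_seq}--\eqref{cont_seq} of semicontinuity together with Proposition \ref{propconv} and Corollary \ref{coroconv2}. Throughout, fix $x\in A$ and write $V:=\tau(x)$, $V_m:=\tau(x_m)$ for an arbitrary sequence $\{x_m\}_m\subset A$ converging to $x$; note every $V_m$ and $V$ lies in $\GG(\reals^n,d)$ with $d\ge1$, so Proposition \ref{propconv} and Corollary \ref{coroconv2} apply verbatim.

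First I would show $(1)\Leftrightarrow(4)$. By \eqref{lower_seq}, lower semicontinuity at $x$ says precisely that $V\subset\Li_{m\to\infty}V_m$ for every sequence $x_m\to x$ in $A$. By the equivalence $\eqref{tre1}\Leftrightarrow\eqref{tre2}$ of Proposition \ref{propconv}, $V\subset\Li_{m\to\infty}V_m$ is the same as $\lim_{m\to\infty}\ang(V_m,V)=0$. Hence $\tau$ is lower semicontinuous at $x$ if and only if $\ang(\tau(x_m),\tau(x))\to0$ for every sequence $x_m\to x$ in $A$, and the latter is exactly the sequential formulation of $\lim_{A\ni y\to x}\ang(\tau(y),\tau(x))=0$, i.e.\ condition $(4)$.

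Next, $(4)\Leftrightarrow(2)$. By \eqref{upper_seq}, upper semicontinuity at $x$ says $\Ls_{m\to\infty}V_m\subset V$ for every sequence $x_m\to x$ in $A$. By Corollary \ref{coroconv2} (equivalence \eqref{tre6}), for $d$-dimensional spaces one has $V\subset\Li_{m\to\infty}V_m$ if and only if $\Ls_{m\to\infty}V_m\subset V$; so upper semicontinuity at $x$ is equivalent to $V\subset\Li_{m\to\infty}V_m$ for every such sequence, which by the previous paragraph is equivalent to $(4)$. This is the step where the dimensional rigidity really matters: for general set-valued maps $\Li$ and $\Ls$ containments are genuinely different, and it is only because both $V_m$ and $V$ are $d$-dimensional vector spaces that Corollary \ref{coroconv2} forces the two one-sided conditions to coincide — so I expect this to be the conceptual crux, though the work has already been done in Corollary \ref{coroconv2}.

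Finally, $(3)$ is by definition the conjunction of $(1)$ and $(2)$ (lower plus upper semicontinuity), and since we have just shown $(1)\Leftrightarrow(4)\Leftrightarrow(2)$, each of these is equivalent to $(3)$ as well. This closes the chain $(1)\Leftrightarrow(2)\Leftrightarrow(3)\Leftrightarrow(4)$ and completes the proof. No delicate estimates are needed: the entire argument is a bookkeeping exercise translating the set-limit statements of Section \ref{sec-grassmann} into the semicontinuity language of \eqref{lower_seq}--\eqref{cont_seq}.
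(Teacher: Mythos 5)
Your proposal is correct and is essentially the paper's own argument: the paper states Theorem \ref{theoconv} as a direct restatement of Proposition \ref{propconv} and Corollary \ref{coroconv2} via the sequential characterizations \eqref{lower_seq}--\eqref{cont_seq}, which is exactly the translation you carry out. You have merely written out explicitly the bookkeeping the paper leaves implicit (the $\Box$ at the end of the statement signals that no further argument is given), including the correct observation that the $d$-dimensionality is what collapses the lower and upper conditions into one another via Corollary \ref{coroconv2}.
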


\section{Four-cones Coincidence Theorem: local and global version}\label{sec-main}

The geometry of manifolds was originated by \textsc{Riemann}'s Habilitationsschrift (1854) \emph{\"Uber die Hypothesen welche der Geometrie zu Grunde liegen} \cite[(1878) p.\, 272-287]{riemann}.
 Today axiomatic formulation of manifolds by coordinate systems and regular atlas  was presented  by \textsc{Veblen} and \textsc{Whitehead} in \cite[(1931)]{veblen31,veblen}  
 and was elaborated in a series of celebrated works by  \textsc{Whitney} in the 1930's. There are various kinds of finite dimensional manifolds; all are  topological, i.e., they are locally homeomorphic to Euclidean spaces.  In the sequel we will consider (sub)manifolds of Euclidean spaces that are $C^{1}$ smooth.

\begin{definition}\label{defsot} A  non-empty subset  $S$ of $\reals^{n}$  is said to be a $C^{1}$-\emph{manifold of $\reals^{n}$ at a point  }$ x\in S$, if there are an open neighborhood $\Omega$ of $x$ in $\reals^{n}$, an affine subspace $H$ of $\reals^{n}$ and a $C^{1}$-diffeomorphism  from $\Omega$ onto another open set of $\reals^{n}$ such that 
\begin{equation}
\xi(S\cap\Omega)=\xi(\Omega)\cap H.\label{eqsot}
\end{equation}
 The dimension of $H$ is said to be the \emph{dimension of $S$ at $x$} and it is denoted by $\dim(S,x)$. 
 \end{definition}

 A set $S$ is said to be  a $C^{1}$-\emph{manifold of }$\reals^{n}$, if it is  a $C^{1}$-manifold of $\reals^{n}$ at every point.  
 If 
 the dimension $d=\dim(S,x)$ does  not depend on $x\in S$, then $S$ is said to be  a \emph{$d$-dimensional $C^{1}$-manifold}. \footnote{If $S$ is  a $C^{1}$-manifold of $\reals^{n}$, then $S_{i}:=\{x\in S: \dim(S,x)=i\}$ is an $i$-dimensional $C^{1}$-manifold (if non-empty), and, for $0\le i\ne j\le n$, $S_{i}$ and $S_{j}$ are \emph{separated} (i.e. $S_{i}\cap\overline{S_{j}}=S_{j}\cap\overline{S_{i}}=\emptyset$).}

 Elementary and well known facts on arbitrary $C^{1}$-manifolds are resumed in the following proposition.

\begin{proposition}\label{proppre} Let $S$ be a $C^{1}$-manifold of $\reals^{n}$ and let $x$ be an arbitrary point of $S$. Then the following four properties hold:
\begin{enumerate}
\item\labelpag{pre0} $S$ is a topological manifold (hence, a locally compact set);

\item\labelpag{pre2}  $\Tan^{+}(S,x)$ is a vector space 
and
\[\dd\xi(x)\big(\Tan^{+}(S,x)\big)=H-\xi(x)  \text{ and } \dim(S,x)=\dim(\Tan^{+}(S,x))
\]
where $\xi$ and $H$ are as in Definition $\ref{defsot}$  and $\dd\xi(x)$ denotes the differential of $\xi$ at $x$; 
\item\labelpag{pre3}  all four tangent cones coincide, i.e.
\[\pTan^{+}(S,x)=\Tan^{+}(S,x)=\Tan^{-}(S,x)=\pTan^{-}(S,x);\,
\footnote{The relevant equality $\Tan^{+}(S,x)=\pTan^{-}(S,x)$ was proved by \textsc{Clarke} \cite[(1975), pp.\,254-256]{clarke75} for $C^{1}$-manifolds and convex sets.}
\]

\item\labelpag{pre4} the tangent vector space $\Tan^{+}(S,x)$ varies  continuously, i.e., the map $z\mapsto \Tan^{+}(S,z)$ is continuous on $S$, or, equivalently, for every $z\in S$
 \[\quad\qquad\qquad\Li_{S\ni y\to z}\Tan^{+}(S,y)\subset \Tan^{+}(S,z)\subset\Ls_{S\ni y\to z}\Tan^{+}(S,y). \quad \qquad\qquad\qquad\Box
 \]
\end{enumerate}
\end{proposition}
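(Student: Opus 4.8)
The plan is to reduce, via the chart $\xi$ of Definition~\ref{defsot}, to the trivial case of a relatively open piece of an affine subspace, where all four cones are read off by inspection, and then to transport the computation back through $\xi$, exploiting that a $C^{1}$-diffeomorphism is strictly differentiable with invertible differential. Since the four tangent cones at a point depend only on the germ of the set there, I work with $S\cap\Omega$ in place of $S$; composing $\xi$ with translations in source and target I may assume $x=0$ and $\xi(0)=0$, so that $H$ is a linear subspace, $V:=H=H-\xi(x)$, $d:=\dim H=\dim(S,x)$, and $E:=\xi(S\cap\Omega)=\xi(\Omega)\cap H$ is a relatively open neighbourhood of $0$ in $V$. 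Property (pre0) is then immediate: $\xi$ restricts to a homeomorphism of $S\cap\Omega$ onto the open subset $E$ of $V\cong\reals^{d}$, so $S$ is locally Euclidean, hence a topological manifold and in particular locally compact.

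Next comes the model computation: $\pTan^{-}(E,0)=\Tan^{-}(E,0)=\Tan^{+}(E,0)=\pTan^{+}(E,0)=V$. For the lower inclusion $V\subset\pTan^{-}(E,0)$, if $v\in V$, $y\in E\subset V$ and $\lambda>0$ then $y+\lambda v\in V$, and $y+\lambda v\in E$ as soon as $y$ is close to $0$ and $\lambda$ small, so $\dist(y+\lambda v,E)=0$ and $v\in\pTan^{-}(E,0)$. For the upper bound, for every $y\in E$ and $\lambda>0$ the set $\tfrac1\lambda(E-y)$ lies in the closed set $V$ (as $E\subset V$ and $y\in V$), hence by \eqref{para} $\pTan^{+}(E,0)\subset V$; now chain \eqref{4cones}.

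The transport rule is the heart of the matter: each of the four cones $\pTan^{-},\Tan^{-},\Tan^{+},\pTan^{+}$ at $0$ of $\xi(S\cap\Omega)$ equals the $\dd\xi(0)$-image of the corresponding cone at $0$ of $S\cap\Omega$. For $\Tan^{\pm}$ use \eqref{lowerseq}, \eqref{upperseq}: if $x_{m}\in S\cap\Omega$, $\lambda_{m}\to0^{+}$ and $\tfrac{x_{m}}{\lambda_{m}}\to v$ (with $v\ne0$, the case $v=0$ being trivial since $0$ lies in every cone), then $x_{m}\to0$, and writing $\tfrac{\xi(x_{m})}{\lambda_{m}}=\tfrac{\xi(x_{m})-\dd\xi(0)x_{m}}{\|x_{m}\|}\cdot\tfrac{\|x_{m}\|}{\lambda_{m}}+\dd\xi(0)\big(\tfrac{x_{m}}{\lambda_{m}}\big)$ the first summand tends to $0$ by differentiability of $\xi$ at $0$ (the factor $\tfrac{\|x_{m}\|}{\lambda_{m}}\to\|v\|$ being bounded) and the second to $\dd\xi(0)v$; so $\tfrac{\xi(x_{m})}{\lambda_{m}}\to\dd\xi(0)v$. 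This gives the inclusion $\dd\xi(0)(\Tan^{\pm}(S\cap\Omega,0))\subset\Tan^{\pm}(E,0)$, and the $C^{1}$-diffeomorphism $\xi^{-1}$ (with $\xi^{-1}(0)=0$ and $\dd\xi^{-1}(0)=\dd\xi(0)^{-1}$) yields the reverse inclusion. For $\pTan^{\pm}$ one argues identically from \eqref{clarkeseq}, \eqref{paraseq}, now invoking the strict differentiability of $\xi$ at $0$ (valid since $\xi$ is $C^{1}$ on the open set $\Omega$), namely $\tfrac{\xi(x_{m})-\xi(y_{m})-\dd\xi(0)(x_{m}-y_{m})}{\|x_{m}-y_{m}\|}\to0$ whenever $x_{m},y_{m}\to0$, whence $\tfrac{\xi(x_{m})-\xi(y_{m})}{\lambda_{m}}\to\dd\xi(0)v$ when $\tfrac{x_{m}-y_{m}}{\lambda_{m}}\to v$; again $\xi^{-1}$ closes the loop. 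Combining with the model computation,
\[
\pTan^{-}(S,0)=\Tan^{-}(S,0)=\Tan^{+}(S,0)=\pTan^{+}(S,0)=\dd\xi(0)^{-1}(V),
\]
a vector space on which $\dd\xi(0)$ restricts to an isomorphism onto $V=H-\xi(x)$, of dimension $d=\dim(S,x)$; this is exactly (pre2) and (pre3).

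For (pre4), continuity is local, so fix $z\in S$ and a chart $(\Omega,H,\xi)$ at $z$. Every $y\in S\cap\Omega$ satisfies $\xi(y)\in\xi(\Omega)\cap H\subset H$, and the same chart exhibits $S$ as a $C^{1}$-manifold at $y$ with the same $H$; hence by (pre2), $\Tan^{+}(S,y)=\dd\xi(y)^{-1}(H-\xi(y))=\dd\xi(y)^{-1}(V)$, where $V$ is the fixed direction space of the affine subspace $H$ and $\dim=d$ is constant on $S\cap\Omega$. As $y\to z$ in $S$, continuity of the differential of the $C^{1}$ map $\xi$ gives $\dd\xi(y)\to\dd\xi(z)$, hence $\dd\xi(y)^{-1}\to\dd\xi(z)^{-1}$; applying $\dd\xi(y)^{-1}$ to a fixed basis of $V$ and invoking Proposition~\ref{propconv} (equivalently Theorem~\ref{theoconv}) we obtain $\Tan^{+}(S,y)\to\Tan^{+}(S,z)$ in $\GG(\reals^{n},d)$, i.e.\ $z\mapsto\Tan^{+}(S,z)$ is continuous, which is (pre4). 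I expect the transport rule for the two paratangent cones to be the only genuinely delicate point: it is precisely there that strict — not merely Fr\'echet — differentiability of the charts is needed, and one must check that the auxiliary sequences stay in $\Omega$ and converge to the base point so that the strict-differentiability estimate applies.
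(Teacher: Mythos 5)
Your proof is correct and follows essentially the same route as the paper's: reduce via the chart to a relatively open piece of a vector subspace (where all four cones are computed to be the subspace itself), transport the cones through the $C^{1}$-diffeomorphism using its strict differentiability (the paper packages this as the ``local'', ``stable by diffeomorphisms'' and ``fixes vector spaces'' properties, citing the Cyrenian lemma where you argue by hand), and for \eqref{pre4} pull back a fixed basis of $H-\xi(x)$ by $\big(\dd\xi(y)\big)^{-1}$ and invoke Proposition~\ref{propconv} together with Corollary~\ref{coroconv2}. The only (trivial) omission is the zero-dimensional case of \eqref{pre4}, which the paper treats separately but which is immediate since $z$ is then isolated in $S$.
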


\begin{proof}[\emph{\textbf{Proof}}]

 All the four discussed cones  share the following three \emph{basic  properties}. Let $\Tan$ denote any one of the four tangent cones $\Tan^{+}$,  $\Tan^{-}$,  $\pTan^{+}$ and  $\pTan^{-}$, then 
\begin{enumerate}
\item\labelpag{localstable} they are \emph{local}, i.e., 
\[\Tan(S\cap \Omega, x)= \Tan(S, x)\] 
for every  neighborhood $\Omega$ of $x$ in $\reals^{n}$;
\item\labelpag{diffstable} they are \emph{stable by diffeomorphisms}, i.e., 
\[\Tan(\xi(S\cap\Omega), \xi(x))=\dd\xi(x)( \Tan(S\cap\Omega, x)) \]
 for every open neighborhood $\Omega$ of $x$ and for every $C^{1}$-diffeomorphism from $\Omega$ to another open set of $\reals^{n}$; \footnote{The  equality in  $\eqref{diffstable}$ is a consequence of the description  of $\Tan$ in terms of sequences (see \eqref{upperseq}, \eqref{lowerseq}, \eqref{paraseq} and \eqref{clarkeseq}), since $\xi$ is $C^{1}$-diffeomorphism and, consequently, by Cyrenian lemma \ref{cireneoC1} one has 
\[\lim_{m\to\infty}\frac{x_{m}-y_{m}}{\lambda_{m}}=v \iff \lim_{m\to\infty}\frac{\xi(x_{m})-\xi(y_{m})}{\lambda_{m}}=\dd\xi(x)(v)
\leqno(*)\]
for $\lambda_{m}\to0^{+}$, $v\in\reals^{n}$, $\{y_{m}\}_{m},\{x_{m}\}_{m} \subset S\cap\Omega$ with $y_{m}\to x$.}

\item\labelpag{vectstable} they  \emph{fix vector spaces}, i.e., 
\[\Tan(V, 0)= V\]
for every  vector subspace $V$ of $\reals^{n}$.
\end{enumerate}
To prove both \eqref{pre2} and \eqref{pre3}, fix $x\in S$ and let $(\Omega, H, \xi)$ 
be as in Definition \ref{defsot}.  The following equalities hold: 
\[
\Tan(\xi(S\cap\Omega), \xi(x))=\dd\xi(x)\big(\Tan(S\cap\Omega,x)\big)=\dd\xi(x)\big(\Tan(S,x)\big).
\leqno(*)\]
The first and the second equality of $(*)$ are due to  $\eqref{diffstable}$ and $\eqref{localstable}$, respectively.   On the other hand
\[
\Tan(\xi(\Omega)\cap H,\xi(x))=\Tan(H, \xi(x))=\Tan(H-\xi(x),0 )=H-\xi(x),
\leqno(**)\]
where the equalities are due to $\eqref{localstable}$, $\eqref{diffstable}$ and $\eqref{vectstable}$, respectively.
 Finally, combining $(*)$ and $(**)$ with \eqref{eqsot}, we have \eqref{pre2} and \eqref{pre3}. Besides, the tangent cone $\Tan^{+}(S,x)$ is a vector subspace of $\reals^{n}$ having the same dimension of $S$ at $x$, because it is the preimage of the vector space $H-\xi(x)$ under the linear isomorphism $\dd\xi(x)$.
%
%
%
%
%
%
%
%

To verify \eqref{pre4}, fix $x\in S$ and let $(\Omega, H, \xi)$ 
be as in Definition \ref{defsot}. From \eqref{pre2} it follows that  
\[\dd\xi(y)\big(\Tan^{+}(S,y)\big)=H-\xi(y)=H-\xi(x)\text{ for every }y\in S\cap\Omega.
\leqno(***)\]
 \emph{$1^{st}$ case: $\dim(S,x)=0$}.  Obviously $\eqref{pre4}$ holds, since   $x$ is an isolated point of $S$.    \emph{$2^{st}$ case: $d:=\dim(S,x)\ge1$}. Choose a basis $\{w_{i}\}_{i=1}^{d}$ of the vector space $W:=H-\xi(x)$ and a sequence $\{y_{m}\}_{m}\subset S$ converging to $x$. Since $\Omega$ is an open neighborhood of $x$,  there is a natural number $\bar m$ such that $y_{m}\in S\cap\Omega$ for every natural number $m\ge\bar m$. For every $y_{m}$ with $m\ge \bar m$, by $(***)$ define a basis $\{v_{i}^{(m)}\}_{i=1}^{d}$ of the $d$-dimensional vector space $\Tan^{+}(S,y_{m})$ by
\[v_{i}^{m}:=\big(\dd\xi(y_{m})\big)^{-1}(w_{i})\text{ for }1\le i\le d
\]
and, analogously,  define a basis $\{v_{i}\}_{i=1}^{d}$ of the $d$-dimensional vector space $\Tan^{+}(S,x)$ by
\[v_{i}:=\big(\dd\xi(x)\big)^{-1}(w_{i})\text{ for }1\le i\le d
\]
 Since $\xi$ is $C^{1}$- diffeomorphism, the map $y\mapsto\big(\dd\xi(y)\big)^{-1}$ is continuous. Hence 
 \[\lim_{m\to\infty}v_{i}^{m}=v_{i} \text{ for }1\le i\le d.
 \]
 Therefore, by Proposition \ref{propconv}, $\Tan^{+}(S,x)\subset \Li_{m\to\infty}\Tan^{+}(S,y_{m})$; consequently, by Corollary \ref{coroconv2}, $\Li_{m\to\infty}\Tan^{+}(S,y_{m})\subset\Tan^{+}(S,x)$. Hence Theorem \ref{theoconv} entails \eqref{pre4}. \end{proof}

\begin{lemma}\label{corosdiffsequel} Let $F$ be a  subset of $\reals^{n}$ such that $F\subset\der(F)$ and 
\begin{equation}
e_{n}\not\in\pLTan^{+}(F, x))\text{ for every }x\in F.
 \label{corosdiffseq1}
\end{equation}
Let $\hat x\in F$. Then  there exist $\varepsilon\in\reals_{++}$, $A\subset\reals^{n-1}$ with $ A\subset\der(A)$  and  there exists a  function $\varphi:A\to\reals$ strictly differentiable on $A$ such that 
\begin{equation}
\label{corodiffseq2}
\graph(\varphi)=F\cap \B_{\varepsilon}(\hat x).
\end{equation} 
Moreover, if $F$ is a $d$-dimensional topological manifold (resp. locally compact at $\hat x$),  then  $A$ is a $d$-dimensional topological manifold (resp. locally compact at $\hat t$, where $\hat t$ is the element of $A$ such $\hat{x}=(\hat{t}, \varphi(\hat{t}))$).  \end{lemma}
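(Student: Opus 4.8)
The essential input is Proposition~\ref{coll7} (Bouligand's local graph representation): since $e_n\notin\pLTan^+(F,x)\supset\pTan^+(F,x)$, a fortiori $e_n\notin\pTan^+(F,\hat x)$, so there is $\varepsilon\in\reals_{++}$ such that the projection $p(x_1,\dots,x_{n-1},x_n)=(x_1,\dots,x_{n-1})$ is injective on $F\cap\B_\varepsilon(\hat x)$, and the resulting function $\varphi\colon A:=p(F\cap\B_\varepsilon(\hat x))\to\reals$ is Lipschitz with $\graph(\varphi)=F\cap\B_\varepsilon(\hat x)$. This gives \eqref{corodiffseq2} immediately, and it only remains to upgrade Lipschitz to strictly differentiable and to transfer the manifold / local compactness properties from $F$ to $A$.

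\textbf{Strict differentiability of $\varphi$.} The plan is to invoke Corollary~\ref{corosdiff}: $\varphi$ is strictly differentiable at each $\hat t\in A$ iff $\varphi$ is continuous at $\hat t$ and $\pLTan^+(\graph(\varphi),(\hat t,\varphi(\hat t)))$ contains no vertical line. Continuity follows from the Lipschitz property just obtained. For the vertical-line condition, fix $\hat t\in A$ and let $x:=(\hat t,\varphi(\hat t))\in F$; since $\graph(\varphi)=F\cap\B_\varepsilon(\hat x)$ coincides with $F$ near $x$ and both the upper paratangent cone and its linear hull are local (property \eqref{localstable} applies to $\pTan^+$, hence to $\pLTan^+$), we get $\pLTan^+(\graph(\varphi),x)=\pLTan^+(F,x)$, which by hypothesis \eqref{corosdiffseq1} does not contain $e_n=(0,\dots,0,1)$ and therefore — being a vector subspace of $\reals^{n-1}\times\reals$ — contains no vertical line at all (a vector subspace containing $(0,w)$ with $w\ne0$ would contain the whole vertical line, in particular $e_n$). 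Thus Corollary~\ref{corosdiff} applies at every point of $A$, so $\varphi$ is strictly differentiable on $A$, and in particular $A\subset\der(A)$ since $\hat x\in\der(F)$ forces $\hat t=p(\hat x)\in\der(A)$ by continuity of $p$ and its inverse $t\mapsto(t,\varphi(t))$ (a homeomorphism because $p$ is continuous and $\varphi$ is Lipschitz, hence continuous).

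\textbf{Transfer of the manifold structure.} The map $\pi\colon t\mapsto(t,\varphi(t))$ is a homeomorphism from $A$ onto $F\cap\B_\varepsilon(\hat x)$ with continuous inverse $p$. Hence topological properties transfer verbatim: if $F$ is locally compact at $\hat x$ then, shrinking $\varepsilon$ if necessary so that $\overline{\B}_{\varepsilon}(\hat x)\cap F$ is compact, $A$ contains $p(\overline{\B}_\varepsilon(\hat x)\cap F)$, a compact neighborhood of $\hat t$ in $A$, so $A$ is locally compact at $\hat t$; and if $F$ is a $d$-dimensional topological manifold then $F\cap\B_\varepsilon(\hat x)$, being open in $F$, is one as well, and so is its homeomorphic image $A$, still of dimension $d$. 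This completes the proof.

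\textbf{Main obstacle.} The routine content (graph representation, homeomorphism, transfer of topology) is handed to us by Proposition~\ref{coll7}; the one point requiring care is the locality step ensuring $\pLTan^+(\graph(\varphi),(\hat t,\varphi(\hat t)))=\pLTan^+(F,(\hat t,\varphi(\hat t)))$ so that hypothesis \eqref{corosdiffseq1}, stated for all $x\in F$, can be applied to the cut-down set $\graph(\varphi)=F\cap\B_\varepsilon(\hat x)$ at each of its points — and then reading off from ``$e_n\notin\pLTan^+$'' that there is no vertical line, which is the trivial observation that a linear subspace missing $e_n$ cannot contain any nonzero vertical vector. Once that is in place, Corollary~\ref{corosdiff} does all the analytic work.
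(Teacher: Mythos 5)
Your proof is correct and follows essentially the same route as the paper's: Proposition \eqref{coll7} for the local Lipschitz graph representation, locality of the upper paratangent cone to transfer hypothesis \eqref{corosdiffseq1} to $\graph(\varphi)$, the observation that a linear subspace avoiding $e_{n}$ contains no vertical line, Corollary \ref{corosdiff} for strict differentiability, and the homeomorphism $t\mapsto(t,\varphi(t))$ for the topological transfer. One tiny slip: to conclude $A\subset\der(A)$ you should invoke $F\subset\der(F)$ at \emph{every} point of $F\cap\B_{\varepsilon}(\hat x)$ (as the paper does), not merely $\hat x\in\der(F)$, but the homeomorphism argument you give extends verbatim to all points.
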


\begin{proof}[\textbf{\emph{Proof}}] By \eqref{coll7} there are $\varepsilon\in\reals_{++}$, $A\subset\reals^{n-1}$ and $\varphi:A\to\reals$ such that 
\[\varphi \text{ is continuous and }
\graph(\varphi)=F\cap \B_{\varepsilon}(\hat x).
\leqno(*1)\]
Since  $\B_{\varepsilon}(\hat x)$ is an open set, we have
 \[
 \pLTan^{+}(\graph(\varphi),(t,\varphi(t)))=\pLTan^{+}(F,(t,\varphi(t))\text{  for every }t\in A;
 \leqno(*2)\]
therefore, by \eqref{corosdiffseq1}
\[ \pLTan^{+}(\graph(\varphi),(t,\varphi(t)))\text{ does not contain vertical lines}
 \leqno(*3)\]
for every point $t\in A':=\{t\in A: e_{n}\not \in \pLTan^{+}(F, (t,\varphi(t)))$.   On the other hand, being $F\subset \der(F)$,   $\graph(\varphi)$ has no isolated point; therefore 
 $A\subset\der(A)$.  Now, by Corollary \ref{corosdiff} we have that $\varphi$ is strictly differentiable on $A'$, as required. Finally, if $F$ is locally compact at $\hat{x}$ (resp. a $d$-dimensional topological manifold), then 
   the set $A$ is locally compact at $\hat{t}$ (resp. a $d$-dimensional topological manifold), since it is homeomorphic to  $\graph(\varphi)=F\cap \B_{\varepsilon}(\hat x)$.
\end{proof}

 \begin{lemma}\label{lemUno-strict}
Let $A\subset\reals^{d}$ with $A\subset\der(A)$ and let $\varphi:A\to\reals^{n}$ be strictly differentiable.  For every $x\in A$, the following properties hold:
\begin{enumerate}
\item\labelpag{lem1} $\pTan^{+}\big(\graph(\varphi),(x,\varphi(x))\big)=\{(v,L(v)): v\in \pTan^{+}(A,x)\}$,

\item\labelpag{lem2} $\pTan^{-}\big(\graph(\varphi),(x,\varphi(x))\big)=\{(v,L(v)): v\in \pTan^{-}(A,x)\}$,

\end{enumerate}
where $L$ denote  a strict  differential of $\varphi$ at $x$. 

\end{lemma}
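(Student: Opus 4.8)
The plan is to reduce both identities to the Cyrenian lemma (Lemma~\ref{cireneoC1}) together with the sequential descriptions \eqref{paraseq} and \eqref{clarkeseq} of the upper and lower paratangent cones. I will treat \eqref{lem1} and \eqref{lem2} in parallel, since the only difference is whether a single base-point sequence $\{y_m\}_m$ is produced (existentially, for $\pTan^+$) or prescribed arbitrarily (universally, for $\pTan^-$); the underlying mechanism converting convergence of difference quotients in the domain to convergence in the graph is the same in both cases.

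First I would fix $x\in A$, abbreviate $\hat z:=(x,\varphi(x))$, and observe the trivial set identification of the graph near $\hat z$: a point of $\graph(\varphi)$ has the form $(t,\varphi(t))$ with $t\in A$, so any sequence in $\graph(\varphi)$ converging to $\hat z$ is of the form $\{(t_m,\varphi(t_m))\}_m$ with $t_m\in A$, $t_m\to x$ (continuity of $\varphi$ at $x$, which holds since $\varphi$ is strictly differentiable there). This reduces the study of $\frac{1}{\lambda_m}\big((t_m,\varphi(t_m))-(s_m,\varphi(s_m))\big)$ to the pair consisting of $\frac{t_m-s_m}{\lambda_m}$ and $\frac{\varphi(t_m)-\varphi(s_m)}{\lambda_m}$.

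For the inclusion ``$\subset$'' in \eqref{lem1}: given $(v,w)\in\pTan^+(\graph(\varphi),\hat z)$, by \eqref{paraseq} there are $\lambda_m\to0^+$ and sequences $\{(t_m,\varphi(t_m))\}_m$, $\{(s_m,\varphi(s_m))\}_m$ in $\graph(\varphi)$ converging to $\hat z$ with $\frac{t_m-s_m}{\lambda_m}\to v$ and $\frac{\varphi(t_m)-\varphi(s_m)}{\lambda_m}\to w$. The first relation and $s_m\to x$ give $v\in\pTan^+(A,x)$ by \eqref{paraseq} for the set $A$; the Cyrenian lemma~\ref{cireneoC1} applied to $f=\varphi$, $\{s_m\}_m$ as the base sequence, $\{t_m\}_m$, and this $v$ forces $\frac{\varphi(t_m)-\varphi(s_m)}{\lambda_m}\to L(v)$, whence $w=L(v)$. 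For ``$\supset$'': given $v\in\pTan^+(A,x)$, \eqref{paraseq} produces $\lambda_m\to0^+$, $s_m\to x$ in $A$, $t_m\in A$ with $\frac{t_m-s_m}{\lambda_m}\to v$; the Cyrenian lemma then yields $\frac{\varphi(t_m)-\varphi(s_m)}{\lambda_m}\to L(v)$, and feeding the pair of sequences $\{(t_m,\varphi(t_m))\}$, $\{(s_m,\varphi(s_m))\}$ into \eqref{paraseq} for $\graph(\varphi)$ gives $(v,L(v))\in\pTan^+(\graph(\varphi),\hat z)$. The argument for \eqref{lem2} is identical in structure but uses the universally-quantified description \eqref{clarkeseq}: for ``$\subset$'' one is handed arbitrary $\lambda_m\to0^+$ and arbitrary $s_m\to x$ in $A$ and must produce $t_m\in A$; this is exactly what $v\in\pTan^-(\graph(\varphi),\hat z)$ provides (after noting every admissible base sequence in $A$ lifts to an admissible base sequence $\{(s_m,\varphi(s_m))\}$ in $\graph(\varphi)$), and $w=L(v)$ again follows from Lemma~\ref{cireneoC1}; for ``$\supset$'', starting from $v\in\pTan^-(A,x)$ and arbitrary $\lambda_m,s_m$, one gets $t_m$ with $\frac{t_m-s_m}{\lambda_m}\to v$, and Lemma~\ref{cireneoC1} upgrades this to convergence of both coordinates, so the lifted pair witnesses $(v,L(v))\in\pTan^-(\graph(\varphi),\hat z)$.

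The one point requiring a little care — and the place I expect the only real friction — is the direction of the Cyrenian lemma being used: Lemma~\ref{cireneoC1} as stated says strict differentiability of $\varphi$ is \emph{equivalent} to the implication ``$\frac{t_m-s_m}{\lambda_m}\to v$ and $s_m\to\hat x$ $\Rightarrow$ $\frac{\varphi(t_m)-\varphi(s_m)}{\lambda_m}\to L(v)$'', so since $\varphi$ is assumed strictly differentiable we may use that implication freely, at any base point $x\in A$ (applying the lemma with $\hat x$ replaced by $x$, which is legitimate because strict differentiability on $A$ means at every point of $A$). The remaining bookkeeping — that sequences in $\graph(\varphi)$ near $\hat z$ correspond bijectively to sequences in $A$ near $x$, and that $\varphi(t_m)\to\varphi(x)$ whenever $t_m\to x$ — is immediate from continuity and the graph structure, and I would state it in one line rather than belabor it.
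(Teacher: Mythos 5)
Your proposal is correct and follows essentially the same route as the paper's proof: both reduce the statement to the sequential descriptions \eqref{paraseq}/\eqref{clarkeseq} and invoke the Cyrenian lemma \ref{cireneoC1} (at the given base point $x$) to force the second coordinate of any limiting difference quotient on the graph to equal $L(v)$, in both inclusions. The only difference is cosmetic: the paper writes out only \eqref{lem1} and leaves \eqref{lem2} to the reader, whereas you spell out the quantifier change for the lower paratangent cone explicitly.
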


\begin{proof}[\emph{\textbf{Proof}}] Fix $x\in A$. We will prove  only \eqref{lem1};  a similar proof of the \eqref{lem2} 
is left to the reader.
\emph{$1^{st}$ claim}: $\pTan^{+}\big(\graph(\varphi),(x,\varphi(x))\big)\subset\{v,L(v)): v\in \pTan^{+}(A,x)\}$. 
Let $v\in\reals^{d}$ and $r\in\reals^{n}$ such that $(v,r)\in  \pTan^{+}(\graph(\varphi),(x,\varphi(x)))$. Then there exist sequences $\{\lambda_{m}\}_{m}\subset\reals_{++}$ and  $\{x_{m}\}_{m}, \{y_{m}\}_{m}\subset A$ such that $\lim_{m}\lambda_{m}=0$, $\lim_{m}(x_{m},\varphi(x_{m}))=( x,\varphi(x))$ and 
\[
\lim_{m\to\infty}\frac{(x_{m}-y_{m},\varphi(x_{m})-\varphi(y_{m}))}{\lambda_{m}}=(v,r).
\leqno(*1)\]
 By  Cyrenian lemma \ref{cireneoC1}, we have $\lim_{m}\frac{\varphi(x_{m})-\varphi(y_{m})}{\lambda_{m}}=L(v)$, since $\lim_{m}\frac{x_{m}-y_{m}}{\lambda_{m}}=v$. Hence, $v\in \pTan^{+}(A,x)$ and $(v,r)=(v,L(v))$, as required.

 \emph{$2^{rd}$ claim}: $\{(v,L(v)): v\in \pTan^{+}(A,x)\}\subset \pTan^{+}\big(\graph(\varphi),(x,\varphi(x))\big)$.
  Let $v\in \pTan^{+}(A,x)$. Then there exist  sequences $\{\lambda_{m}\}_{m}\subset\reals_{++}$, $\{x_{m}\}_{m}, \{y_{m}\}_{m}\subset A$ such that $\lim_{m}\lambda_{m}=0$, $\lim_{m\to\infty}x_{m}=x$ and
\[\lim_{m\to\infty}\frac{x_{m}-y_{m}}{\lambda_{m}}=v.
\leqno(*2)\]
Being $\varphi$ strictly differentiable, property $(*2)$ and Cyrenian lemma \ref{cireneoC1} imply 
\[\lim_{m\to\infty}\frac{\varphi(x_{m})-\varphi(y_{m})}{\lambda_{m}}=L(v).
\leqno(*3)\]
Since $\lim_{m}(x_{m},\varphi(x_{m}))=(x,\varphi(x))$ and $\lim_{m}\lambda_{m}=0$, from $(*2)$ and $(*3)$ follows that 
$(v,L(v))\in \pTan^{+}\big(\graph(\varphi),(x,\varphi(x))\big)$, as required.

 \end{proof}

\begin{lemma}\label{lemDue-strict}
Let $A\subset\reals^{n}$ with $A\subset\der(A)$ and let $\varphi:A\to\reals$ be strictly differentiable. If  $A$ is a $C^{1}$-manifold of $\reals^{n}$ at a point $\hat t\in A$, then $\graph(\varphi)$ is a $C^{1}$-manifold of $\reals^{n+1}$ at $(\hat t,\varphi(\hat t))$.
\end{lemma}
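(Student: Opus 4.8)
The plan is to realise $\graph(\varphi)$, near the point $(\hat t,\varphi(\hat t))$, as the image of an open piece of an affine subspace under an injective, strictly differentiable, non-singular map, and then to invoke the inverse function theorem for strictly differentiable maps recorded in the footnote to Cyrenian Lemma~\ref{cireneoC1}.

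First I would unwind the hypothesis that $A$ is a $C^{1}$-manifold of $\reals^{n}$ at $\hat t$: choose an open neighbourhood $\Omega$ of $\hat t$ in $\reals^{n}$, an affine subspace $H\subset\reals^{n}$ and a $C^{1}$-diffeomorphism $\xi$ of $\Omega$ onto an open set of $\reals^{n}$ with $\xi(A\cap\Omega)=\xi(\Omega)\cap H$, as in Definition~\ref{defsot}. Set $A':=\xi(\Omega)\cap H$, an open subset of the affine space $H$. Since $A\subset\der(A)$, the point $\hat t$ is not isolated in $A$, so $d:=\dim H\ge1$ and $A'\subset\der(A')$; identifying $H$ with $\reals^{d}$ by an affine isomorphism (harmless, being a $C^{1}$-diffeomorphism), $A'$ is a non-empty open subset of $\reals^{d}$ and, in particular, a $C^{1}$-manifold. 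The restriction $g:=(\xi^{-1})|_{A'}\colon A'\to A\cap\Omega$ is onto; it is strictly differentiable on $A'$, being the restriction to $A'\subset\xi(\Omega)$ of the $C^{1}$-diffeomorphism $\xi^{-1}$ (which, being $C^{1}$, is strictly differentiable on the open set $\xi(\Omega)$); and at each $u\in A'$ its differential $\dd g(u)$ is injective, being the restriction to the direction of $H$ of the linear isomorphism $\dd(\xi^{-1})(u)$.

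Next I would introduce the graph map $G\colon A'\to\reals^{n+1}$, $G(u):=(g(u),\varphi(g(u)))$, and record that $G(A')=\{(t,\varphi(t)):t\in A\cap\Omega\}=\graph(\varphi)\cap(\Omega\times\reals)$. The map $G$ is injective because $g$ is. By the chain rule for strictly differentiable functions (footnote to Lemma~\ref{cireneoC1}), $\varphi\circ g$ is strictly differentiable on $A'$ with strict differential $L_{g(u)}\circ\dd g(u)$ at $u$, where $L_{g(u)}$ is a strict differential of $\varphi$ at $g(u)$; hence $G$ is strictly differentiable on $A'$, its strict differential at $u$ being $v\mapsto(\dd g(u)v,\,L_{g(u)}(\dd g(u)v))$. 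This linear map is injective because its first component $\dd g(u)$ is, so $G$ is non-singular on $A'$. Applying the inverse function theorem for strictly differentiable maps (again the footnote to Lemma~\ref{cireneoC1}) to $G$ and the $C^{1}$-manifold $A'$, the image $G(A')$ is a $C^{1}$-manifold of $\reals^{n+1}$. Since $G(A')=\graph(\varphi)\cap(\Omega\times\reals)$ agrees with $\graph(\varphi)$ throughout the open neighbourhood $\Omega\times\reals$ of $(\hat t,\varphi(\hat t))$ and Definition~\ref{defsot} is a local condition, $\graph(\varphi)$ is a $C^{1}$-manifold of $\reals^{n+1}$ at $(\hat t,\varphi(\hat t))$, of dimension $\dim H$.

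The point to watch is that the chain rule and the inverse function theorem are being applied to strictly differentiable functions defined on sets that need not be open ($\varphi$ on $A\cap\Omega$, and $g$, $G$ on the open piece $A'$ of an affine subspace); but this is precisely the generality in which strict differentiability and these results have been set up via Cyrenian Lemma~\ref{cireneoC1}, so no real difficulty arises. Alternatively one may avoid the inverse function theorem and argue directly: $\psi:=\varphi\circ g$ is of class $C^{1}$ (being strictly differentiable on the open set $A'$), hence it extends to a $C^{1}$ function $\Psi$ on an open neighbourhood of $A'$ in $\reals^{n}$ by composition with the orthogonal projection onto $H$; the shear $(z,s)\mapsto(z,\,s-\Psi(z))$ is then a $C^{1}$-diffeomorphism carrying $\graph(\psi)$ onto the flat slice $A'\times\{0\}$, and conjugating it by $\xi\times\mathrm{id}_{\reals}$ yields exactly the $C^{1}$-diffeomorphism required by Definition~\ref{defsot}.
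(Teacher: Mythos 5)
Your primary argument reduces the lemma to part (b) of the inverse function theorem recorded in the footnote to the Cyrenian Lemma \ref{cireneoC1} (``$f(A)$ is a $C^{1}$-manifold whenever $A$ is''). The difficulty is that this assertion is never proved in the paper, and it is not an innocuous black box here: applied to the map $f(t):=(t,\varphi(t))$, which is automatically injective, strictly differentiable and non-singular, it yields Lemma \ref{lemDue-strict} verbatim. So the statement you invoke already contains the statement you are proving as a special case, and its natural proof (flattening the image by an ambient $C^{1}$-diffeomorphism) is exactly the content of this lemma. The sequential Cyrenian lemma does give you cheaply the chain rule, the strict differentiability of $G$, and even the strict differentiability of $(G|_{A'})^{-1}$; but it does not by itself manufacture the ambient chart $\xi$ demanded by Definition \ref{defsot}, and that is where the real work lies. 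As it stands, the primary route is circular within the paper's logical architecture unless you supply an independent proof of footnote (b), which would amount to redoing the construction below in greater generality.

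The ``alternative'' you sketch in your last sentence is, however, precisely the paper's own proof: first treat the case where the domain is an open subset $A'$ of a linear subspace, where strict differentiability on an open set gives $C^{1}$-smoothness, extend $\psi=\varphi\circ\xi^{-1}$ to an open neighbourhood by composing with the orthogonal projection onto $H$, and flatten $\graph(\psi)$ by the shear $(z,s)\mapsto(z,s-\Psi(z))$; then conjugate by $\xi\times\mathrm{id}_{\reals}$ to reduce the general case to that one. Promote that sketch to the main argument (checking the small domain adjustments needed so that the shear is defined on an open neighbourhood of the relevant point) and you have a complete, self-contained proof matching the paper's.
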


\begin{proof}[\emph{\textbf{Proof}}] \emph{$1^{st}$ case: $A$ is a non-empty open subset of some vector subspace $V$ of $\reals^{n}$}. Let $V^{\perp}$ denote the orthogonal complement of $V$. Moreover, let $\nu: (A+V^{\perp})\times\reals\to (A+ V^{\perp})\times\reals$ the function such that 
 $\nu(t+y,z):=(t+y, z-\varphi(t))$ for every $(t,y,z)\in A\times V^{\perp}\times\reals$. 
Since both domain and codomain of $\nu$ coincide with the open set $\Omega:=(A+ V^{\perp})\times\reals$ of $\reals^{n+1}$ and, besides, $\nu$ is bjective and strictly differentiable, we have that $\nu$ is a $C^{1}$-diffeomorphism from $\Omega$ onto $\Omega$. On the other hand we have 
\[\nu\big(\graph(\varphi)\cap \Omega\big)=\nu\big(\graph(\varphi)\big)=A\times\{0\}=\Omega\cap (V\times\{0\})=\nu(\Omega)\cap (V\times\{0\}).
\leqno(*1)\]
Therefore, by Definition \ref{defsot}, $\graph(\varphi)$ is a $C^{1}$-manifold of $\reals^{n+1}$.

\emph{$2^{nd}$ case: $A$ is an arbitrary non-empty subset of  $\reals^{n}$.}
 Since $A$ is a $C^{1}$-manifold at $\hat t$, there exist an open neighborhood $\Omega$ of $\hat t$ in $\reals^{n}$, a vector subspace $V$ of $\reals^{n}$ and a $C^{1}$-diffeomorphism $\xi$ from $\Omega$ to another open subset of $\reals^{n}$ such that
\[\xi(A\cap\Omega)=\xi(\Omega)\cap V.
\leqno(*2)\]
Now, let us define  the strict differentiable function $\psi:\xi(\Omega)\cap V\to\reals$ by $\psi(y):= \varphi(\xi^{-1}(y))$. Since the domain of $\psi$ is an open subset of the vector space $V$, by the first  case we have that $\graph(\psi)$ is a $C^{1}$-manifold of $\reals^{n+1}$. On the other hand, let us define the $C^{1}$-diffeomorphism  $\mu:\Omega\times\reals\to \xi(\Omega)\times\reals$ by $\mu(t,r):=(\xi(t),r)$.
Clearly, 
\[\mu^{-1}(\graph(\psi))=\graph(\varphi)\cap\big((A\cap\Omega)\times\reals\big)=\graph(\varphi)\cap(\Omega\times\reals).
\leqno(*3)\]
Hence, 
\[\graph(\varphi)\cap(\Omega\times\reals)\text{ is a $C^{1}$-manifold of }\reals^{n+1},
\leqno(*4)\]
 since it is the image of the $C^{1}$-manifold $\graph(\psi)$ by the $C^{1}$-diffeomorphism $\mu^{-1}$. Therefore $\graph(\varphi)$ is a $C^{1}$-manifold at $(\hat t,\varphi(\hat t))$, because $\Omega\times\reals$ is an open neighborhood of $(\hat t, \varphi(\hat t))$.
\end{proof}

\begin{theorem}[\textbf{Four-cones coincidence theorem: local version}]\label{theomainlem}
Let $F\subset\reals^{n}$ and let $\hat x\in F$. Then $F$  is  a $C^{1}$-manifold at $\hat x$ if and only if the following three properties hold:
\begin{enumerate}
\item \labelpag{mainlablem0}
 $F$ is locally compact at $\hat x$,
\item \labelpag{mainlablem1}
$\pTan^{-}(F,\hat x)=\pTan^{+}(F,\hat x)$,
\item \labelpag{mainlablem2} there exists an open ball $\B_{\delta}(\hat x)$ centered at $\hat x$ such that  \[\pTan^{+}(F,x)=\pLTan^{+}(F,x)
\] for  every $x\in F\cap \B_{\delta}(\hat x)$.  
\end{enumerate}

\end{theorem}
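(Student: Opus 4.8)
The plan is to establish the two implications separately, the ``only if'' part directly from Proposition \ref{proppre} and the ``if'' part by induction on the ambient dimension $n$. \emph{Necessity} is immediate: if $F$ is a $C^{1}$-manifold at $\hat x$, fix $(\Omega,H,\xi)$ as in Definition \ref{defsot}; since $\Omega$ is open, $F$ is a $C^{1}$-manifold at \emph{every} point of $F\cap\Omega$, so Proposition \ref{proppre} may be invoked throughout $F\cap\Omega$. Part \eqref{pre0} gives the local compactness \eqref{mainlablem0}; part \eqref{pre3} at $\hat x$ gives $\pTan^{-}(F,\hat x)=\pTan^{+}(F,\hat x)$, i.e.\ \eqref{mainlablem1}; and parts \eqref{pre2} and \eqref{pre3} together show that $\pTan^{+}(F,x)=\Tan^{+}(F,x)$ is a vector space — hence equal to $\pLTan^{+}(F,x)$ — for every $x\in F\cap\Omega$, which is \eqref{mainlablem2} with any $\delta$ for which $\B_{\delta}(\hat x)\subset\Omega$.

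For \emph{sufficiency} I would assume \eqref{mainlablem0}--\eqref{mainlablem2} at $\hat x$ and induct on $n$; since the recursion below passes from $\reals^{n}$ to $\reals^{n-1}$ and, for $n\le1$, only the two degenerate cases that follow can occur, the induction is well founded. If $\pTan^{+}(F,\hat x)=\{0\}$ then $\Tan^{+}(F,\hat x)=\{0\}$, so by \eqref{coll6} $\hat x$ is isolated in $F$ and $F$ is locally the $0$-dimensional manifold $\{\hat x\}$; if $\pTan^{+}(F,\hat x)=\reals^{n}$ then \eqref{mainlablem1} gives $\pTan^{-}(F,\hat x)=\reals^{n}$, so by \eqref{coll5rock} (with \eqref{mainlablem0}) $\hat x\in\intt(F)$ and $F$ is locally open. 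There remains the generic case $1\le d:=\dim\pTan^{+}(F,\hat x)\le n-1$. After an orthogonal change of coordinates — a $C^{1}$-diffeomorphism under which all four cones transform equivariantly, so that \eqref{mainlablem0}--\eqref{mainlablem2} are preserved — I may assume $e_{n}\notin\pTan^{+}(F,\hat x)$. If a sequence $x_{m}\to\hat x$ in $F$ had $e_{n}\in\pTan^{+}(F,x_{m})$ then $e_{n}\in\Ls_{F\ni x\to\hat x}\pTan^{+}(F,x)\subset\pTan^{+}(F,\hat x)$ by the upper semicontinuity \eqref{coll2}, a contradiction; hence $e_{n}\notin\pTan^{+}(F,x)$, and by \eqref{mainlablem2} also $e_{n}\notin\pLTan^{+}(F,x)$, for every $x$ in some ball $\B_{\delta}(\hat x)$.

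The next step removes isolated points near $\hat x$. Picking $v\ne0$ in $\pTan^{-}(F,\hat x)=\pTan^{+}(F,\hat x)$, suppose there were isolated points $z_{m}\to\hat x$ of $F$ with isolation radii $\rho_{m}>0$; applying the sequential description \eqref{clarkeseq} of $\pTan^{-}$ with $y_{m}=z_{m}$ and $\lambda_{m}=\min\{1/m,\,\rho_{m}/(m(\|v\|+1))\}$ yields $w_{m}\in F$ with $0<\|w_{m}-z_{m}\|<\rho_{m}$ for large $m$, contradicting isolation. Thus, after shrinking $\delta$, the set $G:=F\cap\B_{\delta}(\hat x)$ satisfies $G\subset\der(G)$, is locally compact at $\hat x$ (inheriting \eqref{mainlablem0}), and, by locality of the upper paratangent cone, has $e_{n}\notin\pLTan^{+}(G,x)$ for every $x\in G$. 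Lemma \ref{corosdiffsequel} then produces $\varepsilon>0$, a set $A\subset\reals^{n-1}$ with $A\subset\der(A)$, locally compact at the point $\hat t$ for which $\hat x=(\hat t,\varphi(\hat t))$, and a strictly differentiable $\varphi:A\to\reals$ with $\graph(\varphi)=F\cap\B_{\varepsilon'}(\hat x)$, $\varepsilon'=\min(\delta,\varepsilon)$. Writing $\Phi_{t}(v):=(v,L_{t}(v))$ for a strict differential $L_{t}$ of $\varphi$ at $t\in A$, locality and Lemma \ref{lemUno-strict} give $\pTan^{\pm}(F,(t,\varphi(t)))=\pTan^{\pm}(\graph(\varphi),(t,\varphi(t)))=\Phi_{t}(\pTan^{\pm}(A,t))$ whenever $(t,\varphi(t))\in\B_{\varepsilon'}(\hat x)$; since $\Phi_{t}$ is a linear injection, \eqref{mainlablem1} for $F$ at $\hat x$ forces $\pTan^{-}(A,\hat t)=\pTan^{+}(A,\hat t)$, and since a linear preimage of a vector subspace is a vector subspace, \eqref{mainlablem2} for $F$ forces $\pTan^{+}(A,t)=\Phi_{t}^{-1}(\pTan^{+}(\graph(\varphi),(t,\varphi(t))))$ to be a vector space, i.e.\ to equal $\pLTan^{+}(A,t)$, for every $t$ in the relatively open neighborhood $\{t\in A:(t,\varphi(t))\in\B_{\varepsilon'}(\hat x)\}$ of $\hat t$. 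Hence $A$ satisfies \eqref{mainlablem0}--\eqref{mainlablem2} at $\hat t$; the inductive hypothesis makes $A$ a $C^{1}$-manifold at $\hat t$, whence by Lemma \ref{lemDue-strict} so is $\graph(\varphi)=F\cap\B_{\varepsilon'}(\hat x)$ at $\hat x$, and therefore so is $F$.

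The hard part is the orchestration rather than any single estimate: the coordinate and the ball must be chosen so that the graph representation of Lemma \ref{corosdiffsequel} holds on an \emph{entire} neighborhood of $\hat x$, which is exactly where the upper semicontinuity \eqref{coll2} of $\pTan^{+}$ and hypothesis \eqref{mainlablem2} are used together; nearby isolated points must be eliminated separately — this is where $\pTan^{-}(F,\hat x)\ne\{0\}$ enters — so that $G\subset\der(G)$; and one must check that \emph{all three} hypotheses, not just one of them, descend from $F$ to the lower-dimensional base $A$ through the strict-differentiability transfer in Lemmas \ref{lemUno-strict} and \ref{lemDue-strict}. The degenerate cases $d=0$ and $d=n$ have to be peeled off first precisely because the graph reduction needs $1\le d\le n-1$.
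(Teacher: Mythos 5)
Your proof is correct and follows essentially the same route as the paper's: the same three-case split on $\dim\pTan^{+}(F,\hat x)$, the same reduction to a strictly differentiable graph over $A\subset\reals^{n-1}$ via Lemma \ref{corosdiffsequel}, the same transfer of the three hypotheses to $A$ through Lemmas \ref{lemUno-strict} and \ref{lemDue-strict}, and the same induction on $n$. The only (harmless) deviations are that you eliminate nearby isolated points by a direct sequential argument from $\pTan^{-}(F,\hat x)\neq\{0\}$ where the paper combines \eqref{coll4} with \eqref{coll6}, and that you spell out the necessity of \eqref{mainlablem2}, which the paper leaves implicit.
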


\begin{proof}[\emph{\textbf{Proof}}] \emph{Necessity}. Let $F$ be a $C^{1}$-manifold of $\reals^{n}$ at $\hat x$. Clearly, $F$ is locally compact at $\hat x$. On the other hand, Property \eqref{pre3} of Proposition \ref{proppre} implies the coincidence of the lower and upper paratangent cones, as required. \emph{Sufficiency}. Assume \eqref{mainlablem0}, \eqref{mainlablem1} and \eqref{mainlablem2} are true.

\emph{$1^{st}$ case: $\dim(\pLTan^{+}(F,\hat x))=\{0\}$}. Since $\Tan^{+}(F,\hat x)\subset\pLTan^{+}(F,\hat x)$, we have $\Tan^{+}(F,\hat x)=\{0\}$. Hence, by \eqref{coll6}, $\hat x$ is an isolated point of $F$. Therefore $F$ is a $C^{1}$-manifold of dimension zero at $\hat x$.

\emph{$2^{nd}$ case: $\dim(\pLTan^{+}(F,\hat x))=n$}. By  $\eqref{mainlablem1}$ we have $\pTan^{-}(F,\hat x)=\reals^{n}$. Hence,  being $F$ locally compact at $\hat x$, property  \eqref{coll5rock} implies $\hat x\in\intt(F)$. Therefore $F$ is a $C^{1}$-manifold of dimension $n$ at $\hat x$, since $\hat x$ is an interior point of $F$.

\emph{$3^{rd}$ case: $0<\dim(\pLTan^{+}(F,\hat x))<n$}. Choose  two non null vectors $v_{0}, v_{n}\in\reals^{n}$ such that $v_{0}\in \pLTan^{+}(F,\hat x))$ and $v_{n}\not\in \pLTan^{+}(F,\hat x))$. Without loss of generality, assume that $w=e_{n}$. By \eqref{coll4} we have
\[\pTan^{-}(F,\hat x)=\Li_{F\ni x\to \hat x}\Tan^{+}(F,x).
\leqno(*1)\]
Moreover, by \eqref{mainlablem1} and \eqref{mainlablem2}, the equality $\pTan^{-}(F,\hat x)= \pLTan^{+}(F,\hat x)$ holds; hence $(*1)$ implies $v_{0}\in \Li_{F\ni x\to \hat x}\Tan^{+}(F,x)$; consequently, there exists a positive real number $\delta_{1}\le\delta$ such that the non null vector $v_{0}\in \Tan^{+}(F,x)$ for every $x\in F\cap\B_{\delta_{1}}(\hat x)$. Therefore, by \eqref{coll6}, 
\[F\cap\B_{\delta_{1}}(\hat x)\subset\der(F\cap\B_{\delta_{1}}(\hat x))
\leqno(*2)\] 
Now, by \eqref{coll2} and \eqref{mainlablem2} we have 
\[\Ls_{F\ni x\to\hat x}\pLTan^{+}(F,x)\subset \pLTan^{+}(F,\hat x);
 \leqno(*3)\]
 consequently, $e_{n}\not\in \Ls_{F\ni x\to\hat x}\pLTan^{+}(F,x)$. Therefore, there exists a positive real number $\delta_{2}\le\delta_{1}$ such that
 \[e_{n}\not\in \pLTan^{+}(F,x)\text{ for every }x\in F\cap \B_{\delta_{2}}(\hat x).
 \leqno(*4)\]
 From $(*2)$, $(*4)$ and  
  Lemma \ref{corosdiffsequel}
  there are a positive real number $\varepsilon\le \delta_{2}$, a subset $A$ of $\reals^{n-1}$ with $A\subset\der(A)$ and there exists a function $\varphi:A\to\reals$ strictly differentiable such 
\[\graph(\varphi)=F\cap \B_{\varepsilon}(\hat x).
\leqno(*5)\]
and  
\[A \text{ is locally compact at }\hat t
\leqno(*6)\]
  where $\hat t\in A$ and $(\hat{t},\varphi(\hat t))=\hat x$.  Being $\B_{\varepsilon}(\hat x)$ an open set, by $(*5)$ and $(5.6)$ we have $\pTan^{-}(\graph(\varphi),(t,\varphi(t)))=\pTan^{-}(F,(t,\varphi(t)))$ and $\pTan^{+}(\graph(\varphi),(t,\varphi(t)))=\pTan^{+}(F,(t,\varphi(t)))$  for every $t\in A$. Hence, by \eqref{mainlablem1} and \eqref{mainlablem2} we obtain  
\[\pTan^{-}(\graph(\varphi),(\hat t,\varphi(\hat t)))=\pTan^{+}(\graph(\varphi),(\hat t,\varphi(\hat t)))
\leqno(*7)\]
\and 
\[\pTan^{+}(\graph(\varphi),( t,\varphi( t)))=\pLTan^{+}(\graph(\varphi),( t,\varphi( t)))\text{ for every }t\in A.
\leqno(*8)\]

Therefore, from Lemma \ref{lemUno-strict} it follows that
\[\pTan^{-}(A,\hat t)=\pTan^{+}(A,\hat t)
 \leqno(*7')\]
and 
 \[\pTan^{+}(A,t)=\pLTan^{+}(A, t) \text{ for every }t\in A.
 \leqno(*8')\]
Now, by induction, assume that this theorem \ref{theomainlem} holds for subsets of $\reals^{n-1}$. Then, by $(*6)$, $(*7')$ and $(*8')$ , we have that the subset $A$ of $\reals^{n-1}$ is a $C^{1}$-manifold at $\hat t$. Therefore, by Lemma \ref{lemDue-strict},   $\graph(\varphi)$ (i.e. $F\cap\B_{\varepsilon}(\hat x)$) is a $C^{1}$-manifold of $\reals^{n}$ at $\hat x$, as required.
\end{proof}

\begin{theorem}[\textbf{Four-cones coincidence theorem: global version}]\label{theomain}
A  non-empty subset $F$ of $\reals^{n}$ is  a $C^{1}$-manifold  if and only if $F$ is locally compact and  
the lower and upper paratangent cones to $F$ coincide at every point, i.e.,
\begin{equation}\label{mainlab1}
\pTan^{-}(F,x)=\pTan^{+}(F,x)\text{ for every }x\in F.
\end{equation}

\end{theorem}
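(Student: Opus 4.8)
The plan is to derive the global statement directly from the local version, Theorem~\ref{theomainlem}, together with the structural properties of the paratangent cones collected in Section~\ref{sec-tan-paratan} and in Proposition~\ref{proppre}. \emph{Necessity} is immediate: if $F$ is a $C^{1}$-manifold, then property~\eqref{pre0} of Proposition~\ref{proppre} gives that $F$ is locally compact, and property~\eqref{pre3} gives the coincidence of all four tangent cones, in particular $\pTan^{-}(F,x)=\pTan^{+}(F,x)$ at every $x\in F$.

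For \emph{sufficiency}, I would fix an arbitrary point $\hat x\in F$ and verify, one by one, the three hypotheses of Theorem~\ref{theomainlem}. Hypothesis~\eqref{mainlablem0} (local compactness at $\hat x$) and hypothesis~\eqref{mainlablem1} ($\pTan^{-}(F,\hat x)=\pTan^{+}(F,\hat x)$) are part of the assumptions. The only real content is hypothesis~\eqref{mainlablem2}, which asks that $\pTan^{+}(F,x)$ equal its linear hull $\pLTan^{+}(F,x)$ for all $x$ in some ball around $\hat x$; equivalently, that $\pTan^{+}(F,x)$ be a vector subspace of $\reals^{n}$ near $\hat x$.

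The key step is to prove that, under hypothesis~\eqref{mainlab1}, the cone $\pTan^{+}(F,x)$ is a vector subspace of $\reals^{n}$ \emph{at every} $x\in F$, so that \eqref{mainlablem2} then holds trivially with, say, $\delta=1$. For this, fix $x\in F$ and set $C:=\pTan^{+}(F,x)$. By \eqref{mainlab1} we have $C=\pTan^{-}(F,x)$, so $C$ is convex by property~\eqref{coll3} (Clarke); on the other hand $C=-C$ by property~\eqref{coll1}. A convex cone that is symmetric about the origin is a linear subspace: convexity together with the cone property give closure under addition (from $v+w=2(\tfrac12 v+\tfrac12 w)$), while the cone property together with $C=-C$ give closure under multiplication by every real scalar. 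Hence $\pTan^{+}(F,x)=\pLTan^{+}(F,x)$, as wanted.

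With all three hypotheses of Theorem~\ref{theomainlem} in hand at $\hat x$, that theorem yields that $F$ is a $C^{1}$-manifold at $\hat x$; since $\hat x$ was arbitrary, $F$ is a $C^{1}$-manifold. I do not expect any genuine obstacle in this passage: the entire difficulty of the equivalence is already packed into the local version, Theorem~\ref{theomainlem}, and into Proposition~\ref{proppre}. The one thing worth isolating is the elementary observation that a convex, origin-symmetric cone is a vector space, since this is precisely what upgrades the bare coincidence $\pTan^{-}=\pTan^{+}$ into the flatness condition~\eqref{mainlablem2} required by the local theorem.
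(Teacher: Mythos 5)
Your proposal is correct and follows essentially the same route as the paper: necessity via Proposition \ref{proppre}, and sufficiency by using \eqref{coll1} (bilaterality) and \eqref{coll3} (Clarke convexity) to show that the common cone $\pTan^{-}(F,x)=\pTan^{+}(F,x)$ is a vector space, so that hypothesis \eqref{mainlablem2} of the local Theorem \ref{theomainlem} holds and that theorem applies at every point. The only difference is that you spell out the elementary fact that a convex, origin-symmetric cone is a linear subspace, which the paper leaves implicit.
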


It is certainly worthwhile to remark that, by \eqref{4cones} and \eqref{coll4},  condition \eqref{mainlab1} amounts to the set inclusion
\begin{equation}\label{mainlab2}
\pTan^+(F, x)\subset \Li_{F\ni y\to  x}\Tan^+( F, y)\text{ for every }x\in F.
\end{equation}

\begin{proof}[\emph{\textbf{Proof}}] \emph{Necessity}. Let $F$ be a $C^{1}$-manifold of $\reals^{n}$. Clearly, $F$ is locally compact. On the other hand, Property \eqref{pre3} of Proposition \ref{proppre} implies the coincidence of the lower and upper paratangent cones, as required. \emph{Sufficiency}. Let $F$ be  locally compact and let both lower and upper paratangent cones coincide at every point.  Then  \eqref{coll1} and \eqref{coll3} imply that the upper paratangent cones to $F$ are  vector space, i.e.,
\[\pTan^{+}(F,x)=\pLTan^{+}(F,x)
\leqno(*1)\] 
for every $x\in F$. Hence, Theorem \ref{theomainlem} implies that $F$ is $C^{1}$-manifold of $\reals^{n}$ at its every point, as required.
\end{proof}

Let us denote with $\GL_n(\R)$ the general linear group, i.e. the multiplicative group of the $n\times n$ invertible matrices with real entries. We denote with $E$ the unit of $\GL_n(\R)$. Let $M_{n}(\R)$ be the algebra of $n\times n$ matrices, endowed with Euclidean topology. 
Clearly every subgroup of $\GL_n(\R)$ which is a $C^1$ manifold of  $M_{n}(\R)$, is necessarily a locally compact set with respect  Euclidean topology. Conversely, we will apply main Theorem \ref{theomain} to prove that 

\begin{corollary}[\textsc{von Neumann}, {\cite[(1929)]{vonN}}]\label{coroneumann} A locally compact subgroup $\calG$ of $\GL_n(\R)$ is a $C^1$-manifold  of  $M_{n}(\R)$.
\end{corollary}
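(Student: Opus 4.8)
The plan is to apply the global Four-cones Coincidence Theorem \ref{theomain} to the set $\calG\subset M_n(\R)\cong\reals^{n^2}$. Since $\calG$ is assumed locally compact, it suffices to verify that $\pTan^{-}(\calG,g)=\pTan^{+}(\calG,g)$ for every $g\in\calG$. The key structural feature to exploit is the group action: left translation $L_g\colon M_n(\R)\to M_n(\R)$, $X\mapsto gX$, is a \emph{linear} isomorphism of $M_n(\R)$ (because $g$ is invertible), it maps $\calG$ onto $\calG$, and it sends $E$ to $g$. In particular, by the sequential descriptions \eqref{clarkeseq} and \eqref{paraseq}, the paratangent cones are carried by $L_g$: since $\frac{gx_m-gy_m}{\lambda_m}=g\cdot\frac{x_m-y_m}{\lambda_m}$ and $y_m\to E \iff gy_m\to g$, one gets $\pTan^{\pm}(\calG,g)=g\cdot\pTan^{\pm}(\calG,E)$, where $g\cdot(\cdot)$ denotes matrix multiplication. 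Hence the coincidence $\pTan^{-}(\calG,g)=\pTan^{+}(\calG,g)$ at an arbitrary $g$ reduces to the single coincidence $\pTan^{-}(\calG,E)=\pTan^{+}(\calG,E)$ at the identity.

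So the heart of the matter is to show $\pTan^{-}(\calG,E)=\pTan^{+}(\calG,E)$. By the general inclusions \eqref{4cones} only $\pTan^{+}(\calG,E)\subset\pTan^{-}(\calG,E)$ needs proof. First I would note, using \eqref{coll1} and \eqref{coll3}, that $\pTan^{-}(\calG,E)$ is a convex cone and $\pTan^{+}(\calG,E)$ is symmetric; once the inclusion is established they coincide and are a linear subspace $\mathfrak g$ of $M_n(\R)$ — this will be von Neumann's Lie algebra. The idea for the inclusion is again translation-invariance combined with upper semicontinuity \eqref{coll2}: for any $g\in\calG$ we have $\pTan^{+}(\calG,g)=g\cdot\pTan^{+}(\calG,E)$, so the set-valued map $g\mapsto\pTan^{+}(\calG,g)$ is ``constant up to the group action''; feeding this into characterization \eqref{coll4}, namely $\pTan^{-}(\calG,E)=\Li_{\calG\ni g\to E}\Tan^{+}(\calG,g)$, and using that $\Tan^{+}(\calG,g)\subset\pTan^{+}(\calG,g)=g\cdot\pTan^{+}(\calG,E)$ together with $\Tan^{+}(\calG,E)$ itself being nontrivial when $E\in\der(\calG)$, one propagates a single upper-tangent vector at $E$ to a neighborhood and then pushes it into the lower limit. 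The cleanest route: take $v\in\pTan^{+}(\calG,E)$; by \eqref{paraseq} there are $\lambda_m\to0^+$ and $x_m,y_m\in\calG$ with $y_m\to E$ and $\frac{x_m-y_m}{\lambda_m}\to v$; then $y_m^{-1}x_m\in\calG$, $y_m^{-1}x_m\to E$, and $\frac{y_m^{-1}x_m-E}{\lambda_m}=y_m^{-1}\cdot\frac{x_m-y_m}{\lambda_m}\to v$, so $v\in\Tan^{+}(\calG,E)$; thus $\pTan^{+}(\calG,E)=\Tan^{+}(\calG,E)$, and one shows this common cone sits inside $\Li_{\calG\ni g\to E}\Tan^{+}(\calG,g)$ because $\Tan^{+}(\calG,g)\supset g\cdot\Tan^{+}(\calG,E)$ (same $y_m^{-1}x_m$ trick, translated by $g$) and $g\cdot\Tan^{+}(\calG,E)\to\Tan^{+}(\calG,E)$ as $g\to E$. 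By \eqref{coll4} this last lower limit is exactly $\pTan^{-}(\calG,E)$, giving $\pTan^{+}(\calG,E)\subset\pTan^{-}(\calG,E)$.

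With the coincidence at $E$ in hand, translating by arbitrary $g\in\calG$ as above yields $\pTan^{-}(\calG,g)=\pTan^{+}(\calG,g)$ for all $g\in\calG$; Theorem \ref{theomain} then concludes that $\calG$ is a $C^1$-manifold of $M_n(\R)$.

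I expect the main obstacle to be the careful bookkeeping of the translation identities at the level of Kuratowski limits — specifically, making rigorous that $\Tan^{+}(\calG,g)\supset g\cdot\Tan^{+}(\calG,E)$ via the substitution $x_m\mapsto y_m^{-1}x_m$ (or $g\,y_m^{-1}x_m$), which requires that matrix inversion is continuous near $E$ so that $y_m^{-1}\to E$ and the products converge as claimed, and that the resulting sequences genuinely lie in $\calG$. Everything else is a direct invocation of the already-established Proposition items \eqref{coll1}, \eqref{coll2}, \eqref{coll3}, \eqref{coll4} and of Theorem \ref{theomain}.
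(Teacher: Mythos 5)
Your proposal is correct and follows essentially the same route as the paper: both arguments reduce to the identity by left translation, use the group operation (your $y_m^{-1}x_m$ versus the paper's $H_kA_mB_m^{-1}$) to convert upper paratangent witnesses at $E$ into upper tangent vectors at nearby group elements, and conclude via the representation $\pTan^{-}(\calG,E)=\Li_{\calG\ni H\to E}\Tan^{+}(\calG,H)$ from \eqref{coll4} (equivalently, condition \eqref{mainlab2}) before invoking Theorem \ref{theomain}. The only difference is cosmetic: you factor the argument through the intermediate equality $\pTan^{+}(\calG,E)=\Tan^{+}(\calG,E)$, which the paper obtains implicitly in a single combined construction.
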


\begin{proof}[\textbf{\emph{Proof}}] By Theorem \ref{theomain}, it is enough to prove \eqref{mainlab2}, that is, we must prove that
\[\pTan^+(\calG,A)\subset \Li_{H\to A}\Tan^+(\calG,H)
\leqno(*1)\] 
for every $A\in\calG$. 
\emph{$1^{st}$ case: Let $A$ be the unit $E$.}
 Let $V\in \pTan^+(\calG,E)$; by definition there exist three sequences $\{\lambda_m\}_m\subset\reals_{++}$,
$\{A_m\}_m\subset\calG$,
$\{B_m\}_m\subset\calG$ such  that 
$$\lim_m\lambda_m= 0^+,\qquad\lim_mA_m= E,\qquad \lim_m\frac{A_m-B_m}{\lambda_m}=V.\leqno(*2)$$
In order to show  that $V\in\Li_{H\to E}\Tan^+(\calG,H)$,  
let $\{H_k\}_k\subset\calG$ be such that $\lim_kH_k=E$. Define sequences of matrices
$\{H_{k,m}\}_m$ and $\{V_k\}_k$ 
by
$$H_{k,m}:=H_k\cdot A_m\cdot B^{-1}_m, \qquad V_k:=H_k\cdot V.
\leqno(*3)$$
Observe that  
\[V_k\in\Tan^+(\calG,H_k) \text{ for every }k\in\mathbb N,
\leqno(*4)\] since 
$\{H_{k,m}\}_m\subset\mathcal{G}$, $\lim_mH_{k,m}=H_k$ and 
$\lim_m\frac{H_{k,m}-H_k}{\lambda_m}=\lim_m\frac{ H_k\cdot A_m\cdot B^{- 1}_m-H_k}{\lambda_m}=\lim_mH_{k}\cdot\frac{(A_m-B_m)}{\lambda_m}\cdot B^{-1}_m=H_k\cdot V\cdot E=V_k.$ On the other hand, $\lim_kV_k= V$. Hence, because $\{H_k\}_k\subset\calG$ is an arbitrary sequence converging to $E$, from the definition of the lower limit of sets it follows that $V\in \Li_{H\to E}\Tan^+(\calG,H)$.

\emph{$2^{nd}$ case: $A$ is an arbitrary element of  $\calG$.} Since $\frac{A_{m}-B_{m}}{\lambda_{m}}=A\frac{A^{-1}A_{m}-A^{-1}B_{m}}{\lambda_{m}}$ for every $A_{m},B_{m}, \in\calG$ and $\lambda_{m}\in\reals_{++}$, by the definitions of tangent and upper paratangent cones we have
\[\Tan^+(\calG,A)=A\cdot\Tan^+(\calG,E)\text{ and }\pTan^+(\calG,A)=A\cdot\pTan^+(\calG,E).
\leqno(*5)\]
 Therefore, from the \emph{$1^{st}$ case} it follows that 
  $\pTan^+(\calG,A)=A\cdot\pTan^+(\calG,E)\subset A\cdot\Li_{H\to E}\Tan^+(\calG,H)=\Li_{H\to A}\Tan^+(\calG,H)$, as required.
\end{proof}

Continuous variability of tangent spaces (in traditional sense)  does not assure that a set is a $C^{1}$-manifold (for example, see fig.\,$1$  and $2$ of Section \ref{sec-tan-paratan-diff}). 
In order to  characterize $C^{1}$-manifolds, in the following two corollaries simple conditions are added  to the continuous variability of tangent spaces. 
\[
\begin{array}{cc}
\includegraphics[scale=0.5]{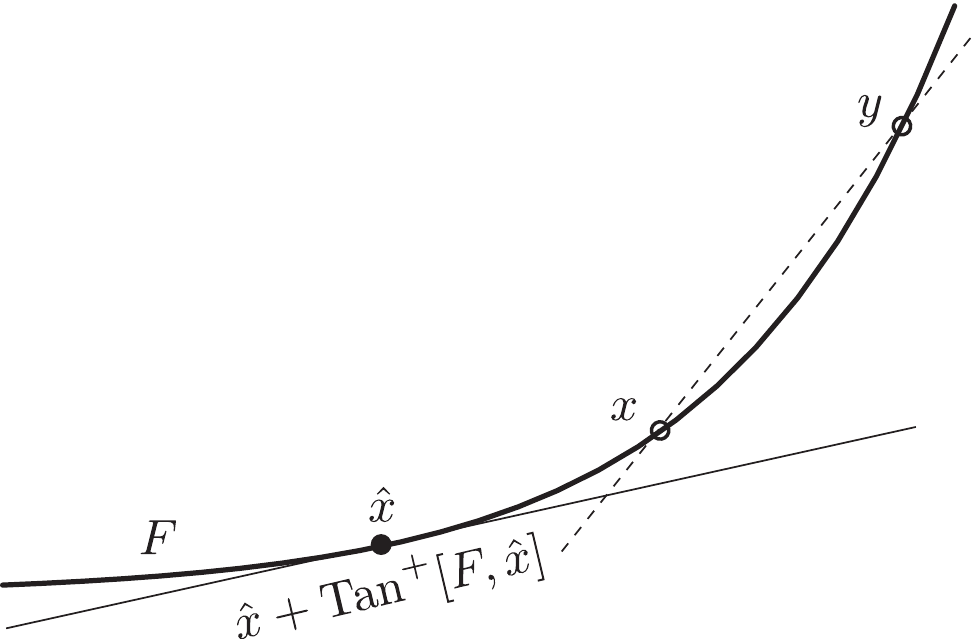}&
\includegraphics[scale=0.5]{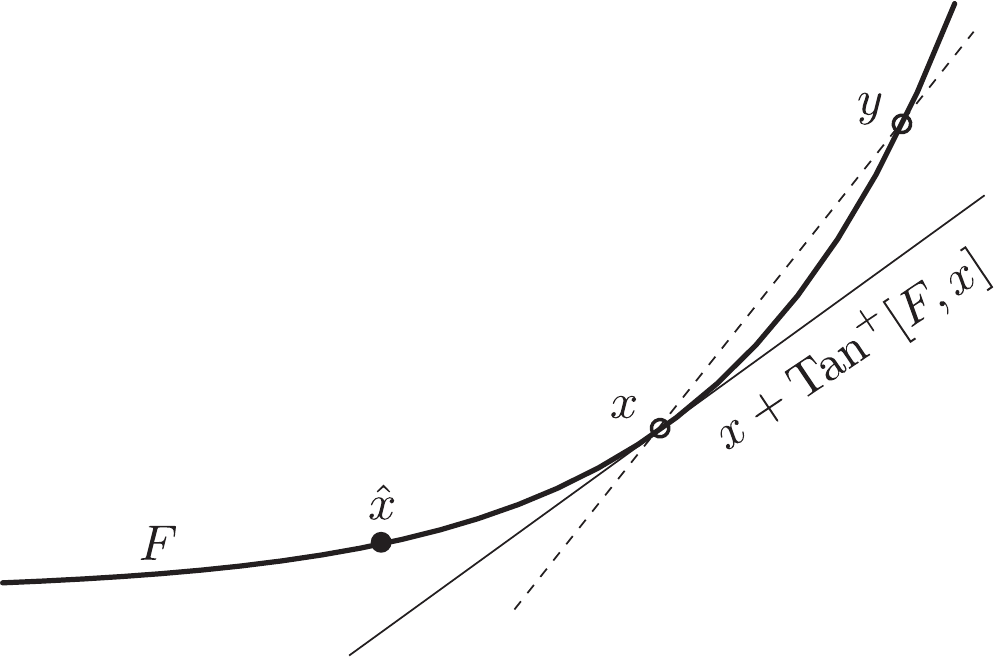}
\cr
\hbox{fig. 3}&
\hbox{fig. 4}
\end{array}
\]

\begin{corollary}\label{coro3A}
A  non-empty subset  $F$ of $\reals^{n}$ is  a  $C^{1}$-manifold  if and only if $F$ is locally compact and the following two properties hold. 
\begin{enumerate}
\item \labelpag{cont1} the map $x\mapsto\Tan^{+}(F, x)$ is lower semicontinuous on $F$,
\item \labelpag{cont1piu}$\lim_{F\ni x,y\to \hat{x}\atop y\neq x}\frac{\dist(y,x+\Tan^{+}(F,\hat{x}))}{\|y-x\|}=0$ for every $\hat x\in\der(F)$.

\end{enumerate}
\end{corollary}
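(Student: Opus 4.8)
The plan is to deduce Corollary~\ref{coro3A} from the global Four-cones Coincidence Theorem~\ref{theomain}: once $F$ is locally compact, being a $C^{1}$-manifold is equivalent to $\pTan^{-}(F,x)=\pTan^{+}(F,x)$ for every $x\in F$, so it suffices to show that, under local compactness, the conjunction of \eqref{cont1} and \eqref{cont1piu} is equivalent to this four-cones coincidence. I would split the coincidence into its ``upper half'' $\pTan^{+}(F,x)=\Tan^{+}(F,x)$, to be matched with \eqref{cont1piu}, and its ``lower half'' $\pTan^{-}(F,x)=\Tan^{+}(F,x)$, to be matched with \eqref{cont1}; together with the chain \eqref{4cones} these two halves amount to $\pTan^{-}(F,x)=\pTan^{+}(F,x)$.

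For the upper half, first note that at an isolated point $x$ of $F$ one trivially has $\Tan^{+}(F,x)=\pTan^{+}(F,x)=\{0\}$, so only accumulation points are at stake. Fix $\hat x\in\der(F)$. Since the two running variables $x,y$ in \eqref{cont1piu} range symmetrically over $F$ near $\hat x$ with $y\ne x$, and $\|y-x\|=\dist(x,y)$, renaming them shows that \eqref{cont1piu} is precisely the defining condition \eqref{paratangtrad} of Definition~\ref{defparatangtrad} for the set $H:=\Tan^{+}(F,\hat x)$; that is, \eqref{cont1piu} at $\hat x$ says exactly that $\Tan^{+}(F,\hat x)$ is paratangent in traditional sense to $F$ at $\hat x$. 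Because $\Tan^{+}(F,\hat x)$ is closed (being an upper Kuratowski limit), the refinement of Proposition~\ref{propparatangtrad-anal} recorded in footnote~\ref{foot1} applies and gives that \eqref{cont1piu} holds at $\hat x$ if and only if $\pTan^{+}(F,\hat x)\subset\Tan^{+}(F,\hat x)$; combined with $\Tan^{+}(F,\hat x)\subset\pTan^{+}(F,\hat x)$ from \eqref{4cones}, this is equivalent to $\pTan^{+}(F,\hat x)=\Tan^{+}(F,\hat x)$.

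For the lower half, lower semicontinuity of $x\mapsto\Tan^{+}(F,x)$ at a point $x$ reads $\Tan^{+}(F,x)\subset\Li_{F\ni y\to x}\Tan^{+}(F,y)$; since $F$ is locally compact, property~\eqref{coll4} identifies the right-hand side with $\pTan^{-}(F,x)$, so \eqref{cont1} is equivalent to $\Tan^{+}(F,x)\subset\pTan^{-}(F,x)$ for every $x\in F$, which, with $\pTan^{-}(F,x)\subset\Tan^{+}(F,x)$ from \eqref{4cones}, is equivalent to $\pTan^{-}(F,x)=\Tan^{+}(F,x)$ for every $x\in F$. Combining the two halves, under local compactness \eqref{cont1} and \eqref{cont1piu} together are equivalent to $\pTan^{-}(F,x)=\Tan^{+}(F,x)=\pTan^{+}(F,x)$ for every $x\in F$, hence to $\pTan^{-}(F,x)=\pTan^{+}(F,x)$ for every $x\in F$; Theorem~\ref{theomain} then yields sufficiency. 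Necessity follows from the same equivalences: a $C^{1}$-manifold is locally compact by \eqref{pre0}, has continuous (hence lower semicontinuous) tangent map by \eqref{pre4}, and satisfies $\pTan^{+}(F,x)=\Tan^{+}(F,x)$ by \eqref{pre3}, so \eqref{cont1piu} holds at every accumulation point via Proposition~\ref{propparatangtrad-anal} (here $\Tan^{+}(F,x)$ is even a genuine vector space by \eqref{pre2}).

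There is no genuinely hard analytic step here; the care required is bookkeeping. The three points to watch are: reading \eqref{cont1piu} correctly against Definition~\ref{defparatangtrad} by exploiting the interchangeability of the two running variables; justifying the use of Proposition~\ref{propparatangtrad-anal} for the set $\Tan^{+}(F,\hat x)$, which need not a priori be a vector space, through the ``smallest closed set'' formulation of that proposition; and disposing of the isolated points of $F$ separately. Every remaining implication is a direct appeal to \eqref{4cones}, \eqref{coll4}, Proposition~\ref{proppre} and Theorem~\ref{theomain}.
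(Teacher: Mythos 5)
Your proposal is correct and follows essentially the same route as the paper's own proof: reduce to Theorem~\ref{theomain}, identify condition \eqref{cont1} with $\Tan^{+}(F,x)=\pTan^{-}(F,x)$ via \eqref{coll4}, and identify condition \eqref{cont1piu} with $\pTan^{+}(F,\hat x)\subset\Tan^{+}(F,\hat x)$ via footnote~\ref{foot1}. Your additional bookkeeping (the symmetry of the running variables against Definition~\ref{defparatangtrad}, the closedness of $\Tan^{+}(F,\hat x)$, and the isolated points) only makes explicit what the paper leaves implicit.
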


In the case where $\Tan^{+}(F,\hat{x})$ is a vector space, condition \eqref{cont1piu} means that the angle between the straight-line passing through two distinct points $y$ and $x$ of $F$ and the vector space $\Tan^{+}(F,\hat{x})$ tangent to $F$ at $\hat x$, tends to zero as $x$ and $y$ tend to $\hat{x}$ (see fig.\,3 above).

\begin{proof}[\textbf{\emph{Proof}}] By \eqref{coll4} the lower semicontinuity of $x\mapsto\Tan^{+}(F,x)$ amounts to 
\[
\Tan^{+}(F,x)=\pTan^{-}(F,x)\text{ for every }x\in F
\leqno(*)
\]
On the other hand, by footnote \ref{foot1}, condition \eqref{cont1piu} becomes
\[\pTan^{+}(F,\hat{x})\subset \Tan^{+}(F)\text{ for every }\hat{x}\in F.
\leqno(**)\]
Hence, from conditions \eqref{cont1} and \eqref{cont1piu} it follows that $\pTan^{+}(F,\hat{x})\subset \pTan^{-}(F,x)$; and conversely. Therefore Theorem \ref{theomain} entails both necessity and sufficiency of the conditions \eqref{cont1} and \eqref{cont1piu}.
 \end{proof}

\begin{corollary}\label{coro3AA}
A  non-empty subset  $F$ of $\reals^{n}$ is  a  $C^{1}$-manifold  if and only if $F$ is locally compact and the following two properties hold:  
\begin{enumerate}
\item \labelpag{cont2} 
 the map $x\mapsto\Tan^{+}(F, x)$ is continuous on $F$,
\item \labelpag{cont2piu}$\lim_{F\ni x,y\rightarrow \hat{x}\atop y\neq x}\frac{\dist(y,x+\Tan^{+}(F,x))}{\|y-x\|}=0$  for every $\hat x\in\der(F)$.
\end{enumerate}

\end{corollary}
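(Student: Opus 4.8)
The plan is to reduce the statement to Corollary \ref{coro3A}. The only difference between the two corollaries is that here we require \emph{continuity} of the map $x\mapsto\Tan^{+}(F,x)$ and write the paratangency estimate \eqref{cont2piu} with the \emph{moving} cone $\Tan^{+}(F,x)$, whereas Corollary \ref{coro3A} requires only \emph{lower} semicontinuity and writes \eqref{cont1piu} with the \emph{frozen} cone $\Tan^{+}(F,\hat x)$. Since continuity implies lower semicontinuity, \eqref{cont2} yields \eqref{cont1} at once; hence everything hinges on the following claim: \emph{if $x\mapsto\Tan^{+}(F,x)$ is continuous, then for every $\hat x\in F$ condition \eqref{cont1piu} holds at $\hat x$ if and only if condition \eqref{cont2piu} holds at $\hat x$} (accumulation points of $F$ not belonging to $F$, if any, are handled exactly as in the proof of Corollary \ref{coro3A}).

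To prove the claim, I would start from the elementary identity $\frac{\dist(y,x+C)}{\|y-x\|}=\dist\!\big(\frac{y-x}{\|y-x\|},C\big)$, valid for every cone $C\subset\reals^{n}$ and every $x\ne y$, together with the $1$-Lipschitz continuity of $z\mapsto\dist(z,C)$, and argue by contradiction using compactness of the unit sphere of $\reals^{n}$. If one of the two limits fails at $\hat x$, there are $\{x_{m}\}_{m},\{y_{m}\}_{m}\subset F$ with $x_{m}\ne y_{m}$, both tending to $\hat x$, and $\varepsilon>0$ for which the relevant quotient stays $\ge\varepsilon$; passing to a subsequence we may assume $u_{m}:=\frac{y_{m}-x_{m}}{\|y_{m}-x_{m}\|}\to u$ with $\|u\|=1$. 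If \eqref{cont1piu} holds but \eqref{cont2piu} fails at $\hat x$, then $\dist(u_{m},\Tan^{+}(F,x_{m}))\ge\varepsilon$ for all $m$ while $\dist(u_{m},\Tan^{+}(F,\hat x))\to0$, so $u\in\Tan^{+}(F,\hat x)$ (a closed set); lower semicontinuity at $\hat x$ then gives $u\in\Li_{m}\Tan^{+}(F,x_{m})$, whence $\dist(u_{m},\Tan^{+}(F,x_{m}))\le\|u_{m}-u\|+\dist(u,\Tan^{+}(F,x_{m}))\to0$ --- a contradiction. Symmetrically, if \eqref{cont2piu} holds but \eqref{cont1piu} fails at $\hat x$, then $\dist(u_{m},\Tan^{+}(F,\hat x))\ge\varepsilon$, so $u\notin\Tan^{+}(F,\hat x)$, while $\dist(u_{m},\Tan^{+}(F,x_{m}))\to0$; choosing $w_{m}\in\Tan^{+}(F,x_{m})$ with $\|w_{m}-u_{m}\|\to0$ gives $w_{m}\to u$, hence $u\in\Ls_{m}\Tan^{+}(F,x_{m})$, and upper semicontinuity at $\hat x$ forces $u\in\Tan^{+}(F,\hat x)$ --- again a contradiction. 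This proves the claim.

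With the claim in hand the corollary follows. For \emph{necessity}: if $F$ is a $C^{1}$-manifold, then $F$ is locally compact and $x\mapsto\Tan^{+}(F,x)$ is continuous by Properties \eqref{pre0} and \eqref{pre4} of Proposition \ref{proppre}, which is \eqref{cont2}; moreover \eqref{cont1piu} holds by the necessity part of Corollary \ref{coro3A}, so \eqref{cont2piu} follows from the claim. For \emph{sufficiency}: if $F$ is locally compact and satisfies \eqref{cont2} and \eqref{cont2piu}, then $F$ satisfies \eqref{cont1} (continuity implies lower semicontinuity) and, by the claim, \eqref{cont1piu}; hence $F$ is a $C^{1}$-manifold by the sufficiency part of Corollary \ref{coro3A}.

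Once Corollary \ref{coro3A} is available the argument is short, and I do not expect a serious obstacle. The one point requiring care is that \emph{both} halves of the continuity hypothesis \eqref{cont2} are genuinely used --- lower semicontinuity to pass from the frozen cone $\Tan^{+}(F,\hat x)$ to the moving cone $\Tan^{+}(F,x)$, upper semicontinuity to pass back --- so that neither \eqref{cont1} by itself nor \eqref{cont2piu} with merely lower semicontinuity would close the loop; no property of $\Tan^{+}(F,x)$ beyond its being a closed cone enters the claim.
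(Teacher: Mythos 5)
Your proof is correct, but it is organized differently from the paper's. The paper proves sufficiency head-on: for a unit vector $v\in\pTan^{+}(F,\hat x)$ realized by secant directions $\frac{y_{m}-x_{m}}{\|y_{m}-x_{m}\|}$, hypothesis \eqref{cont2piu} together with a triangle inequality places $v$ in $\Ls_{F\ni x\to\hat x}\Tan^{+}(F,x)$, and the continuity hypothesis \eqref{cont2} then moves it into $\Li_{F\ni y\to\hat x}\Tan^{+}(F,y)$, which is condition \eqref{mainlab2} of Theorem \ref{theomain}; necessity is dismissed with ``it is known.'' You instead factor everything through Corollary \ref{coro3A} via the equivalence, under \eqref{cont2}, of the frozen-cone estimate \eqref{cont1piu} and the moving-cone estimate \eqref{cont2piu}. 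The underlying mechanism is the same in both arguments --- upper semicontinuity to pass from the moving cones to the frozen one, lower semicontinuity to go back --- but your decomposition buys a fully written-out necessity direction and makes explicit that both halves of \eqref{cont2} are used, while the paper's version is shorter because it only needs the one implication relevant to sufficiency. One caveat, inherited from the statement rather than created by you: conditions \eqref{cont1piu} and \eqref{cont2piu} are asserted for every $\hat x\in\der(F)$, but your equivalence claim (like the paper's argument for Corollary \ref{coro3A}) uses semicontinuity of $x\mapsto\Tan^{+}(F,x)$ \emph{at} $\hat x$, which is only hypothesized for $\hat x\in F$; at an accumulation point outside $F$ the two estimates can genuinely differ (a logarithmic spiral accumulating at the origin satisfies \eqref{cont1piu} there but not \eqref{cont2piu}), so your parenthetical that such points are ``handled exactly as in the proof of Corollary \ref{coro3A}'' is optimistic --- that proof simply restricts attention to $\hat x\in F\cap\der(F)$, and the statement should be read accordingly.
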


In the case where the upper tangent cones $\Tan^{+}(F, x)$ are vector spaces, condition \eqref{cont2piu} means that the angle between the straight-line passing through two distinct points $y$ and $x$ of $F$ and the tangent vector space to $F$ at $x$   tends to zero as $x$ and $y$ tend to $\hat{x}$ (see fig.\,4 above).

\begin{proof}[\textbf{\emph{Proof}}] \emph{Necessity}: it is known. \emph{Sufficiency}.
 We must prove \eqref{mainlab2}. Hence, it is enough to show that, for every $\hat x\in \der (F)$,   the following set inclusion holds
\begin{equation}\label{concoro3A}\pTan^+(F,\hat x)\cap\{v\in\reals^{n}: \|v\|=1\}\subset \Li_{F\ni y\to \hat x}\Tan^{+}(F,y).
\end{equation}
 To prove \eqref{concoro3A} fix $\hat x\in \der(F)$ and $v\in \pTan^+(F,\hat x)$ with $\|v\|=1$. By \eqref{paraseq} there exist sequences $\{x_{m}\},\{y_{m}\}_{m}\subset F$ converging to $\hat x$ such that $\lim_{m\to\infty}\frac{y_{m}-x_{m}}{\|y_{m}-x_{m}\|}=v$. By  \eqref{cont2piu} and the following triangular inequality
\[\dist(v,\Tan^{+}(F,x_{m}))\le \frac{\dist(y_{m},x_{m}+\Tan^{+}(F,x_{m}))}{\|y_{m}-x_{m}\|}+\|v-\frac{y_{m}-x_{m}}{y_{m}-x_{m}}\|,
\]
we have that $\lim_{m\to\infty}\dist(v,\Tan^{+}(F,x_{m}))=0$; hence,  $v\in\Ls_{F\ni x\to\hat{x}}\Tan^{+}(F,x)$. Thus, from \eqref{cont2} it follows that $v\in \Li_{F\ni y\to \hat x}\Tan^{+}(F,y)$, as \eqref{concoro3A} requires. \footnote{In the proof of Corollary we have show that condition \eqref{cont2piu} imply the following set inclusion: ``$\pTan^{+}(F,\hat{x})\subset\Ls_{x\to\hat x}\Tan^{+}(F,x)$ for every $\hat{x}\in F$''. The converse also holds.}
\end{proof}

Both old and modern  characterization of $C^{1}$-manifolds can be deduced from  four-cones concidence theorem \ref{theomain};  as example, we state and prove the following theorem, due to  \textsc{Tierno} (see {\cite[(1997)]{tiernoA}, \cite[(2000)]{tiernoB}}).

\begin{theorem} [\textbf{Tierno's theorem}]\label{corotierno} A non-empty set $F$ of $\reals^{n}$  is a $d$-dimensional $C^{1}$-manifold  if and only if $F$ is locally compact  and the upper tangent and  upper paratangent cones to $F$ coincide and  are $d$-dimensional vector spaces \footnote{The $d$-dimensionality condition cannot be dropped in \eqref{tiernoC}. In fact, define $F:=\{x\in\reals^{n}:$ either $\|x\|=0$ or $\frac{1}{\|x\|}\in\naturals \}$. Then $\Tan^{+}(F,0)=\pLTan^{+}(F,0)=\reals^{n}$; moreover, for every $x\in F$ with $\|x\|\ne0$, one has $\Tan^{+}(F,x)=\pLTan^{+}(F,x)=\{v\in\reals^{n}:\langle v,x\rangle=0\}$ and $\dim(\pLTan^{+}(F,x))=n-1$. Notice that $F$ is a $C^{1}$-manifold only at every $x\ne0$.} at every point , i.e.,
\begin{equation}\label{tiernoC}
\Tan^{+}(F,x)=\pLTan^{+}(F,x)\text{ and } \dim(\LTan^{+}(F,x))=d\text{ for every }x\in F.
\end{equation}
 
\end{theorem}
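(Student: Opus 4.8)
The plan is to deduce Tierno's theorem from the global four-cones coincidence theorem (Theorem~\ref{theomain}) in its equivalent form \eqref{mainlab2}, with the help of Proposition~\ref{proppre} and the continuity dichotomy for $\GG(\reals^{n},d)$-valued maps (Theorem~\ref{theoconv}). The necessity is read off directly from Proposition~\ref{proppre}: a $d$-dimensional $C^{1}$-manifold is locally compact by property~\eqref{pre0}, has all four tangent cones equal by property~\eqref{pre3}, and has $\Tan^{+}(F,x)$ a vector space with $\dim(\Tan^{+}(F,x))=\dim(F,x)=d$ by property~\eqref{pre2}; since $\Tan^{+}(F,x)=\pTan^{+}(F,x)$ is already a vector space it coincides with both $\LTan^{+}(F,x)$ and $\pLTan^{+}(F,x)$, which is exactly \eqref{tiernoC}.

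For the sufficiency, the first observation is that \eqref{tiernoC} amounts to saying that $\Tan^{+}(F,x)=\pTan^{+}(F,x)$ is a $d$-dimensional vector space at \emph{every} $x\in F$: the set $\pLTan^{+}(F,x)$ is a vector space (a linear hull), so the inclusions $\Tan^{+}(F,x)\subset\pTan^{+}(F,x)\subset\pLTan^{+}(F,x)$ of \eqref{4cones} together with the hypothesis $\Tan^{+}(F,x)=\pLTan^{+}(F,x)$ force $\Tan^{+}(F,x)=\pTan^{+}(F,x)=\pLTan^{+}(F,x)$; then $\LTan^{+}(F,x)=\Tan^{+}(F,x)$ has dimension $d$. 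If $d=0$ this means $\Tan^{+}(F,x)=\{0\}$ for all $x$, so by \eqref{coll6} every point of $F$ is isolated and $F$ is trivially a $0$-dimensional $C^{1}$-manifold; hence I may assume $d\ge 1$ and consider the well-defined vector-space-valued map $\tau:F\to\GG(\reals^{n},d)$, $\tau(x):=\Tan^{+}(F,x)$.

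The heart of the argument is to show that $\tau$ is continuous. Since $\Tan^{+}(F,x)=\pTan^{+}(F,x)$ for every $x$, Bouligand's upper semicontinuity of the upper paratangent cone \eqref{coll2} reads $\Ls_{F\ni x\to\hat x}\tau(x)\subset\pTan^{+}(F,\hat x)=\tau(\hat x)$ for every $\hat x\in F$, so $\tau$ is upper semicontinuous. By Theorem~\ref{theoconv} (which requires $d\ge1$), lower semicontinuity, upper semicontinuity and continuity of a $\GG(\reals^{n},d)$-valued map are equivalent, so $\tau$ is also lower semicontinuous: $\tau(\hat x)\subset\Li_{F\ni y\to\hat x}\tau(y)$ for every $\hat x$. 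Invoking Cornet's formula \eqref{coll4} (valid since $F$ is locally compact), this gives $\pTan^{+}(F,\hat x)=\tau(\hat x)\subset\Li_{F\ni y\to\hat x}\Tan^{+}(F,y)=\pTan^{-}(F,\hat x)$, which is precisely condition \eqref{mainlab2}; hence Theorem~\ref{theomain} yields that $F$ is a $C^{1}$-manifold, and it is $d$-dimensional because $\dim(F,x)=\dim(\Tan^{+}(F,x))=d$ for every $x$ by property~\eqref{pre2} of Proposition~\ref{proppre}.

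I expect the step ``$\tau$ upper semicontinuous $\Rightarrow$ $\tau$ continuous'' to carry the real weight: the genuinely useful ingredient is the collapse of the three semicontinuity notions for maps into a fixed Grassmannian $\GG(\reals^{n},d)$ (Theorem~\ref{theoconv}), which upgrades Bouligand's cheap one-sided statement, under the coincidence hypothesis, to full continuity of the tangent-space field --- and this is what feeds the lower-limit inclusion demanded by Theorem~\ref{theomain}. Two points to keep an eye on: the constant-dimension hypothesis is indispensable (compare the counterexample in the footnote to the statement), and the degenerate case $d=0$ must be treated separately since Theorem~\ref{theoconv} is stated only for $d\ge1$.
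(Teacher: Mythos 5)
Your proof is correct and follows essentially the same route as the paper's: necessity from Proposition \ref{proppre}, and sufficiency by combining Bouligand's upper semicontinuity \eqref{coll2} with Cornet's formula \eqref{coll4} and the Grassmannian rigidity of Corollary \ref{coroconv2} (which you invoke in its repackaged form, Theorem \ref{theoconv}) to verify the hypothesis of Theorem \ref{theomain}. The only cosmetic differences are your explicit treatment of the degenerate case $d=0$ --- which the paper silently skips even though Corollary \ref{coroconv2} is stated only for $d\ge1$ --- and your use of semicontinuity language in place of the raw $\Ls/\Li$ inclusions.
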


This theorem provides an efficacious test for  visual geometrical reconnaissance of $C^{1}$-manifolds. In fact, it follows that $F$ is a $d$-dimensional $C^{1}$-manifold  if and only if \begin{enumerate}
\item\labelpag{tiernoTest}  at  every point of $F$, the upper tangent vectors to $F$   form a $d$-dimensional vector space which is paratangent  in traditional sense to $F$.\end{enumerate}
In symbols, by Proposition \ref{propparatangtrad-anal} this condition becomes
\begin{eqnarray}\label{tiernoTest2}
&\Tan^{+}(F,x)=\LTan^{+}(F,x),\quad \dim(\LTan^{+}(F,x))=d\quad\text{ and}\\
&\pTan^{+}(F,x)\subset \LTan^{+}(F,x)\quad\text{for every  }x\in F.\qquad\quad\nonumber
\end{eqnarray}

\begin{proof}[\textbf{\emph{Proof}}] \emph{Necessity}. By Proposition \ref{proppre}, it is obvious. \emph{Sufficiency}.  By Theorem \ref{theomain}  it is enough to show that  $\pTan^{-}(F,x)=\pTan^{+}(F,x)$ for every $x\in F$.  The first equality in $\eqref{tiernoC}$ means:
\[\Tan^{+}(F,x)=\LTan^{+}(F,x)=\pTan^{+}(F,x)=\pLTan^{+}(F,x)
\leqno(*1)\]
for every $x\in F$. Hence, by \eqref{coll2} and \eqref{coll4} we have 
\[
\pLTan^{+}(F,x)\supset\Ls_{F\ni y\to x}\pLTan^{+}(F,y)
\leqno(*2)\]
and \[
\pTan^{-}(F,x)=\Li_{F\ni y\to x}\pLTan^{+}(F,y),
\leqno(*3)\] 
for every $x\in F$.  By $\eqref{tiernoC}$ the vector spaces $\pLTan^{+}(F,x)$ and $\pLTan^{+}(F,y)$ have the same dimension; hence, from $(*2)$ and Corollary \ref{coroconv2} it follows that
\[\pLTan^{+}(F,x)=\Li_{F\ni y\to x}\pLTan^{+}(F,y).
\leqno(*4)\]
Therefore, by  $(*3)$ and $(*4)$ we obtain that $\pTan^{-}(F,x)=\pTan^{+}(F,x)$ for every $x\in F$, as required. \end{proof}

\appendix

\section{Von Neumann and  alternative definitions of lower tangent  cones}\label{app-von}

Five years before the rediscovery of the upper tangent cone by \textsc{Bouligand} and \textsc{Severi}, in \cite[(1929)]{vonN} \textsc{von Neumann} \footnote{\textsc{von Neumann}'s manuscript was received by Mathematische Zeitschrift  February 2, 1927.}
  showed that a closed matrix group $\calG$ is a Lie group by describing the associated Lie algebra (called \emph{Infinitesimalgruppe})  as  the set of all upper tangent vector at unit $E$ to the group $\calG$. The elements of $\calG$ are non-singular real matrices $n\times n$; hence, being $\calG$  a subset of Euclidean space $M_{n}(\R)$ of  all  real matrices $n\times n$, the upper tangent vectors are elements of $M_{n}(\R)$.

More explicitly and clearly,  \textsc{von Neumann} define an  \emph{Infinitesimalgruppe} $\calJ$ of $\calG$ as the set of all matrices $V\in M_{n}(\R)$ such that there exist an infinitesimal sequence $\{\varepsilon_{m}\}_{m}\subset\reals_{++}$ and a sequence $\{A_{m}\}_{m\in\naturals}\subset\calG$ such that 
\begin{equation}\lim_{m\to\infty}\frac{A_{m}-E}{\varepsilon_{m}}=V.\label{vonUpper}
\end{equation}

Moreover, to show that the Inifinitesimalgruppe $\calJ$ is a Lie algebra, \textsc{von Neumann} proved that, for every matrix $V$ belonging  to the {Infinitesimalgruppe} $\calJ$, there exists a family of matrices  $\{B_{\lambda}\}_{\lambda\in (0,1]}\subset M_{n}(\R)$ such that
\begin{equation}\lim_{\lambda\to0^{+}}\frac{B_{\lambda}-E}{\lambda}=V\label{vonLower1}.
\end{equation}
It is well known that vectors $V$ verifying \eqref{vonLower1}, constitute the lower tangent cone $\Tan^{-}(\calG,E)$.   Therefore, the definition \eqref{vonUpper} and property \eqref{vonLower1} can be resumed by
\begin{equation}\calJ:=\Tan^{+}(\calG, E)\text{ and }\Tan^{+}(\calG, E) =\Tan^{-}(\calG, E). \label{conRes}
\end{equation}

Crucial properties of general Lie groups (as ``the infinitesimal group $\calJ$ is mapped into $\calG$ by $\exp$'' or ``some neighborhood of  $E$ in $\calG$ is mapped into the infinitesimal group $\calJ$ by $\log$'') are verified by \textsc{von Neumann} by the following immediate consequence of \eqref{vonLower1}: for every $V\in \calJ$, there exists a sequence $\{A_{m}\}_{m\in\naturals}$ such that
\begin{equation}\lim_{m\to\infty}m(A_{m}-E)=V\label{vonLower2}.
\end{equation}
Tangent vectors in this sense are lower tangent, and conversely. In fact
\begin{proposition}\label{propLower}  Let $F$ be a subset of $\reals^{n}$ and let $x\in F$. Then 
\begin{equation}
\Tan^{-}(F,x)=\Li_{\naturals\ni m\to\infty}m(F-x).
\end{equation}

\end{proposition}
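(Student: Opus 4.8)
The plan is to reduce the statement to the distance reformulations already available in Section~\ref{sec-tan-paratan}. First I would record the elementary identity
\begin{equation*}
\dist\bigl(v,m(F-x)\bigr)=m\,\dist\bigl(x+\tfrac{v}{m},F\bigr)\qquad(m\in\naturals,\ m\ge1,\ v\in\reals^{n}),
\end{equation*}
which is the very same computation that yields $\dist(v,\tfrac1\lambda(F-x))=\tfrac1\lambda\dist(x+\lambda v,F)$. Combined with \eqref{lowerlimit} and \eqref{lowerdist}, this turns the claim into a purely real-analytic equivalence: for a fixed $v\in\reals^{n}$, one has $\lim_{m\to\infty}m\,\dist(x+\tfrac{v}{m},F)=0$ if and only if $\lim_{\lambda\to0^{+}}\tfrac1\lambda\dist(x+\lambda v,F)=0$.

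Of the two inclusions, $\Tan^{-}(F,x)\subset\Li_{\naturals\ni m\to\infty}m(F-x)$ is immediate, since convergence of $\tfrac1\lambda\dist(x+\lambda v,F)$ to $0$ as $\lambda\to0^{+}$ trivially restricts to the sequence $\lambda=\tfrac1m$. The content is in the opposite inclusion. For it, fix $v$ with $\lim_{m}m\,\dist(x+\tfrac{v}{m},F)=0$, let $\lambda\in(0,1)$, and set $m:=\lfloor1/\lambda\rfloor$, so that $\tfrac1{m+1}<\lambda\le\tfrac1m$ and $m\to\infty$ as $\lambda\to0^{+}$. The triangle inequality gives $\dist(x+\lambda v,F)\le\dist(x+\tfrac{v}{m},F)+|\lambda-\tfrac1m|\,\|v\|$, and dividing by $\lambda$ (using $\tfrac1\lambda<m+1$ together with $|\lambda-\tfrac1m|\le\tfrac1m-\tfrac1{m+1}=\tfrac1{m(m+1)}$) one obtains
\begin{equation*}
\tfrac1\lambda\,\dist(x+\lambda v,F)\le\tfrac{m+1}{m}\Bigl(m\,\dist\bigl(x+\tfrac{v}{m},F\bigr)\Bigr)+\tfrac{\|v\|}{m}.
\end{equation*}
Both summands tend to $0$ as $\lambda\to0^{+}$ (the first because $\tfrac{m+1}{m}\to1$ and $m\,\dist(x+\tfrac{v}{m},F)\to0$, the second obviously), hence $\lim_{\lambda\to0^{+}}\tfrac1\lambda\dist(x+\lambda v,F)=0$, i.e.\ $v\in\Tan^{-}(F,x)$, which finishes the proof.

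The only subtle point — and the reason the statement is not entirely formal — is that one cannot compare $\tfrac1\lambda(F-x)$ with a single $m(F-x)$ directly: the interpolation works precisely because the mesh gap $\tfrac1m-\tfrac1{m+1}$ is of order $m^{-2}$, so that after amplification by $\tfrac1\lambda=O(m)$ the perturbation term is still $O(m^{-1})$. Note that no (local) compactness of $F$ is used anywhere, in contrast with the alternative descriptions of the lower \emph{para}tangent cone discussed in this appendix.
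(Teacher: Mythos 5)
Your proof is correct and follows essentially the same route as the paper: the paper proves a slightly more general lemma (allowing an extra parameter $y\to x$ so as to cover Proposition \ref{propLowerP} simultaneously), but its core step is the very same triangle-inequality interpolation between the real scaling parameter and a nearby integer, with the resulting error term of order $\|v\|/m$ tending to zero. The only cosmetic difference is that the paper phrases the comparison for $\lambda\to+\infty$ via $\dist(v,\lambda\Phi(y))=\lambda\,\dist(v/\lambda,\Phi(y))$ rather than your equivalent $\lambda\to0^{+}$ formulation.
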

In terms of sequences, 
 $v\in \Tan^{-}(F,x)$ if and only if there exists a sequence $\{x_{m}\}_{m\in\naturals}\subset F$ (converging to $x$) such that 
 \begin{equation}
 \lim_{m\to\infty}m(x_{m}-x)=v.
 \end{equation}

Analogously, with respect the lower paratangent cones we have

\begin{proposition}\label{propLowerP} Let $F$ be a subset of $\reals^{n}$ and let $x\in F$. Then 
\begin{equation}
\pTan^{-}(F,x)=\Li_{\naturals\ni m\rightarrow
\infty\atop F\ni y\rightarrow x}m\left( F-y\right).
\end{equation}

\end{proposition}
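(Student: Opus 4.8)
The plan is to prove the asserted equality by establishing the two inclusions separately, in each case converting the continuous blow\-up parameter $\lambda\to0^{+}$ into the integer parameter $m\to\infty$ (and back) by the floor\-function comparison $\lambda\approx 1/\lfloor 1/\lambda\rfloor$; this is the same device that proves Proposition \ref{propLower}, now carried out with the basepoint $y$ kept as a free variable. I would work throughout with the sequential form of both sides. By $\eqref{clarkeseq}$, $v\in\pTan^{-}(F,x)$ iff for every $\{\lambda_{m}\}_{m}\subset\reals_{++}$ with $\lambda_{m}\to0^{+}$ and every $\{y_{m}\}_{m}\subset F$ with $y_{m}\to x$ there is $\{x_{m}\}_{m}\subset F$, eventually, with $\tfrac{x_{m}-y_{m}}{\lambda_{m}}\to v$. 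Unwinding the sequential form of $\Li$ (as in $\eqref{lowerlimitseq}$) for the double lower limit exactly as in $\eqref{clarkeseq}$, $v\in\Li_{\naturals\ni m\to\infty,\,F\ni y\to x}m(F-y)$ iff the identical statement holds with $\{\lambda_{m}\}_{m}$ replaced by the sequence of reciprocals $\{1/k_{m}\}_{m}$ of an arbitrary integer sequence $k_{m}\to\infty$.

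The inclusion $\pTan^{-}(F,x)\subset\Li_{\naturals\ni m\to\infty,\,F\ni y\to x}m(F-y)$ is then immediate: given $v\in\pTan^{-}(F,x)$, integers $k_{m}\to\infty$ and $\{y_{m}\}_{m}\subset F$ with $y_{m}\to x$, put $\lambda_{m}:=1/k_{m}$ (well defined once $k_{m}\ge1$, i.e.\ eventually), an infinitesimal sequence in $\reals_{++}$; the defining property of $v$ supplies $\{x_{m}\}_{m}\subset F$ with $k_{m}(x_{m}-y_{m})=\tfrac{x_{m}-y_{m}}{\lambda_{m}}\to v$, which is exactly membership in the right\-hand set.

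For the substantive inclusion $\Li_{\naturals\ni m\to\infty,\,F\ni y\to x}m(F-y)\subset\pTan^{-}(F,x)$, fix $v$ in the left\-hand set and arbitrary sequences $\{\lambda_{m}\}_{m}\subset\reals_{++}$ with $\lambda_{m}\to0^{+}$ and $\{y_{m}\}_{m}\subset F$ with $y_{m}\to x$. I would set $k_{m}:=\lfloor 1/\lambda_{m}\rfloor$, so that $\tfrac{1}{k_{m}+1}<\lambda_{m}\le\tfrac{1}{k_{m}}$ eventually; hence $k_{m}\to\infty$, and multiplying by $k_{m}$,
\[
\frac{k_{m}}{k_{m}+1}<k_{m}\lambda_{m}\le1,\qquad\text{so}\qquad k_{m}\lambda_{m}\longrightarrow1 .
\]
Applying the hypothesis to the integer sequence $\{k_{m}\}_{m}$ and the given $\{y_{m}\}_{m}$ yields $\{x_{m}\}_{m}\subset F$, eventually, with $k_{m}(x_{m}-y_{m})\to v$, whence
\[
\frac{x_{m}-y_{m}}{\lambda_{m}}=\frac{1}{k_{m}\lambda_{m}}\cdot k_{m}(x_{m}-y_{m})\longrightarrow1\cdot v=v ,
\]
so that $v\in\pTan^{-}(F,x)$ by $\eqref{clarkeseq}$. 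Taking $y=x$ throughout specialises this argument to a proof of Proposition \ref{propLower}.

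I expect no serious obstacle; the only points needing care are organisational. First, the hypothesis $v\in\Li_{\naturals\ni m\to\infty,\,F\ni y\to x}m(F-y)$ must be invoked for the \emph{particular} integer sequence $k_{m}=\lfloor 1/\lambda_{m}\rfloor$ paired with the \emph{arbitrary but prescribed} sequence $\{y_{m}\}_{m}$, which is legitimate precisely because the double lower limit quantifies over all admissible pairs of sequences simultaneously. Second, one must observe that the distortion factor $k_{m}\lambda_{m}$ tends to $1$, which the squeeze $\tfrac{k_{m}}{k_{m}+1}<k_{m}\lambda_{m}\le1$ delivers. The rest is routine: disposing of the finitely many $m$ with $k_{m}=0$ by the ``eventually'' clause, and noting that $0\in k_{m}(F-y_{m})$, so these sets are never empty.
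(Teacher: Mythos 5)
Your proof is correct and rests on the same device as the paper's: the paper derives Proposition \ref{propLowerP} as an immediate consequence of a lemma identifying $\Li_{\reals\ni\lambda\to+\infty,\,F\ni y\to x}\lambda(F-y)$ with $\Li_{\naturals\ni m\to\infty,\,F\ni y\to x}m(F-y)$, proved by truncating $\lambda$ to a nearby integer and using that the ratio tends to $1$. You carry out exactly this truncation (with $k_{m}=\lfloor 1/\lambda_{m}\rfloor$ and the squeeze $k_{m}/(k_{m}+1)<k_{m}\lambda_{m}\le1$) directly in the sequential characterization rather than via the distance-function triangle inequality, which is only a difference of bookkeeping.
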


In terms of sequences, 
 $v\in \pTan^{-}(F,x)$ if and only if, for every sequence $\{y_{m}\}_{m}\subset F$ converging to $x$, there exists a sequence $\{x_{m}\}_{m}\subset F$ (converging to $x$) such that \[\lim_{m\to\infty}m(y_{m}-x_{m})=v.\]

The proof of Propositions \ref{propLower} and \ref{propLowerP} 
is an immediate consequence of the following lemma.

\begin{lemma}Let $F\subset\reals^{n}$, $x\in F$ and $\Phi:F\to \calP(\reals^{n})$. Then
\begin{equation}
\Li_{\reals\ni\lambda\to+\infty\atop F\ni y\to x}\lambda\Phi(y)=\Li_{\naturals\ni m\to\infty\atop F\ni y\to x}m\Phi(y)
\end{equation}
\end{lemma}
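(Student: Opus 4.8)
The plan is to unwind the definition \eqref{lowerlimit} of the lower limit and to compare, by two separate inclusions, the sets $L_{\reals}:=\Li_{\reals\ni\lambda\to+\infty,\ F\ni y\to x}\lambda\Phi(y)$ and $L_{\naturals}:=\Li_{\naturals\ni m\to\infty,\ F\ni y\to x}m\Phi(y)$. Unravelled, $v\in L_{\reals}$ means that for every $\varepsilon\in\reals_{++}$ there are $\Lambda,\delta\in\reals_{++}$ with $\dist(v,\lambda\Phi(y))<\varepsilon$ whenever $\lambda>\Lambda$ and $y\in F\cap\B_{\delta}(x)$, while $v\in L_{\naturals}$ is the same assertion with $\lambda$ restricted to integers. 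With this reading the inclusion $L_{\reals}\subseteq L_{\naturals}$ is immediate: the condition defining $L_{\reals}$ quantifies over \emph{all} real $\lambda>\Lambda$, in particular over all integers $m>\Lambda$ (use the integer threshold $\lceil\Lambda\rceil$).

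For the converse $L_{\naturals}\subseteq L_{\reals}$ I would fix $v\in L_{\naturals}$ and $\varepsilon\in\reals_{++}$. Applying the hypothesis with $\varepsilon/2$ produces $M\in\naturals$ and $\delta\in\reals_{++}$ such that $\dist(v,m\Phi(y))<\varepsilon/2$ for every integer $m>M$ and every $y\in F\cap\B_{\delta}(x)$ (so in particular $\Phi(y)\neq\emptyset$ for such $y$). Then choose an integer $M'>M$ with $(\|v\|+\varepsilon/2)/M'<\varepsilon/2$ and put $\Lambda:=M'$. For a real $\lambda\geq\Lambda$ set $m:=\lfloor\lambda\rfloor$, an integer with $m\geq M'>M$, and $t:=\lambda-m\in[0,1)$; for $y\in F\cap\B_{\delta}(x)$ choose $a\in\Phi(y)$ with $\|ma-v\|<\varepsilon/2$, whence $\|a\|\leq(\|ma-v\|+\|v\|)/m<(\|v\|+\varepsilon/2)/M'<\varepsilon/2$. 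Then $\lambda a\in\lambda\Phi(y)$ and
\[\|\lambda a-v\|\ \leq\ \|ma-v\|+t\,\|a\|\ <\ \tfrac{\varepsilon}{2}+\tfrac{\varepsilon}{2}\ =\ \varepsilon ,\]
so $\dist(v,\lambda\Phi(y))<\varepsilon$; as $\varepsilon$ was arbitrary, $v\in L_{\reals}$.

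The step I expect to be the crux is precisely this passage from integer to real dilations, since scaling of sets is not additive and $\lambda\Phi(y)$ cannot be decomposed as $m\Phi(y)+t\Phi(y)$; the device is that a witness $a\in\Phi(y)$ of $ma\approx v$ is automatically of size $O(\|v\|/m)$, so $\lambda a=ma+ta$ lies within $t\|a\|$ of $ma$, and this is made smaller than $\varepsilon/2$ just by enlarging the threshold on $m$ — everything else being routine bookkeeping of the constants $M,M',\Lambda,\delta$ to keep the bound uniform in $y$. Finally, rewriting $\Li_{\lambda\to0^{+}}\tfrac{1}{\lambda}(\,\cdot\,)$ as $\Li_{\mu\to+\infty}\mu(\,\cdot\,)$ by the substitution $\mu=1/\lambda$ and applying the lemma (with $\Phi$ constant in the one case, $\Phi(y)=F-y$ in the other) yields $\Tan^{-}(F,x)=\Li_{\naturals\ni m\to\infty}m(F-x)$ and $\pTan^{-}(F,x)=\Li_{\naturals\ni m\to\infty,\ F\ni y\to x}m(F-y)$, i.e. Propositions \ref{propLower} and \ref{propLowerP}.
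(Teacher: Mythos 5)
Your proof is correct and follows essentially the same route as the paper's: both reduce the real parameter $\lambda$ to its integer part and bound the resulting error by a quantity of order $\|v\|/\lambda$, uniformly in $y$ (the paper packages this as the single inequality $\dist(v,\lambda\Phi(y))\le\|v\|\bigl(1-\tfrac{\lambda}{\lfloor\lambda\rfloor}\bigr)+\tfrac{\lambda}{\lfloor\lambda\rfloor}\dist(v,\lfloor\lambda\rfloor\Phi(y))$ via the homogeneity of $\dist$, where you instead track an explicit witness $a\in\Phi(y)$ of small norm). The difference is purely presentational.
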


\begin{proof}[\emph{\textbf{Proof}}] The set inclusion $\subset$ is obvious. For proving the converse set inclusion, choose an arbitrary element $v\in \Li_{\naturals\ni m\to\infty\atop F\ni y\to x}m\Phi(y)$. By the definition of lower limit we have that
\begin{equation}\label{lemmaproof1}
\lim_{\naturals\ni m\to\infty\atop F\ni y\to x}\dist(v,m\Phi(y))=0
\end{equation}
As usual, for every real number $\lambda$ let $\lfloor \lambda\rfloor$ denote the least integer number greater than or equal to $\lambda$. Observe that $\lim_{\lambda\to+\infty}\frac{\lambda}{\lfloor \lambda\rfloor}=1$, because $|\lambda-\lfloor \lambda\rfloor|\le 1$. Therefore, by \eqref{lemmaproof1} and the following triangular inequality
\begin{eqnarray}
&\dist(v,\lambda \Phi(y))=\lambda\dist(\dfrac{v}{\lambda}, \Phi(y))\le \lambda\left(\left\|\dfrac{v}{\lambda}-\dfrac{v}{\lfloor\lambda\rfloor}\right\|+\dist(\dfrac{v}{\lfloor\lambda\rfloor}, \Phi(y))\right)\\
&\le\|v\|\left(1-\dfrac{\lambda}{\lfloor \lambda\rfloor}\right)+\dfrac{\lambda}{\lfloor\lambda\rfloor}\dist(v,\lfloor \lambda\rfloor\Phi(y)),
\nonumber
\end{eqnarray}
we have 
\begin{equation}
\lim_{\reals\ni \lambda\to\infty\atop F\ni y\to x}\dist(v,\lambda\Phi(y))=0,
\end{equation}
 that is $v\in \Li_{\reals\ni\lambda\to+\infty\atop F\ni y\to x}\lambda\Phi(y)$, as required. \end{proof}

\begin{example}\label{appEx1}
\emph{Let $\{x_{m}\}_{m}\subset\reals_{++}$ be an  infinitesimal decreasing sequence.  Posit $S:=\{x_{m}:m\in\naturals\}$. Then 
\begin{equation}\label{r1}1\in\Tan^{-}(S,0)\iff \exists {\left\{m_{k}\right\} _{k}\subset \mathbb{N}%
} \text{ such that } \lim_{k\to\infty}\dfrac{x_{m_{k}}}{1/k}=1;
\end{equation}
\begin{equation}\label{r2}1\in\Tan^{-}(S,0)\iff\lim_{m\to\infty}\dfrac{x_{m+1}}{x_{m}}=1. \footnote{To prove \eqref{r2} observe that  $1\in\Tan^{-}(S,0)$ amounts to $\lim_{\lambda\to0^{+}}\frac{\dist(\lambda,S)}{\lambda}=0$. Hence, for $\lambda_{m}:=\frac{x_{m}+x_{m+1}}{2}$, $\lim_{m\to\infty}\frac{\dist(\lambda_{m},S)}{\lambda_{m}}=0$. Since $\frac{\dist(\lambda_{m},S)}{\lambda_{m}}=\frac{x_{m}-x_{m+1}}{x_{m}+x_{m+1}}$ and  $0\le \frac12(1-\frac{x_{m+1}}{x_{m}})\le \frac{x_{m}-x_{m+1}}{x_{m}+x_{m+1}}$, we have $\lim_{m\to\infty}\frac{x_{m+1}}{x_{m}}=1$, as required.  
Conversely, assume $\lim_{m\to\infty}\frac{x_{m+1}}{x_{m}}=1$. 
For every $k\in\naturals$, define $m_{k}:= \max\{m\in\naturals: \frac{1}{k}\le x_{m}\}$. Then $\lim_{k\to\infty}\dfrac{x_{m_{k}}}{1/k}=1$. Hence \eqref{r1} entails $1\in\Tan^{-}(S,0)$.
}
\end{equation}
In particular, for $x_{m}:=m!$, $\Tan^{-}(S,0)=\{0\}$ (see Example 2.1  in \textsc{Dolecki}, \textsc{Greco} \cite[(2011),\,p.\,305]{DG-Peano}).}\hfill$\Box$\end{example}

The following two examples were commented in \textsc{Dolecki}, \textsc{Greco} \cite[(2011), p. 305]{DG-Peano2}.

\begin{example}\label{appEx2}\emph{ Let $A$ be denote the set $\{(t,t\sin(1/t)): t\in\reals\setminus\{0\}\}$. Then $\Tan^-(A,(0,0))=\{(h,k)\in\R^2:|k|\le |h|\}.$
\footnote{Fix  $|\alpha|\le 1$ and let $\theta$ be $\arcsin[\alpha]$. Then
$(x_{m},y_{m}):=(\frac{1}{\theta+2m\pi},\frac{1}{\theta+2m\pi}\sin(\frac{1}{\frac{1}{\theta+2m\pi}}))=(\frac{1}{\theta+2m\pi},\frac{1}{\theta+2m\pi}\alpha)\in A$
for every natural number $m$. Since
$\lim_{m}m(x_{m},y_{m})=(\frac{1}{2\pi}, \frac{1}{2\pi}\alpha)
$,
by Proposition \ref{propLower} we have 
\[(1,\alpha)\in\Tan^{-}(A,(0,0)).
\leqno(*)\]
Analogously, 
$(x_{m},y_{m}):=(-\frac{1}{\theta+2m\pi},-\frac{1}{\theta+2n\pi}\sin(-\frac{1}{\frac{1}{\theta+2m\pi}}))=(-\frac{1}{\theta+2m\pi},\frac{1}{\theta+2m\pi}\alpha)\in A 
$
for every natural number $m$. Since
$\lim_{m}m(x_{m},y_{m})=(-\frac{1}{2\pi}, \frac{1}{2\pi}\alpha)
$
by Proposition \ref{propLower} we have  
\[(-1,\alpha)\in\Tan^{-}(A,(0,0)).
\leqno(**)\]
Now, from $(*)$ and $(**)$ if follows that
$\Tan^-(A,(0,0))\supset\{(h,k)\in\R^2:|k|\le |h|\}$;
the opposite inclusion is due to the fact that
$\Tan^-(A,(0,0))\subset\Tan^+(A,(0,0))=\{(h,k)\in\R^2:|k|\le |h|\}$.
}
}\hfill$\Box$\end{example}

\begin{example}\label{3}\emph{Let $B$ be denote the set $\{(t,-t): t\in\reals,\, t<0\}\cup\{(1/m,1/m): m\in\naturals_{1}\}$. Then $\Tan^-(B,(0,0))=\{t(1,1):t\in\R_+\}\cup\{t(-1,1):t\in\R_+\}.$
\footnote{
By applying Proposition \ref{propLower} to two sequences $\{\frac{1}{m},\frac{1}{m})\}_m$ e $\{-\frac{1}{m},\frac{1}{m})\}_m$  we have that $(\pm 1,1)\in \Tan^-(B,(0,0))$. Hence $\Tan^-(B,(0,0))=\{t(1,1):t\in\R_+\}\cup\{t(-1,1):t\in\R_+\}$, since $\Tan^-(B,(0,0))\subset \Tan^+(B,(0,0))=\{t(1,1):t\in\R_+\}\cup\{t(-1,1):t\in\R_+\}$.
}
}\hfill$\Box$\end{example}

\section{From Fréchet problem to modern characterizations of smooth manifold}\label{app-history}

In  
\cite[(1887), vol.\,III p.\,587]{jordanB1} \textsc{Jordan} defines  a curve as a continuous image of an interval.  By means of a notion of rectifiability, \textsc{Jordan} gives mathematical concreteness and coherence to the usage of the term ``length'' and, moreover, by parametrization of sets he provides fresh impetus to the study of local and global properties of sets. 

Surprisingly for \textsc{Jordan}'s epoch,  continuous curves  did not fit to common intuition on 1-dimensionality and null area of their  \emph{loci}
. In fact, \textsc{Peano} in \cite[1890]{peano_curva} constructed  a  continuous curve filling a square. Clearly, \textsc{Peano}'s curve is not simple. An example  of a simple continuous curve of non-null area was given by 
 \textsc{Lebesgue} \cite[(1903)]{lebesgue_curva} and by \textsc{Osgood} \cite[(1903)]{osgood_curva}.  \textsc{Nalli} \cite[(1911)]{nalli1,nalli2} characterized  the locus  of simple continuous plane curves by means of \emph{local connectedness} (a new notion, introduced by  \textsc{Nalli}). Three years later,  \textsc{Mazurkiewicz} \cite[(1914)]{mazurkiewicz} and \textsc{Hahn} \cite[(1914)]{hahn} proved the celebrated theorem: ``A set of Euclidean space is a continuous image of a compact interval if and only if it is a locally connected continuum''.

 In absence  of differentiable properties, the continuity alone does not capture intuitive curve aspects. Aware of this lack, 
 to recover geometric properties of the locus of a continuous curve,  
 \textsc{Fréchet} (see \cite[(1925), p.\,292-3]{frechet1925} and \cite[(1928), p.\,152-154]{frechet1928})  proposed the following \emph{problem}: \emph{Find a non-singular parametric representation\,\footnote{Here and in the sequel, ``\emph{non-singular parametric representation}'' stands for ``differentiable parametric representation with  everywhere non-null derivative''.} of the locus  of a continuous curve having tangent straight-line at  every point}. Let's quote \textsc{Fréchet} from the first reference:
\begin{quotation} On sait qu'une courbe continue sans point multiple et ayant une tangente déterminée en chaque point peut avoir une représentation paramétrique constituée de fonctions dérivables $x(t)$, $y(t)$, $z(t)$, mais dont les dérivées peuvent exceptionnellement s'annuler à la fois [\dots] 

Ce qui précède nous encourage à proposer la question suivante, dont la solution à première vue ne paraît  pas douteuse:

\emph{Si une courbe continue est douée partout \emph{(ou en un point)} d'une tangente, peut-on la représenter paramétriquement par des fonctions dérivables partout \emph{(ou au point correspondant)}?} Bien entendu, dans cet énoncé, la tangente est définie géométriquement, c'est-à-dire comme limite d'une corde.

\end{quotation}

\textsc{Fréchet}'s confidence about a solution to his problem 
was deluded in 1926 by \textsc{Valiron} \cite[(1927)]{valiron_curva}. After making precise and explicit the meaning of both \emph{tangent half-straight-line}  and \emph{tangent straight-line},  \textsc{Valiron} gives the following proposition.

\begin{theorem}[\textsc{Valiron} {\cite[(1927), p.\,47]{valiron_curva}}]\label{theo-valiron1} If  a continuous curve  admits a continuously variable oriented tangent straight-line\,\footnote{Let us express definitions given by \textsc{Valiron} by means of vectors. Let $\gamma:I\to\reals^{n}$ be a continuous curve on an open interval $I$ of real numbers. For a given parameter $t\in I$, a half straight-line issued from $\gamma(t)$ along an unit vector $v(t)$, is  said to be an \emph{oriented tangent half-straight-line}, if $v(t)=\lim_{h\to0^{+}} \frac{\gamma(t+h)-\gamma(t)}{\|\gamma(t+h)-\gamma(t)\|}$. Moreover, if  $v(t)=\lim_{h\to0^{+}} \frac{\gamma(t+h)-\gamma(t)}{\|\gamma(t+h)-\gamma(t)\|}=\lim_{h\to0^{+}} \frac{\gamma(t)-\gamma(t-h)}{\|\gamma(t)-\gamma(t-h)\|}$, the straight-line through $\gamma(t)$ along the unit vector $v(t)$ is said to be  an \emph{oriented tangent straight-line}. If $\gamma$ admits  an {oriented tangent straight-line} at every point and the map $t\mapsto v(t)$  
is continuous,  then \textsc{Valiron} says that $\gamma$ admits a \emph{continuously variable oriented tangent straight-line}. 
} at its points, then it has 
a  non-singular continuously differentiable parametric representation.

\end{theorem}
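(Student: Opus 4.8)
The plan is to reduce Valiron's statement to a purely local graph representation of the curve and then to glue the local pieces by means of the intrinsic arc–length parameter. First I would \emph{localize}: fix $\hat t\in I$ and, after a rotation of $\reals^{n}$, assume $v(\hat t)=e_{1}$. By continuity of $t\mapsto v(t)$ there is $\delta>0$ with $\langle v(t),e_{1}\rangle>\tfrac12$ for $|t-\hat t|\le\delta$; writing $\gamma=(\gamma_{1},\dots,\gamma_{n})$, the two one–sided limits defining the oriented tangent straight–line at such a $t$ force $\gamma_{1}(t+h)-\gamma_{1}(t)$ to have the sign of $h$ for $|h|$ small, so $\gamma_{1}$ is locally strictly increasing at every point of $[\hat t-\delta,\hat t+\delta]$, hence strictly increasing there, and therefore restricts to a homeomorphism onto a compact interval $[a,b]$. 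Letting $\tau:[a,b]\to[\hat t-\delta,\hat t+\delta]$ be its inverse and $\psi:=\gamma\circ\tau$, one gets $\psi(x)=(x,\phi(x))$ with $\phi:=(\gamma_{2}\circ\tau,\dots,\gamma_{n}\circ\tau)$ continuous. (This is, in spirit, the graph representation of item \eqref{coll7}, except that I will want $C^{1}$ regularity, not merely a Lipschitz bound.)

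Next I would prove \emph{local $C^{1}$ regularity}. Fix $x\in[a,b]$, set $t:=\tau(x)$, and for $j\ge2$ and small $h\ne0$ divide numerator and denominator of $\dfrac{\gamma_{j}(t+h)-\gamma_{j}(t)}{\gamma_{1}(t+h)-\gamma_{1}(t)}$ by $\|\gamma(t+h)-\gamma(t)\|$: using that \emph{both} one–sided half–tangents at $t$ exist (the two sign changes cancel on the two sides), this ratio tends to $v_{j}(t)/v_{1}(t)$ as $h\to0$. Since $x':=\gamma_{1}(t+h)$ runs over a neighbourhood of $x$, this means precisely that $\phi$ is differentiable at $x$ with $\phi'(x)=v_{1}(\tau(x))^{-1}(v_{2}(\tau(x)),\dots,v_{n}(\tau(x)))$, and continuity of $\tau$ and $v$ (with $v_{1}>0$) makes $\phi'$ continuous. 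Hence $\psi=(\mathrm{id},\phi)$ is a $C^{1}$ curve with $\psi'(x)=(1,\phi'(x))=v_{1}(\tau(x))^{-1}v(\tau(x))\neq0$; equivalently, by Proposition \ref{proptangtrad-diff} and Lemma \ref{lemDue-strict}, $\graph(\phi)$ is a $1$–dimensional $C^{1}$–manifold carrying the same locus as $\gamma|_{[\hat t-\delta,\hat t+\delta]}$.

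Then I would \emph{globalize by arc length}. A $C^{1}$ curve on a compact interval is rectifiable, so $\gamma$ is rectifiable on every compact subinterval of $I$; fix $t_{*}\in I$ and let $s(t)$ be the length of $\gamma$ between $t_{*}$ and $t$, taken with the sign of $t-t_{*}$. Then $s:I\to\reals$ is continuous and non–decreasing, and it is strictly increasing because on each local chart arc length dominates the variation of the strictly monotone coordinate $\gamma_{1}$; so $s$ is an increasing homeomorphism of $I$ onto an open interval $J$, and $\widetilde\gamma:=\gamma\circ s^{-1}:J\to\reals^{n}$ has the same locus as $\gamma$. On the chart above, $s$ equals (up to an additive constant) the $C^{1}$ arc–length function $\ell(x)=\int_{a}^{x}\|\psi'\|$ of $\psi$, whose derivative $\ell'=\|\psi'\|$ is positive; hence $\widetilde\gamma$ coincides there with the arc–length reparametrization $\psi\circ\ell^{-1}$ of the non–singular $C^{1}$ curve $\psi$, is itself $C^{1}$, and has derivative $\psi'(x)/\|\psi'(x)\|=v(\tau(x))$. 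Thus $\widetilde\gamma$ is a continuously differentiable parametric representation of the locus of $\gamma$ with $\|\widetilde\gamma'\|\equiv1$ and $\widetilde\gamma'=v\circ s^{-1}$ continuous, so in particular non–singular.

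The hard part will be precisely this last, genuinely global step: the local reparametrizations are expressed over \emph{different} coordinate axes — and the transition from the ``$x_{i}$–parameter'' to the ``$x_{j}$–parameter'' may itself be singular, namely at points where $v_{j}=0$ — so the charts cannot be patched coordinate by coordinate; it is the chart–independent arc–length parameter $s$ that performs the patching, once the local $C^{1}$ representation has supplied local rectifiability and strict monotonicity of $s$. The remaining hypotheses enter in controlled ways: the \emph{two–sided} assumption (the oriented tangent straight–line, not merely the half–line) is used exactly once, to obtain two–sided differentiability of $\phi$, while continuity of $t\mapsto v(t)$ is used both to choose the local axis and to upgrade differentiability to class $C^{1}$.
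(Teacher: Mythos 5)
Your argument is correct, but note that the paper never actually proves Theorem~\ref{theo-valiron1}: it is quoted as a historical result of \textsc{Valiron} with only a citation, and the nearest the paper comes is its own ``Valiron theorem''~\ref{theomain2}, which is routed through the four-cones machinery of Theorem~\ref{theomain} and is itself stated without proof. So your proof cannot be matched against an in-paper argument; judged on its own it is sound and essentially reconstructs Valiron's original strategy. The two routes are genuinely different. The paper's set-theoretic route would require translating the hypothesis --- which is a statement about one-sided difference quotients \emph{along the parametrization} --- into a statement about the cones $\Tan^{+}$ and $\pTan^{\pm}$ of the locus $F=\gamma(I)$; that translation is not free, because the paratangent cone involves chords between two independently moving points $\gamma(t),\gamma(u)$, whereas the hypothesis only controls chords emanating from a fixed $\gamma(t)$, so one would still need something like your local monotone-coordinate argument to bridge the gap. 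Your direct route avoids the cones entirely: the localization (choosing the axis $e_{1}$ nearest to $v(\hat t)$ and using the two-sided half-tangent hypothesis to force strict monotonicity of $\gamma_{1}$) plays the role of item \eqref{coll7}, the difference-quotient computation upgrades the Lipschitz graph to a $C^{1}$ graph, and the arc-length reparametrization performs the chart-independent gluing. All the individual steps check out: the sign analysis giving local strict increase of $\gamma_{1}$, the cancellation of signs in the two one-sided quotients yielding two-sided differentiability of $\phi$ with $\phi'=v_{\geq 2}\circ\tau/(v_{1}\circ\tau)$ continuous, the strict monotonicity of $s$ via $s(t')-s(t)\geq|\gamma_{1}(t')-\gamma_{1}(t)|$ on a chart, and the identification of $\widetilde\gamma$ with $\psi\circ\ell^{-1}$ giving $\widetilde\gamma'=v\circ s^{-1}$ of unit norm. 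The only points worth a sentence more in a final write-up are the standard facts that ``locally strictly increasing at every point of an interval'' implies ``strictly increasing on the interval,'' and that arc length is invariant under monotone continuous reparametrization (which is what lets you transfer rectifiability and the formula $s\circ\tau=c+\ell$ from $\psi$ to $\gamma$); neither is a gap.
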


\textsc{Valiron} \cite[(1926)]{valiron_superficie} provides an analogous proposition for surfaces of ordinary $3$-dimensional space. To attain this aim, he introduces the concept of \emph{oriented tangent plane} to a surface $F$ \footnote{Using terminology of Section \ref{sec-tan-paratan-diff},  a $2$-dimensional vector space $H$ is an \emph{oriented tangent plane} to a set $F$ at a point $x$, if $H$ is equal to  the upper tangent cone to $F$ at $x$. In other words, $F$ admits an oriented tangent plane at a point $x$ if and only if the upper tangent cone at $x$ is a $2$-dimensional vector space.}, takes into account continuously turning oriented tangent plane and, in addition,  adopts the following condition at every point $x\in F$:

\begin{enumerate}
\item\labelpag{condval1}({\textsc{Valiron} \cite[(1926), p.\,190]{valiron_superficie}}) \emph{The orthogonal projection on the oriented tangent plane to $F$ at $x$ is injective on an open neighborhood of $x$ in $F$.}\end{enumerate}

\begin{theorem}[\textsc{Valiron} {\cite[(1926)]{valiron_superficie}}]\label{theo-valiron2} Let $F\subset\reals^{3}$ be  homeomorphic to a $2$-dimensional open connected set. If $F$   admits a continuously variable  oriented tangent plane  and  the  condition \eqref{condval1} holds, then $F$  locally coincides with the graph of  a continuously differentiable function.
\end{theorem}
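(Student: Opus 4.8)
The plan is to show that $F$ is a $C^{1}$-manifold at each of its points; the asserted local coincidence with the graph of a $C^{1}$ function then follows from Definition \ref{defsot}. First observe that the hypotheses say precisely the following: $F$ is a topological $2$-manifold — in particular locally compact and with $\der(F)=F$ — and, by Theorem \ref{theoconv}, $\Tan^{+}(F,x)$ is a $2$-dimensional vector space for every $x\in F$ and $x\mapsto\Tan^{+}(F,x)$ is continuous on $F$.

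Fix $\hat x\in F$; after a rigid motion assume $\hat x=0$ and $\Tan^{+}(F,0)=\reals^{2}\times\{0\}$, and set $p(x_{1},x_{2},x_{3}):=(x_{1},x_{2})$, $e_{3}:=(0,0,1)$. \emph{First I would produce a local graph representation of $F$ over $\reals^{2}$.} By condition \eqref{condval1} at $0$, $p$ is injective on a neighbourhood of $0$ in $F$; using local compactness choose $\rho\in\reals_{++}$ with $\overline{\B}_{\rho}(0)\cap F$ compact and contained in that neighbourhood. Then $p$ restricted to $\overline{\B}_{\rho}(0)\cap F$ is a continuous injection of a compact set, hence a homeomorphism onto its image, so $\B_{\rho}(0)\cap F=\graph(\psi)$ for a continuous $\psi\colon U\to\reals$ with $\psi(0)=0$, where $U:=p(\B_{\rho}(0)\cap F)$. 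Since $\B_{\rho}(0)\cap F$ is open in $F$ it is a topological $2$-manifold, hence so is its homeomorphic image $U$; by invariance of domain $U$ is open in $\reals^{2}$.

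\emph{Next I would upgrade $\psi$ from continuous to $C^{1}$.} For $t\in U$ let $T_{t}:=\Tan^{+}(F,(t,\psi(t)))$; this equals $\Tan^{+}(\graph(\psi),(t,\psi(t)))$ by locality, and is a $2$-dimensional vector space depending continuously on $t$ (Theorem \ref{theoconv}). As $t\to0$ one has $(t,\psi(t))\to0$ and $\ang(T_{t},\reals^{2}\times\{0\})\to0$, so, $e_{3}$ being orthogonal to $\reals^{2}\times\{0\}$, property \eqref{pre2grass} gives $e_{3}\notin T_{t}$ for all $t$ in some open neighbourhood $U'\subset U$ of $0$. For such $t$, $T_{t}$ is the graph of a unique linear map $\ell_{t}\colon\reals^{2}\to\reals$, and continuity of $t\mapsto T_{t}$ forces continuity of $t\mapsto\ell_{t}$ (Proposition \ref{propconv}). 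Since $\psi$ is continuous and $\Tan^{+}(\graph(\psi|_{U'}),(t,\psi(t)))=\graph(\ell_{t})$, the implication $\eqref{diff3}\Rightarrow\eqref{diff1}$ of Proposition \ref{proptangtrad-diff} shows that $\ell_{t}$ is a differential of $\psi$ at $t$; as $U'$ is open this differential is unique, so $\psi|_{U'}$ is differentiable with continuous derivative $t\mapsto\ell_{t}$, i.e. of class $C^{1}$.

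\emph{Finally,} the $C^{1}$-diffeomorphism $(t,z)\mapsto(t,z-\psi(t))$ of $U'\times\reals$ onto itself carries $\graph(\psi|_{U'})$ onto $(U'\times\reals)\cap(\reals^{2}\times\{0\})$, so $\graph(\psi|_{U'})$ is a $C^{1}$-manifold of $\reals^{3}$ (this is the first case of the proof of Lemma \ref{lemDue-strict}); as $F$ coincides with it on a ball about $\hat x$, $F$ is a $C^{1}$-manifold at $\hat x$, and $\hat x$ being arbitrary, $F$ is a $C^{1}$-manifold. The step I expect to be delicate is the first one: mere continuity of the tangent field does not prevent $p$ from being non-injective near $0$ — this is the gap behind the failure of \textsc{Fr\'echet}'s conjecture — so condition \eqref{condval1} is genuinely needed there, and one must also invoke invariance of domain to know that the base $U$ is open, which is what makes the classical equivalence ``$C^{1}\Leftrightarrow$ differentiable with continuous derivative'' applicable. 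An alternative is to use the local graph representation to verify $\pTan^{+}(F,x)=\Tan^{+}(F,x)$ for every $x\in F$ and then invoke Tierno's Theorem \ref{corotierno} (or Corollary \ref{coro3A}).
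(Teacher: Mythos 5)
Your proof is correct, but there is nothing in the paper to compare it with line by line: Theorem \ref{theo-valiron2} is stated there as a historical result of \textsc{Valiron}, without proof, and even its $n$-dimensional extension (Theorem \ref{theomain2}) is left unproved, being implicitly a consequence of the four-cones machinery of Section \ref{sec-main}. Your argument is the direct, classical one, and it is sound at every step: condition \eqref{condval1} plus local compactness gives injectivity of the orthogonal projection $p$ on a compact neighbourhood, hence a continuous graph representation $\B_{\rho}(0)\cap F=\graph(\psi)$; the topological-manifold hypothesis plus invariance of domain makes the base $U$ open; the relation $\ang(V,W)=\pi/2\iff V\cap W^{\perp}\neq\{0\}$ from \eqref{pre2grass} together with continuity of $x\mapsto\Tan^{+}(F,x)$ excludes $e_{3}$ from the nearby tangent planes, so each $T_{t}$ is $\graph(\ell_{t})$ with $\ell_{t}$ continuous in $t$ by Proposition \ref{propconv}; and the implication $\eqref{diff3}\Rightarrow\eqref{diff1}$ of Proposition \ref{proptangtrad-diff} turns the inclusion $\Tan^{+}(\graph(\psi|_{U'}),(t,\psi(t)))\subset\graph(\ell_{t})$ into differentiability of $\psi$ at $t$ with differential $\ell_{t}$, whence $\psi|_{U'}$ is $C^{1}$. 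The comparison worth recording is this: your route uses only the upper tangent cone, the Grassmann angle, and the tangency--differentiability dictionary of Section \ref{sec-tan-paratan-diff}, and never mentions paratangent cones; the paper's intended route would instead pass through Theorem \ref{theomain2} or Theorem \ref{theomain}, i.e.\ through the lower paratangent cone and properties \eqref{coll2}, \eqref{coll4}, which is heavier but uniform in $n$ and $d$ and dispenses with the a priori topological-manifold hypothesis in favour of local compactness. Your closing "alternative" (verify $\pTan^{+}=\Tan^{+}$ and invoke Theorem \ref{corotierno}) is not actually shorter: for a merely continuous graph the upper paratangent cone can strictly exceed the upper tangent cone (the set of fig.\,2 is the standard counterexample), so that verification would require essentially the $C^{1}$ regularity you are trying to prove; it is cleaner to regard the paratangent identities as a corollary of your conclusion via Proposition \ref{proppre} rather than as an intermediate step.
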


Following \textsc{Pauc}'s counterexample \cite[(1940), p.\,96]{pauc} to Fréchet problem,  \textsc{Choquet}, in his thesis \cite[(1948), p.\,170]{choquet}, provides  necessary and sufficient conditions for the \textsc{Fréchet} supposition to hold. As \textsc{Valiron}, \textsc{Pauc} and \textsc{Choquet} make precise and explicit the meaning of tangent straight-line.  Besides, \textsc{Choquet} considers  a more general problem: \emph{If a variety admits a linear tangent variety at every point (or has  certain regularity), is there a regular parametrization (or a parametrization  having an analogous degree of regularity)?} In this spirit, \textsc{Zahorski} and \textsc{Choquet} proves the following two propositions.

\begin{proposition}[\textsc{Zahorski}, {see \textsc{Choquet} \cite[(1947), p.\,173-174]{choquet}}] If   a continuous arc  admits a tangent straight-line at  all but (possibly) countably many  points, then it has 
a   differentiable parametric representation.
\end{proposition}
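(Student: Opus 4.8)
The plan is to reduce the statement to the construction of a single increasing self‑homeomorphism of the parameter interval and then build that homeomorphism component by component. Throughout I read ``tangent straight‑line at a point $p$'' in the sense of the definitions quoted above (the chord $M_1M_2$ tends to a fixed line as $M_1,M_2\to p$). First I would fix a homeomorphism $\gamma_0:[0,1]\to C$ onto the arc and let $E\subset C$ be the set of points at which $C$ has no tangent straight‑line. Two observations: $E$ is \emph{countable} by hypothesis, and $E$ is \emph{closed}, because existence of a tangent straight‑line at $p$ is an open condition (if all chords joining pairs of points of $C$ within a small neighbourhood of $p$ make an angle $<\varepsilon$ with $\ell_p$, the same neighbourhood works, with a nearby line, for every point close to $p$); the same estimate shows that on $G:=C\setminus E$ the tangent line $\ell_q$ exists and $q\mapsto\ell_q$ is continuous. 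Hence $\gamma_0(G)$ is locally compact and, by continuity of $q\mapsto\ell_q$ together with \eqref{4cones} and Proposition \ref{proptangtrad-anal}, satisfies $\pTan^-(\gamma_0(G),q)=\pTan^+(\gamma_0(G),q)=\ell_q$ for every $q$, so by Theorem \ref{theomain} it is a $1$‑dimensional $C^1$‑manifold. Writing $K:=\gamma_0^{-1}(E)$ (a closed countable subset of $[0,1]$, which after adjoining the two endpoints we may assume contains $0$ and $1$), each connected component $(a_i,b_i)$ of $[0,1]\setminus K$ is mapped by $\gamma_0$ homeomorphically onto a connected $C^1$‑submanifold, hence $\gamma_0|_{(a_i,b_i)}=\eta_i\circ\phi_i$ where $\eta_i:I_i\to\gamma_0((a_i,b_i))$ is a non‑singular $C^1$‑diffeomorphism of an open interval $I_i$ and $\phi_i:(a_i,b_i)\to I_i$ is a (merely continuous) increasing homeomorphism.

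Next I would construct the reparametrizing homeomorphism $h:[0,1]\to[0,1]$ as follows. On each component $(a_i,b_i)$ choose a $C^1$‑diffeomorphism $g_i:(a_i,b_i)\to I_i$ that is \emph{extremely flat at both endpoints}: concretely, $g_i$ is calibrated, using the modulus of continuity of $\eta_i$, so that the distance from $\eta_i(g_i(s))$ to the corresponding endpoint of $\gamma_0((a_i,b_i))$ is $o$ of the distance from $s$ to that endpoint of $(a_i,b_i)$, with a prescribed rate depending on $i$; then set $h|_{(a_i,b_i)}:=\phi_i^{-1}\circ g_i$ and extend by $h|_K:=\mathrm{id}$. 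With this choice $\widetilde\gamma:=\gamma_0\circ h$ restricted to $(a_i,b_i)$ equals $\eta_i\circ g_i$, which is $C^1$ and non‑singular; in particular $\widetilde\gamma$ is differentiable on $[0,1]\setminus K$ (and non‑constant, so no degeneracy). The map $h$ is an increasing bijection of $[0,1]$, and it is continuous at each $c\in K$ because the image intervals $(a_i,b_i)$ clustering at $c$ shrink to $c$. The remaining point is differentiability of $\widetilde\gamma$ at each $c\in K$: here $\widetilde\gamma(c)=\gamma_0(c)$, for small $\eta$ the point $\widetilde\gamma(c+\eta)$ lies on $\gamma_0((a_i,b_i))$ for some component near $c$, and the flatness of the $g_i$ forces $\|\widetilde\gamma(c+\eta)-\gamma_0(c)\|=o(\eta)$, so $\widetilde\gamma$ is differentiable at $c$ with $\widetilde\gamma\,'(c)=0$. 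Then $\widetilde\gamma$ is the required differentiable parametric representation.

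The hard part will be exactly this last verification — obtaining $\|\widetilde\gamma(c+\eta)-\gamma_0(c)\|=o(\eta)$ \emph{simultaneously} for every $c\in K$, when only countably many calibration choices (one per component) are available and the components may accumulate very unevenly along $K$. I expect the clean way to organize it is an induction on the Cantor–Bendixson rank of the countable closed set $K$: one first treats the isolated points of $K$ (finitely many components abut each), then the points of the first derived set, and so on; at each stage only finitely many of the clustering components are ``long'' — since the lengths $b_i-a_i$ are summable — so their endpoint‑flatness can be fixed by hand, while the remaining short components are controlled automatically by the rate prescribed at the previous stage. A secondary, essentially routine, technical point is to check that the assembled $h=\phi_i^{-1}\circ g_i$ (extended by the identity on $K$) is genuinely a homeomorphism of $[0,1]$, which follows from monotonicity plus the shrinking of clustering image intervals. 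Once $\widetilde\gamma=\gamma_0\circ h$ is shown differentiable at every point of $[0,1]$, the theorem follows.
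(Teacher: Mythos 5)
The paper offers no proof of this proposition: it is quoted as a historical result from Choquet's thesis, so there is no argument of the authors to compare yours against, and your proposal must stand on its own. It does not, because its very first structural step fails: the exceptional set $E$ need not be closed. Existence of a tangent straight-line at $p$ --- in the traditional sense of Definition \ref{deftangtrad} (limit of the chords issued from $p$, which is the notion relevant to the Fr\'echet problem this proposition belongs to), or even in the stronger Lebesgue/paratangent sense you adopt --- is \emph{not} an open condition. Knowing that all chords near $p$ make an angle $<\varepsilon$ with $\ell_p$ only confines the paratangent cones at nearby points to a thin bilateral cone around $\ell_p$; it does not force the chords near a nearby point $q$ to \emph{converge} to a single limit line. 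Concretely, let $\phi(s):=\max(0,1-|s|)$ and let $C$ be the graph over $[0,1]$ of $f(t):=\sum_m a_m\,\phi\bigl((t-1/m)/b_m\bigr)$ with disjoint supports and $a_m/b_m\to 0$. Then $C$ is an arc; the tangent (in either sense) fails exactly at the three corner points of each tent, so $E$ is countable; yet these corners accumulate at the origin, where the paratangent cone is the $x$-axis because the Lipschitz constant of $f$ on $[0,1/m]$ tends to $0$. Hence $E$ is not closed, $K=\gamma_0^{-1}(E)$ has no decomposition of its complement into countably many open intervals on which the curve is a $C^{1}$-manifold, and $C\setminus E$ need not be locally compact, so Theorem \ref{theomain} cannot be invoked on it. The Cantor--Bendixson induction you propose for the final glueing presupposes $K$ closed and therefore inherits the same defect.

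A second, independent problem is the tangency notion itself. Under the traditional reading your reduction collapses even when $E=\emptyset$: the arc $t\mapsto(t,t^{2}\sin(1/t))$ has a traditional tangent line at every point, but $\pTan^{+}$ at the origin is a two-dimensional sector, so the arc is not a $C^{1}$-manifold there and neither Theorem \ref{theomain} nor the component-by-component $C^{1}$-diffeomorphisms $\eta_i$ exist --- although the identity parametrization is already the differentiable representation the proposition asks for. This example shows that any correct proof must produce differentiability \emph{without} passing through local $C^{1}$-manifold structure on the good set; Zahorski's construction instead reparametrizes so that the derivative of $\widetilde\gamma$ is forced to vanish on a prescribed set containing the bad points, working directly with a countable (not necessarily closed) exceptional set. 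The endpoint-flattening idea in your second half is in that spirit and could be salvaged, but as written the proof rests on the false closedness claim and on an inapplicable reduction to the four-cones theorem.
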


\begin{proposition}[\textsc{Choquet} {\cite[(1947), p.\,174]{choquet}}] A continuous image of a compact interval is a rectifiable curve if and only if it admits a Lipschitzian   differentiable parametric representation.
\end{proposition}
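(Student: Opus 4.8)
The plan is to prove the two implications separately; essentially all of the content lies in the ``only if'' direction. The implication ``Lipschitzian differentiable parametrization $\Rightarrow$ rectifiable'' is immediate: if $g\colon[0,1]\to\reals^{n}$ is $L$-Lipschitz and $g([0,1])$ is the given locus, then for every partition $0=t_{0}<\dots<t_{k}=1$ one has $\sum_{i}\|g(t_{i+1})-g(t_{i})\|\le L\sum_{i}(t_{i+1}-t_{i})=L$, so $g$ has finite length and the curve is rectifiable.

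\emph{Reduction to the arc-length parametrization.} Assume the curve $C$ is rectifiable, say of length $L$. Starting from a continuous parametrization $\gamma$ of $C$ of finite length, I would first pass to the arc-length parametrization: the length function $\ell$ of $\gamma$ is continuous, nondecreasing and maps onto $[0,L]$, and $f(\ell(t)):=\gamma(t)$ defines a map $f\colon[0,L]\to\reals^{n}$ which is well defined ($\gamma$ is constant on the intervals where $\ell$ is constant) and $1$-Lipschitz (chord $\le$ arc), with $f([0,L])=C$. Being Lipschitz, $f$ is differentiable outside a Lebesgue-null set $N\subset[0,L]$.

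\emph{Reparametrization lemma and conclusion.} The core of the proof is the following: there is a nondecreasing Lipschitz surjection $\phi\colon[0,1]\to[0,L]$ that is differentiable everywhere and satisfies $\phi'(u)=0$ whenever $\phi(u)\in N$. Granting it, put $g:=f\circ\phi$. Then $g$ is Lipschitz, $g([0,1])=f([0,L])=C$, and $g$ is differentiable at every $u_{0}\in[0,1]$: if $\phi(u_{0})\notin N$ the chain rule gives $g'(u_{0})=f'(\phi(u_{0}))\,\phi'(u_{0})$; if $\phi(u_{0})\in N$ then $\phi'(u_{0})=0$ and, by the $1$-Lipschitz bound on $f$, $\|g(u)-g(u_{0})\|\le|\phi(u)-\phi(u_{0})|=o(|u-u_{0}|)$, so $g'(u_{0})=0$. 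This gives the required Lipschitzian differentiable parametric representation.

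\emph{Proving the lemma --- the main obstacle.} The naive idea, namely to cover $N$ by a closed null set and make $\phi$ have zero derivative on its preimage, fails: a Lipschitz $f$ may be non-differentiable on a \emph{dense} set $N$, and a dense null set lies in no closed null set. The plan is instead to enlarge $N$ to a $G_{\delta}$ null set $E\supset N$; to construct, in the spirit of the Zahorski-type result quoted above, a strictly increasing continuous $\psi\colon[0,L]\to[0,\psi(L)]$ which is differentiable everywhere in the extended sense, with $\psi'\ge 1$ throughout and $\psi'(t)=+\infty$ precisely for $t\in E$; and then to let $\phi$ be the inverse of $\psi$, rescaled to have domain $[0,1]$. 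Since $\psi'\ge1$, $\phi$ is $1$-Lipschitz; since $\psi$ has an everywhere-defined positive (possibly infinite) derivative, $\phi$ is everywhere differentiable with $\phi'(u)=1/\psi'(\phi(u))$, which vanishes exactly when $\phi(u)\in E\supset N$. The delicate step is the construction of $\psi$: writing $\psi(t)=t+\int_{0}^{t}w$ with $w=\sum_{k}\mathbf 1_{U_{k}}\in L^{1}$ for a nested sequence of open sets $U_{k}\downarrow E$ with $|U_{k}|\to0$ immediately forces $\psi'=+\infty$ on $E$ and $\psi'\ge1$ everywhere, but one must choose the $U_{k}$ carefully enough that the integral is also \emph{genuinely} differentiable at every point of $[0,L]\setminus E$; this refinement, exactly the kind of statement Zahorski proved, is the technical heart of the argument.
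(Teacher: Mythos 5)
The paper offers no proof of this proposition: it is quoted in Appendix \ref{app-history} purely as a historical statement from Choquet's thesis, so there is no in-paper argument to compare yours against. On its own terms, your proposal follows the standard route (arc-length parametrization, then a monotone everywhere-differentiable reparametrization whose derivative vanishes on the preimage of the bad set), and everything surrounding the key lemma is correct: the ``if'' direction, the passage to the $1$-Lipschitz $f\colon[0,L]\to\reals^{n}$, the two-case verification that $g=f\circ\phi$ is differentiable everywhere, and the reduction of the lemma to producing a strictly increasing continuous $\psi$ with $\psi'=+\infty$ exactly on a $G_{\delta}$ null set $E\supset N$ and a finite derivative $\ge 1$ elsewhere, followed by inversion.

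The genuine gap is the one you flag yourself: the existence of $\psi$. As written, $\psi(t)=t+\int_{0}^{t}\sum_{k}\mathbf{1}_{U_{k}}$ does force $\psi'=+\infty$ on $E$ and lower derivates $\ge1$ everywhere, but for an arbitrary nested choice of open $U_{k}\supset E$ with $|U_{k}|\to0$ it does \emph{not} give differentiability off $E$: already for a single open set $U$ the function $t\mapsto|U\cap[0,t]|$ fails to be differentiable at a boundary point of $U$ at which $U$ has no one-sided density, and since $N$ (hence $E$) may be dense, a point $t_{0}\notin E$ can lie on the boundary of every $U_{k}$. What you need is Zahorski's theorem from his 1946 work on the first derivative --- every $G_{\delta}$ null set is exactly the set of infinite derivative of some continuous strictly increasing function that has a finite derivative everywhere else --- which is \emph{not} the Zahorski proposition on countably many exceptional points quoted in the appendix, and whose proof requires a careful quantitative control of how densely $U_{k+1}$ sits inside the components of $U_{k}$ so that the averages $\frac{1}{h}\int_{t_{0}}^{t_{0}+h}\sum_{k\ge k_{0}}\mathbf{1}_{U_{k}}$ tend to $0$ at each $t_{0}\notin E$. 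Until that lemma is established or properly cited, the ``only if'' direction is incomplete. A smaller point: you tacitly read ``rectifiable curve'' as ``some continuous parametrization has finite length''; if it is instead taken to mean that the locus has finite one-dimensional measure, an additional (classical but nontrivial) step is needed to produce the finite-length parametrization you start from.
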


Invoking seminal papers of \textsc{Fréchet} \cite[(1925)]{frechet1925},
and \textsc{Valiron} \cite[(1926, 1927)]{valiron_superficie,valiron_curva},  \textsc{Severi}  looks for non-singular continuously differentiable parametric representations
of a curve (resp.\,\,surface).  Main ingredients of the solutions of \textsc{Severi}   are \emph{strict differentiability} and \emph{paratangency}. 
 Strict differentiability ensures that  curves (resp.\,\,surfaces) have a continuously turning tangent straight line (resp.\,\,plane); it is geometrically characterized in terms  of paratangency (see Proposition \ref{propparatangtrad-diff}).  On the other hand, aware of the need of  \textsc{Valiron}'s condition \eqref{condval1}, \textsc{Severi}  assumes the following \emph{simplicity condition} and, consequently, ensures \textsc{Valiron}'s condition by  replacing  \textsc{Valiron}'s oriented tangent plane   by a paratangent plane.   
\begin{definition}[\textsc{Severi} {\cite[(1929), p.\,194]{Severi-para}, \cite[(1930), p.\,216]{Severi-curve-intuitive}, \cite[(1931), p.\,341]{Severi-Krak}, \cite[(1934), p.\,194]{Severi-diff}}]\label{condsevsim}  A $d$-dimensional topological manifold $F$ of $\reals^{n}$ satisfies the \emph{Severi simplicity 
 condition}, if the dimension of the linear hull of the  upper paratangent cone to $F$ at  every point is    
 at most $d$. 

\end{definition}

\begin{theorem}[{\textsc{Severi}  \cite[(1934), p.\,194, 196]{Severi-diff}}]\label{theo-severi}
 If $F$ is   a topological manifold of dimension one (resp. two) satisfying Severi  simplicity condition, then  the upper paratangent cone at every point is a one (resp. two) dimensional vector space 
which varies continuously.
\end{theorem}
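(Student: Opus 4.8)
The plan is to prove first that, under these hypotheses, $\pTan^{+}(F,x)$ is a $d$-dimensional vector space at every $x\in F$ (where $d=1$ or $d=2$), and then to deduce the continuous variability directly from Bouligand's upper semicontinuity \eqref{coll2} and Theorem \ref{theoconv}. As a preliminary: since $F$ is a $d$-dimensional topological manifold with $d\ge1$, every point of $F$ is an accumulation point, so by Cassina's characterization \eqref{coll6} the cone $\Tan^{+}(F,x)$ — hence its linear hull $\pLTan^{+}(F,x)$ — contains non-null vectors; thus $1\le\dim\pLTan^{+}(F,x)$, while the Severi simplicity condition (Definition \ref{condsevsim}) gives $\dim\pLTan^{+}(F,x)\le d$. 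For $d=1$ this already forces $\dim\pLTan^{+}(F,x)=1$, and a bilateral cone (property \eqref{coll1}) whose linear hull is a line coincides with that line; hence $\pTan^{+}(F,x)=\pLTan^{+}(F,x)$ is a line, and only the case $d=2$ requires further work.

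To handle $d=2$, fix $\hat x\in F$. First I would rule out $\dim\pLTan^{+}(F,\hat x)=1$: by \eqref{coll7ante} the orthogonal projection onto the line $\pLTan^{+}(F,\hat x)$ would be injective on some $F\cap\B_{\varepsilon}(\hat x)$, which, being homeomorphic to an open subset of $\reals^{2}$, contains a homeomorphic copy of $[0,1]^{2}$ on which the projection restricts to a homeomorphism onto a subset of $\reals$, contradicting invariance of dimension. Hence $\dim\pLTan^{+}(F,\hat x)=2$, and we may choose coordinates with $\pLTan^{+}(F,\hat x)=\reals^{2}\times\{0\}\subset\reals^{2}\times\reals^{n-2}$, so that $\pTan^{+}(F,\hat x)\subset\reals^{2}\times\{0\}$ contains no vertical line. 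Applying \eqref{coll7ante} with $V=\{0\}\times\reals^{n-2}$ and $W=\reals^{2}\times\{0\}$, the orthogonal projection $\pi$ onto $\reals^{2}\times\{0\}$ is injective on $F\cap\B_{\varepsilon}(\hat x)$; invariance of domain applied inside a chart then shows that $A:=\pi(F\cap\B_{\varepsilon}(\hat x))$ is open, that $\pi$ restricts to a homeomorphism onto $A$, and hence that $F\cap\B_{\varepsilon}(\hat x)=\graph(\varphi)$ for a continuous $\varphi:A\to\reals^{n-2}$ with $A\subset\der(A)$. Writing $\hat t:=\pi(\hat x)$, so that $\hat x=(\hat t,\varphi(\hat t))$, locality of the paratangent cone gives $\pTan^{+}(\graph(\varphi),\hat x)=\pTan^{+}(F,\hat x)$, which contains no vertical line; together with continuity of $\varphi$ at $\hat t$, Lemma \ref{lipsch} makes $\varphi$ locally Lipschitz at $\hat t$.

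Now I would show $\pTan^{+}(F,\hat x)=\reals^{2}\times\{0\}$. The inclusion $\subset$ is the coordinate choice. Conversely, for $v\in\reals^{2}$ take $\lambda_{m}\downarrow0$, $y_{m}:=\hat x$ and $x_{m}:=(\hat t+\lambda_{m}v,\varphi(\hat t+\lambda_{m}v))\in F$; the Lipschitz estimate keeps $\frac{x_{m}-y_{m}}{\lambda_{m}}=(v,\lambda_{m}^{-1}(\varphi(\hat t+\lambda_{m}v)-\varphi(\hat t)))$ bounded, so a subsequence converges to some $(v,w)$, which by \eqref{paraseq} lies in $\pTan^{+}(F,\hat x)\subset\reals^{2}\times\{0\}$; hence $w=0$ and $(v,0)\in\pTan^{+}(F,\hat x)$. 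Therefore $\pTan^{+}(F,\hat x)=\pLTan^{+}(F,\hat x)$ is a $2$-dimensional vector space. Having established in both cases that $x\mapsto\pTan^{+}(F,x)$ takes values in $\GG(\reals^{n},d)$, it remains only to note that this map is upper semicontinuous — which is exactly Bouligand's property \eqref{coll2} — so that Theorem \ref{theoconv} yields its continuity, which is the last assertion of the statement.

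The main obstacle is the step determining the exact dimension of $\pLTan^{+}(F,x)$ (for $d=2$, that it cannot be $1$) together with the verification that the upper paratangent cone actually fills out its whole linear hull: neither is a consequence of a cone computation, and both rest on feeding the injectivity of projections supplied by \eqref{coll7ante} into the topological manifold structure via invariance of domain, and, for the second, on the local Lipschitz graph representation coming from Lemma \ref{lipsch}.
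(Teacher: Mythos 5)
The paper never actually writes out a proof of Theorem \ref{theo-severi}: it is restated as a historical result of \textsc{Severi} (1934), and its modern extension, Theorem \ref{theomain3}, is likewise stated without proof, the implicit modern route being to pass through the four-cones coincidence theorem \ref{theomain} and then read off the conclusion from Proposition \ref{proppre}\,\eqref{pre3}--\eqref{pre4}. Your argument is therefore not matched against a written proof, but it is correct, self-contained, and goes in the logically natural direction: instead of first upgrading $F$ to a $C^{1}$-manifold, you show directly that $\pTan^{+}(F,x)$ fills out its linear hull with $\dim=d$ --- for $d=1$ by bilaterality \eqref{coll1} of a cone spanning a line, and for $d=2$ by first excluding $\dim\pLTan^{+}(F,\hat x)=1$ (injectivity of the projection from \eqref{coll7ante} would embed a $2$-disk in a line, against invariance of dimension), then using \eqref{coll7ante} plus invariance of domain to write $F$ locally as $\graph(\varphi)$ over an open $A\subset\reals^{2}$, Lemma \ref{lipsch} to make $\varphi$ Lipschitz near $\hat t$, and the resulting boundedness of difference quotients to force $(v,0)\in\Tan^{+}(F,\hat x)$ for every $v\in\reals^{2}$ --- after which continuity is immediate from Bouligand's upper semicontinuity \eqref{coll2} and the equivalence of semicontinuities for $\GG(\reals^{n},d)$-valued maps in Theorem \ref{theoconv}. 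This uses exactly the toolkit the paper assembles (and mirrors the structure of the paper's own proof of the four-cones theorem, which also reduces to a Lipschitz graph via \eqref{coll7}/\eqref{coll7ante}), while avoiding the slight circularity of deriving Severi's 1934 lemma from the manifold characterization it was historically a step toward; the price is that the disk/invariance-of-dimension arguments are what confine the statement to $d\le 2$ here, whereas the four-cones route works in any dimension. The only cosmetic slip is the phrase ``hence its linear hull $\pLTan^{+}(F,x)$'' applied to $\Tan^{+}(F,x)$ (whose linear hull is $\LTan^{+}$, not $\pLTan^{+}$); since $\Tan^{+}\subset\pTan^{+}$ the inequality $1\le\dim\pLTan^{+}(F,x)$ you draw from \eqref{coll6} is nonetheless valid.
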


According to \eqref{condval1} we consider the 
\begin{enumerate}
\item\labelpag{condval} {\emph{Valiron condition} for a set $F\subset\reals^{n}$}: 
For every point $x\in F$, the orthogonal projection  on the linear upper tangent space $\LTan^{+}(F,x)$  is  injective on an open neighborhood of $x$ in $F$.
\end{enumerate}

 The following theorem extends \textsc{Valiron}'s Theorem \ref{theo-valiron2}.
 
\begin{theorem}[\textbf{Valiron theorem}]\label{theomain2}
A  non-empty subset  $F$ of $\reals^{n}$ is  a $d$-dimensional $C^{1}$-manifold  if and only if $F$ is a $d$-dimensional topological manifold,   Valiron condition $\eqref{condval}$   
is satisfied and

\begin{enumerate}
\item\labelpag{main2}   $x\mapsto\Tan^{+}(F,x)$ is a continuous map from $F$ to $\GG(\reals^{n},d)$, i.e., 
\[\pTan^{-}(F,x)=\LTan^{+}(F,x)\text{ and } \dim(\LTan^{+}(F,x))=d\text{ for every }x\in F.\, \footnote{The $d$-dimensionality condition cannot be dropped in \eqref{main2}, as it is shown by the following example. Consider the set $F:=\{(x,y,z)\in\reals^{3}:\,(x^{2} + y^{2} + z^{2})^{2} = 4(x^{2} + y^{2})\}$. $F$ is a torus generated by turning the circle $S:=\{(0,y,z): (y-1)^{2}+z^{2}=1\}$   about the $z$-axis. Since the circle $S$ is tangent to $z$-axis at $(0,0,0)$, the set $F$ is a $2$-dimensional $C^{1}$-manifold  at every point different from $(0,0,0)$; while $\pTan^{-}(F,(0,0,0))=\Tan^{+}(F,(0,0,0))=\LTan^{+}(F,(0,0,0))=\reals e_{3}$ and, consequently, $\dim(\LTan^{+}(F,(0,0,0)))=1$. }\quad\Box
\]
\end{enumerate} 

\end{theorem}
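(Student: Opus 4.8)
The plan is to derive Valiron's theorem from the four-cones coincidence theorem~\ref{theomain} by showing that, near each of its points, $F$ is the graph of a $C^{1}$ function over its tangent space. \emph{Necessity} is essentially standard: if $F$ is a $d$-dimensional $C^{1}$-manifold, then by property~\eqref{pre0} of Proposition~\ref{proppre} it is a ($d$-dimensional) topological manifold, and by properties~\eqref{pre2}, \eqref{pre3}, \eqref{pre4} of the same proposition each $\Tan^{+}(F,x)$ is a $d$-dimensional vector space equal to $\pTan^{-}(F,x)$ and to $\LTan^{+}(F,x)$, varying continuously; this is exactly condition~\eqref{main2} in the restated form of its footnote. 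For the Valiron condition~\eqref{condval} I would invoke the standard fact that near each of its points a $C^{1}$-manifold is the graph of a $C^{1}$ function over its tangent space (inverse function theorem), so the orthogonal projection onto $\LTan^{+}(F,x)=\Tan^{+}(F,x)$ is injective on a neighbourhood of $x$ in $F$.

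\emph{Sufficiency}. Fix $\hat x\in F$; the goal is to prove $F$ is a $C^{1}$-manifold at $\hat x$. Put $d:=\dim\LTan^{+}(F,\hat x)$. If $d=0$, then $\Tan^{+}(F,\hat x)\subset\LTan^{+}(F,\hat x)=\{0\}$, hence $\hat x$ is isolated by~\eqref{coll6} and there is nothing to prove. Assume $d\ge1$ and set $W:=\LTan^{+}(F,\hat x)$; by~\eqref{main2} (equivalently, $\pTan^{-}(F,x)=\Tan^{+}(F,x)=\LTan^{+}(F,x)$ is a $d$-plane for every $x\in F$), $W$ is a $d$-dimensional vector space. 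Write $V:=W^{\perp}$ and $\reals^{n}=W\oplus V$, identifying points with pairs. By the Valiron condition the orthogonal projection $p\colon\reals^{n}\to W$ is injective on $G:=F\cap\B_{\varepsilon}(\hat x)$ for some $\varepsilon>0$. Since $G$ is a $d$-dimensional topological manifold and $p|_{G}$ is a continuous injection into $W\cong\reals^{d}$, Brouwer's invariance of domain makes $p|_{G}$ an open map, hence a homeomorphism of $G$ onto an open set $U\subset W$; thus $G=\graph(\psi)$ for a continuous function $\psi\colon U\to V$.

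It remains to upgrade $\psi$ to a $C^{1}$ function. For every $x\in G$, locality of the tangent cone together with~\eqref{main2} gives $\Tan^{+}(\graph\psi,x)=\Tan^{+}(F,x)=\LTan^{+}(F,x)=:W_{x}\in\GG(\reals^{n},d)$. As $x\mapsto W_{x}$ is continuous (Theorem~\ref{theoconv}), $\ang(W_{x},W)\to0$ when $G\ni x\to\hat x$; since $\ang(\cdot,W)=\pi/2$ holds exactly for the $d$-planes meeting $W^{\perp}=V$ nontrivially, there is $\varepsilon'\le\varepsilon$ with $W_{x}\cap V=\{0\}$ for all $x\in F\cap\B_{\varepsilon'}(\hat x)$. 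For such $x=(t,\psi(t))$ the plane $W_{x}$ is the graph of a unique linear map $L_{t}\colon W\to V$, and Proposition~\ref{proptangtrad-diff} — applied to $\psi$, continuous at the accumulation point $t$ of the open set $U$, with $\Tan^{+}(\graph\psi,(t,\psi(t)))=\graph(L_{t})$ — shows $\psi$ is differentiable at $t$ with $\dd\psi(t)=L_{t}$. Moreover $t\mapsto L_{t}$ is continuous on the open set $U':=p(F\cap\B_{\varepsilon'}(\hat x))$: indeed $t\mapsto(t,\psi(t))$ is a homeomorphism of $U'$ onto $F\cap\B_{\varepsilon'}(\hat x)$, $x\mapsto W_{x}$ is continuous, and recovering a linear map from the $d$-plane it is the graph of is a continuous operation on the $d$-planes transverse to $V$. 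Hence $\psi|_{U'}\in C^{1}(U')$, so $F\cap\B_{\varepsilon'}(\hat x)=\graph(\psi|_{U'})$ is a $d$-dimensional $C^{1}$-manifold of $\reals^{n}$ — for instance the $C^{1}$-diffeomorphism $(t,v)\mapsto(t,\,v+\psi(t))$ of $U'\times V$ carries $U'\times\{0\}$ onto it, as in Lemma~\ref{lemDue-strict}. As $\hat x$ was arbitrary, $F$ is a $d$-dimensional $C^{1}$-manifold. (This also re-establishes $\pTan^{-}(F,x)=\pTan^{+}(F,x)$ at every point via property~\eqref{pre3} of Proposition~\ref{proppre}, so the conclusion is formally a consequence of Theorem~\ref{theomain} as well.)

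The step I expect to be the main obstacle is precisely this upgrade from bare injectivity to a $C^{1}$ graph: the Valiron condition only delivers injectivity of the projection, so invariance of domain is needed merely to obtain a \emph{continuous} graph function, and then the continuity of $x\mapsto\Tan^{+}(F,x)$ must be used twice — once to keep the tangent $d$-planes transverse to $V$ throughout a neighbourhood of $\hat x$ (so that Proposition~\ref{proptangtrad-diff} renders $\psi$ differentiable at every nearby parameter) and once more to make $\dd\psi$ depend continuously on the parameter, which is what turns "differentiable everywhere" into "$C^{1}$".
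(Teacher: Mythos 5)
Your argument is correct and complete in all essentials. Note that the paper itself offers no proof of Theorem \ref{theomain2} (the statement is closed with a $\Box$), the authors' declared intention being that such characterizations ``can be deduced from the four-cones coincidence theorem''; so there is no paper proof to match, and what you give is a genuine, self-contained argument. Its skeleton is the same as the paper's proof of the local Theorem \ref{theomainlem} --- represent $F$ near $\hat x$ as a graph over a complement of the tangent space, upgrade the graph function to continuously differentiable, and invoke the chart construction of Lemma \ref{lemDue-strict} --- but the two key steps are replaced by tools adapted to Valiron's weaker hypotheses. Where the paper obtains a Lipschitz graph from $e_{n}\notin\pTan^{+}(F,x)$ via \eqref{coll7}, you cannot (no control on $\pTan^{+}$ is assumed), and your substitute --- injectivity of the orthogonal projection (Valiron's condition) combined with Brouwer invariance of domain on the $d$-dimensional topological manifold $F$ --- is exactly right, and is the same mechanism the paper uses to prove Gluck's corollary. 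Where the paper works with strict differentiability (Proposition \ref{propparatangtrad-diff}), you get only Fr\'echet differentiability of $\psi$ at each $t$ from Proposition \ref{proptangtrad-diff}, and you recover $C^{1}$ from the continuity of $x\mapsto\Tan^{+}(F,x)$; this is legitimate because $U'$ is open, so differentiability together with continuity of the differential is precisely $C^{1}$ there. Two assertions deserve a line of justification each, though neither is a gap: (i) the continuity of the map assigning to a $d$-plane transverse to $V$ the linear map $W\to V$ whose graph it is (a short compactness argument using Corollary \ref{coroconv2} settles it); (ii) for necessity, the Valiron condition, which you derive from the inverse function theorem but which also follows at once from \eqref{coll7ante} together with the equality $\pTan^{+}(F,x)=\LTan^{+}(F,x)$ valid on a $C^{1}$-manifold by Proposition \ref{proppre}.
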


The following theorem extends \textsc{Severi}'s Theorem \ref{theo-severi}, involving a simplicity condition. Severi simplicity condition $\ref{condsevsim}$ can be restated as
\begin{equation}
\label{main3}   \dim(\pLTan^{+}(F,x))\le d\text{ for every }x\in F, 
\end{equation} 
or, equivalently, 
\begin{enumerate}
\item\labelpag{Sev2} at every $x\in F$ there exists a $d$-dimensional vector space which is paratangent  in traditional sense to $F$.
\end{enumerate}

\begin{theorem}[\textbf{Severi theorem}]\label{theomain3}
A  non-empty subset  $F$ of $\reals^{n}$ is  a $d$-dimensional $C^{1}$-manifold  if and only if $F$ is a $d$-dimensional topological manifold and  Severi simplicity condition holds. 
\hfill$\Box$
\end{theorem}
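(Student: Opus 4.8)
The plan is to derive the theorem from Tierno's Theorem~\ref{corotierno}. For the necessity there is nothing new: if $F$ is a $d$-dimensional $C^{1}$-manifold then by Proposition~\ref{proppre} it is a $d$-dimensional topological manifold and, by \eqref{pre3} and \eqref{pre2}, $\pTan^{+}(F,x)=\Tan^{+}(F,x)$ is a $d$-dimensional vector space at every $x\in F$, so $\dim\pLTan^{+}(F,x)=\dim\pTan^{+}(F,x)=d$ and the Severi simplicity condition holds. Conversely, suppose $F$ is a $d$-dimensional topological manifold with $\dim\pLTan^{+}(F,x)\le d$ for all $x\in F$; the case $d=0$ is trivial ($F$ is then discrete), so assume $d\ge 1$. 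As a topological manifold $F$ is locally compact, so by Theorem~\ref{corotierno} it is enough to prove that $\Tan^{+}(F,x)=\pLTan^{+}(F,x)$ and $\dim\LTan^{+}(F,x)=d$ for every $x\in F$.

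First I would produce a local graph representation of $F$. Fixing $\hat x\in F$, put $V:=\pLTan^{+}(F,\hat x)$ and write $\reals^{n}=V\oplus V^{\perp}$ with $\pi$ the orthogonal projection onto $V$. By \eqref{coll7ante} there is $\varepsilon>0$ with $\pi$ injective on $F\cap\B_{\varepsilon}(\hat x)$. A chart neighbourhood of $\hat x$ in $F$ lying inside $\B_{\varepsilon}(\hat x)$ is homeomorphic to an open subset of $\reals^{d}$ and is mapped continuously and injectively by $\pi$ into $V$; invariance of domain then forces $\dim V=d$, shows that $\pi$ restricts to a homeomorphism of $F\cap\B_{\varepsilon}(\hat x)$ onto an open set $U\subset V$, and hence that $F\cap\B_{\varepsilon}(\hat x)=\graph(g)$ for a continuous $g\colon U\to V^{\perp}$.

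Next I would upgrade $g$ to a locally Lipschitz function. Combining the upper semicontinuity \eqref{coll2} of the upper paratangent cone with compactness of the unit sphere, one shrinks $\varepsilon$ so that $\pTan^{+}(F,p)$ meets $V^{\perp}$ only in $\{0\}$ for every $p\in F\cap\B_{\varepsilon}(\hat x)$: a sequence of counterexamples $p_{m}\to\hat x$ with unit vectors $w_{m}\in\pTan^{+}(F,p_{m})\cap V^{\perp}$ would have an accumulation point $w$, a unit vector in $\Ls_{F\ni x\to\hat x}\pTan^{+}(F,x)\subset\pTan^{+}(F,\hat x)\subset V$, contradicting $V\cap V^{\perp}=\{0\}$. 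Since tangent cones are local, $\pTan^{+}(\graph(g),p)=\pTan^{+}(F,p)$ then contains no vertical line, so Lemma~\ref{lipsch} makes $g$ locally Lipschitz at every point of $U$.

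Finally, local Lipschitzness of $g$ at $t\in U$ keeps the difference quotients $(e,\tfrac{1}{\lambda}(g(t+\lambda e)-g(t)))$ bounded as $\lambda\to 0^{+}$ for each unit $e\in V$, so a subsequential limit lies in $\Tan^{+}(\graph(g),p)=\Tan^{+}(F,p)$ with $p=(t,g(t))$, whence $\pi(\Tan^{+}(F,p))=V$. As $\Tan^{+}(F,p)\subset\pLTan^{+}(F,p)$ and $\dim\pLTan^{+}(F,p)\le d=\dim V$, the restriction of $\pi$ to $\pLTan^{+}(F,p)$ is a linear isomorphism onto $V$; hence $\dim\pLTan^{+}(F,p)=d$ and, $\Tan^{+}(F,p)$ being a subcone of $\pLTan^{+}(F,p)$ whose image under this isomorphism is all of $V$, $\Tan^{+}(F,p)=\pLTan^{+}(F,p)=\LTan^{+}(F,p)$. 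Letting $\hat x$ range over $F$ (so that $p=\hat x$ is covered) gives exactly the hypotheses of Theorem~\ref{corotierno}. I expect the delicate points to be the two appeals to invariance of domain — to force $\dim V=d$ and to realize $F$ locally as a graph over an open subset of $\reals^{d}$ — together with the semicontinuity argument spreading the ``no vertical vectors in $\pTan^{+}$'' condition from $\hat x$ to a whole neighbourhood; the remaining steps are bookkeeping with the definitions of the four cones. (As an alternative endgame, once $g$ is known locally Lipschitz one can apply Corollary~\ref{corosdiff}, using that $\pLTan^{+}(\graph(g),p)$ has just been shown to contain no vertical line, to conclude that $g$ is strictly differentiable on $U$, and then invoke Lemma~\ref{lemDue-strict} to see directly that $\graph(g)=F\cap\B_{\varepsilon}(\hat x)$ is a $C^{1}$-manifold.)
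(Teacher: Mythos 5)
The paper states this theorem with a terminal $\Box$ and supplies no proof at all (it only announces that the old characterizations ``can be deduced from the four-cones coincidence theorem''), so there is no argument of the authors to compare yours against line by line; your proposal has to stand on its own, and it does. Your route --- reduce to Tierno's Theorem~\ref{corotierno}, use \eqref{coll7ante} and invariance of domain to force $\dim\pLTan^{+}(F,\hat x)=d$ and to realize $F$ locally as the graph of a continuous $g\colon U\to V^{\perp}$ over an open $U\subset V:=\pLTan^{+}(F,\hat x)$, spread the ``no vertical paratangent vectors'' condition to a neighbourhood via the upper semicontinuity \eqref{coll2}, get $g$ locally Lipschitz from Lemma~\ref{lipsch}, and then use bounded difference quotients to show $\pi(\Tan^{+}(F,p))=V$, which together with $\dim\pLTan^{+}(F,p)\le d$ forces $\Tan^{+}(F,p)=\pLTan^{+}(F,p)$ of dimension $d$ --- is sound; the key dimension-counting step (a subcone of a $d$-dimensional space whose image under an injective projection is all of $V$ must be the whole space) is correct. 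It is exactly the kind of argument the authors' program calls for, and it isolates the one genuinely topological input (invariance of domain, which is where the hypothesis ``$d$-dimensional topological manifold'' is actually used, as opposed to mere local compactness in Theorem~\ref{theomain}). Two small points you should tidy up. First, to conclude that $\pi(F\cap\B_{\varepsilon}(\hat x))$ is \emph{open} in $V$ you need one further shrinking of $\varepsilon$ so that $F\cap\B_{\varepsilon}(\hat x)$ is contained in (hence open in) the chart neighbourhood; injectivity of $\pi$ on $F\cap\B_{\varepsilon}(\hat x)$ alone does not make its image open. Second, your parenthetical alternative endgame invokes Lemma~\ref{lemDue-strict}, which the paper states only for scalar-valued $\varphi\colon A\to\reals$; since here $g$ takes values in $V^{\perp}$ of dimension $n-d$, you would either need the (true, but unproved in the paper) vector-valued version or an iteration one codimension at a time as in the proof of Theorem~\ref{theomainlem}. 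Neither point affects your main route through Tierno's theorem.
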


 The following  corollary of Theorem \ref{theomain2} (Valiron theorem) is due to \textsc{Gluck}. 

\begin{corollary} [\textsc{Gluck} {\cite[(1966), p.\,199, 202]{gluck1} and \cite[(1968), p.\,45]{gluck2}}] A non-empty set $F\subset \reals^{n}$  is a $d$-dimensional $C^{1}$-manifold  if and only $F$ is a $d$-dimensional topological manifold and there exists a continuous map $\LTan:F\to\GG(\reals^{n},d)$ such that, for every $x\in F$,
\begin{enumerate}
\item\labelpag{condgluck2} $\LTan(x)$ is tangent in traditional sense to $F$ at $x$,
\item\labelpag{condgluck3} the orthogonal projection of $F$ on  $\LTan(x)$ is injective on an open neighborhood  of $x$ in $F$.

\end{enumerate}
\end{corollary}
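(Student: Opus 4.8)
The plan is to deduce the corollary from Valiron's Theorem~\ref{theomain2}, by identifying the continuous map $\LTan$ of the statement with the assignment $x\mapsto\Tan^{+}(F,x)$. First I would dispose of the trivial case $d=0$: a $0$-dimensional topological manifold is discrete, $\Tan^{+}(F,x)=\{0\}$ at every point by \eqref{coll6}, and both \eqref{condgluck2} and \eqref{condgluck3} are then vacuous, so both implications are immediate. Henceforth assume $d\ge1$.

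For \emph{necessity}, I would observe that a $d$-dimensional $C^{1}$-manifold is in particular a $d$-dimensional topological manifold, and then invoke the necessity half of Theorem~\ref{theomain2} (or Proposition~\ref{proppre} directly): this already delivers the Valiron condition \eqref{condval} and the continuity of $x\mapsto\Tan^{+}(F,x)$ as a map $F\to\GG(\reals^{n},d)$. Setting $\LTan(x):=\Tan^{+}(F,x)$, condition \eqref{condgluck2} holds by Proposition~\ref{proptangtrad-anal} (trivially $\Tan^{+}(F,x)\subset\Tan^{+}(F,x)$), and condition \eqref{condgluck3} is literally \eqref{condval}, since $\LTan^{+}(F,x)=\Tan^{+}(F,x)=\LTan(x)$ once $\Tan^{+}(F,x)$ is known to be a vector space.

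For \emph{sufficiency}, the step I would single out is the identity $\Tan^{+}(F,x)=\LTan(x)$ for every $x\in F$. The inclusion $\Tan^{+}(F,x)\subset\LTan(x)$ falls out of \eqref{condgluck2} and Proposition~\ref{proptangtrad-anal}. For the reverse inclusion I would argue locally: by \eqref{condgluck3} the orthogonal projection $p$ onto $\LTan(x)$ is injective on a neighbourhood $U$ of $x$ in $F$, which I may shrink so that it is homeomorphic to an open subset of $\reals^{d}$; by invariance of domain $p(U)$ is then open in $\LTan(x)$ and $p|_{U}$ is a homeomorphism onto $p(U)$. Splitting $\reals^{n}=\LTan(x)\oplus\LTan(x)^{\perp}$ and identifying $\LTan(x)\cong\reals^{d}$, this exhibits $U$ as $\graph(g)$ for a continuous $g\colon A\to\LTan(x)^{\perp}$ with $A:=p(U)$ open, and $U=F\cap W$ for some open $W\ni x$, so $\Tan^{+}(\graph(g),x)=\Tan^{+}(F,x)$. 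Since $\Tan^{+}(\graph(g),x)\subset\LTan(x)=\reals^{d}\times\{0\}=\graph(0)$, Proposition~\ref{proptangtrad-diff} applied with $L=0$ gives that $g$ is differentiable at the base point $a_{0}$ (where $x=(a_{0},g(a_{0}))$) with zero differential; as $A$ is open around $a_{0}$, the difference quotients of $\graph(g)$ along any $v\in\reals^{d}$ converge to $(v,0)$, so $(v,0)\in\Tan^{+}(F,x)$, whence $\LTan(x)\subset\Tan^{+}(F,x)$.

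With this identity I would close the loop via Theorem~\ref{theomain2}: $\Tan^{+}(F,x)=\LTan(x)$ is a $d$-dimensional vector space for every $x$ and, since $\LTan$ is continuous, $x\mapsto\Tan^{+}(F,x)$ is a continuous map $F\to\GG(\reals^{n},d)$, i.e.\ \eqref{main2} holds (equivalently $\pTan^{-}(F,x)=\LTan^{+}(F,x)=\LTan(x)$ and $\dim\LTan^{+}(F,x)=d$, using \eqref{coll4} and the continuity of $\LTan$); moreover $\LTan^{+}(F,x)=\LTan(x)$ makes \eqref{condval} coincide with \eqref{condgluck3}. Since $F$ is already assumed to be a $d$-dimensional topological manifold, Theorem~\ref{theomain2} concludes that $F$ is a $d$-dimensional $C^{1}$-manifold. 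I expect the real obstacle to be the reverse inclusion $\LTan(x)\subset\Tan^{+}(F,x)$: local injectivity of the projection by itself only produces a graph over the possibly ugly set $p(U)$, and one genuinely needs invariance of domain together with the topological-manifold hypothesis to know this base is open before the differentiability criterion of Proposition~\ref{proptangtrad-diff} can be brought to bear; everything else is routine bookkeeping with the four cones.
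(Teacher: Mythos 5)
Your proposal is correct and follows essentially the same route as the paper: necessity from Proposition \ref{proppre} (equivalently the necessity half of Theorem \ref{theomain2}), and sufficiency by proving $\Tan^{+}(F,x)=\LTan(x)$ --- one inclusion from \eqref{condgluck2} via Proposition \ref{proptangtrad-anal}, the reverse via Brouwer invariance of domain applied to the injective projection of \eqref{condgluck3} --- before concluding with Theorem \ref{theomain2}. Your graph-plus-Proposition-\ref{proptangtrad-diff} detour merely fills in the ``hence'' that the paper leaves implicit after invoking invariance of domain; it is the same argument in expanded form.
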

\begin{proof}[\textbf{\emph{Proof}}]  \emph{Necessity}. By Proposition \ref{proppre} it is obvious. \emph{Sufficiency}. Let $\hat{x}\in F$.  By \eqref{condgluck2} and Proposition \ref{proptangtrad-anal},   
\[
\Tan^{+}(F,\hat{x})\subset \LTan(\hat{x}).
\leqno(*1)\]
On  the other hand,  being  both $F$  and $\LTan(F,\hat{x})$ $d$-dimensional topological manifolds, by Brouwer domain invariance theorem and \eqref{condgluck3}
 the orthogonal projection of $F$ into $\hat{x}+\LTan(\hat{x})$  map $\Omega$ onto an open neighborhood of $\hat{x}$ in  $\hat x+\LTan(\hat{x})$; hence,
\[
\Tan^{+}(F,\hat{x})=\LTan(\hat{x}).
\leqno(*2)\]
Hence, being $\hat{x}$ an arbitrary point of $F$, the maps $x\to \Tan^{+}(F,x)$ and $x\to \LTan(F,x)$  are equal. Therefore, applying Theorem \ref{theomain2}, we have that $F$ is a $C^{1}$-manifold, as required. \end{proof}

In \cite{gluck1} and \cite{gluck2} \textsc{Gluck} shows a very elaborated and stimulating characterization of $C^{1}$-manifold which is based on ``secant map'' and  ``shape function''. Gluck's characterization can be proved by Theorem \ref{theomain3} (Severi theorem); however,   we will prove only its unidimensional instance, since introducing the ``shape function'' is not an immediate task.

\begin{corollary} [\textsc{Gluck} {\cite[(1966), p.\,200]{gluck1} and \cite[(1968), p.\,33]{gluck2}}]\label{coroGluck}
Let $F$ be a one-dimensional topological manifold of $\reals^{n}$. Then $F$ is a $C^{1}$-manifold if and only if the function $\Sigma$ (called \emph{secant map}) from $(F\times F)\setminus \{(x,x):x\in F\}$ to $\GG(\reals^{n},1)$ which assigns to each pair  $x,y$  of distinct points of $F$ the unidimensional vector space generated by $x-y$, admits a continuous extension  over all $F\times F$. 
\end{corollary}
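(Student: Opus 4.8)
The plan is to reduce both implications to Severi's theorem \ref{theomain3} (applied with $d=1$) and to Proposition \ref{proppre}, the common bridge being the translation of convergence of secant lines into paratangency. Throughout I use that a one-dimensional topological manifold has no isolated points, so every point of $F$ lies in $\der(F)$, and that for a nonzero $w\in\reals^{n}$ the assignment $w\mapsto\reals w$ is a continuous map into $\GG(\reals^{n},1)$ (this is the content of Proposition \ref{propconv}: convergence of unit vectors forces $\ang\to0$ of the spanned lines).

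\emph{Necessity.} Suppose $F$ is a $C^{1}$-manifold. By Proposition \ref{proppre}, $\Tan^{+}(F,x)\in\GG(\reals^{n},1)$ for each $x\in F$ and, by \eqref{pre4}, the map $x\mapsto\Tan^{+}(F,x)$ is continuous on $F$. Define $\bar\Sigma\colon F\times F\to\GG(\reals^{n},1)$ by $\bar\Sigma(x,y):=\Sigma(x,y)$ for $x\neq y$ and $\bar\Sigma(x,x):=\Tan^{+}(F,x)$. Off the diagonal $\bar\Sigma$ is continuous since $(x,y)\mapsto x-y$ is continuous and nonvanishing there and $w\mapsto\reals w$ is continuous on $\reals^{n}\setminus\{0\}$; along the diagonal it is continuous by \eqref{pre4}. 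For a general sequence $(x_{m},y_{m})\to(\hat x,\hat x)$ with $x_{m}\neq y_{m}$, pass to any subsequence along which $\tfrac{x_{m}-y_{m}}{\|x_{m}-y_{m}\|}$ converges to a unit vector $v$; setting $\lambda_{m}:=\|x_{m}-y_{m}\|\to0^{+}$ one gets $\tfrac{x_{m}-y_{m}}{\lambda_{m}}\to v$ with $y_{m}\to\hat x$, so $v\in\pTan^{+}(F,\hat x)$ by \eqref{paraseq}. Since $F$ is a $C^{1}$-manifold, $\pTan^{+}(F,\hat x)=\Tan^{+}(F,\hat x)$ by \eqref{pre3}, a line, hence $\reals v=\Tan^{+}(F,\hat x)$, and therefore $\Sigma(x_{m},y_{m})\to\Tan^{+}(F,\hat x)$ along that subsequence. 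Thus every subsequential limit of $\{\Sigma(x_{m},y_{m})\}_{m}$ equals $\bar\Sigma(\hat x,\hat x)$, and compactness of $\GG(\reals^{n},1)$ forces $\Sigma(x_{m},y_{m})\to\bar\Sigma(\hat x,\hat x)$. A sequence mixing $x_{m}=y_{m}$ and $x_{m}\neq y_{m}$ is handled by splitting it into these two subsequences, each of which converges to $\Tan^{+}(F,\hat x)$. Hence $\bar\Sigma$ is the desired continuous extension.

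\emph{Sufficiency.} Suppose $\Sigma$ extends to a continuous $\bar\Sigma\colon F\times F\to\GG(\reals^{n},1)$. As $F$ is already a one-dimensional topological manifold, by Severi's theorem \ref{theomain3} it suffices to verify the Severi simplicity condition \eqref{main3} with $d=1$, that is, $\dim(\pLTan^{+}(F,\hat x))\le1$ for every $\hat x\in F$. Fix such an $\hat x$ and a non-null $v\in\pTan^{+}(F,\hat x)$. By \eqref{paraseq} there are $\lambda_{m}\to0^{+}$ and $x_{m},y_{m}\in F$ with $y_{m}\to\hat x$ and $\tfrac{x_{m}-y_{m}}{\lambda_{m}}\to v$. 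Since $v\neq0$, eventually $x_{m}\neq y_{m}$, also $x_{m}\to\hat x$, and $\tfrac{\lambda_{m}}{\|x_{m}-y_{m}\|}\to\tfrac{1}{\|v\|}$, so $\tfrac{x_{m}-y_{m}}{\|x_{m}-y_{m}\|}\to\tfrac{v}{\|v\|}$. Continuity of $\bar\Sigma$ at $(\hat x,\hat x)$ together with continuity of $w\mapsto\reals w$ on the unit sphere (Proposition \ref{propconv}) gives $\reals v=\lim_{m}\Sigma(x_{m},y_{m})=\bar\Sigma(\hat x,\hat x)=:\ell$; in particular $v\in\ell$. As $v$ was an arbitrary non-null paratangent vector (and $0\in\ell$ trivially), $\pTan^{+}(F,\hat x)\subset\ell$, whence $\dim(\pLTan^{+}(F,\hat x))\le\dim\ell=1$. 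Severi's theorem \ref{theomain3} then yields that $F$ is a ($1$-dimensional) $C^{1}$-manifold.

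\emph{Main obstacle.} The only genuinely delicate part is the bookkeeping in the necessity direction: treating sequences that approach the diagonal while oscillating between diagonal and off-diagonal points, and, crucially, recognizing that the convergence of secant lines to the tangent line at a point is exactly where the $C^{1}$ hypothesis enters, through the identity $\pTan^{+}(F,\hat x)=\Tan^{+}(F,\hat x)$ of \eqref{pre3} of Proposition \ref{proppre} (compare Peano's Proposition \ref{prop:Ppara}). The sufficiency direction is a direct unwinding of the sequential definition \eqref{paraseq} of the upper paratangent cone against Severi's theorem \ref{theomain3}.
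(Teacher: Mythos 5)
Your proposal is correct and follows essentially the same route as the paper: extend $\Sigma$ by $\Tan^{+}(F,x)$ on the diagonal for necessity, and for sufficiency observe that the diagonal value of the extension is a one-dimensional vector space containing $\pTan^{+}(F,\hat x)$ (i.e.\ paratangent in traditional sense), so that Severi's theorem \ref{theomain3} applies. The only difference is that you spell out the continuity verification and the subsequence bookkeeping that the paper leaves implicit.
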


\begin{proof}[\textbf{\emph{Proof}}] \emph{Necessity}. Assume $F$ is a $C^{1}$-manifold of $\reals^{n}$. In order to have  the required  extension,  it is enough to  assign $\Tan^{+}(F,x)$ to every pair $(x,x)$.  \emph{Sufficiency}. Let $\pLTan(F,x)$ denote the value of the extension of  $\Sigma$ at $(x,x)$. Clearly $\pLTan(F,x)$ is a one-dimensional vector space which is paratangent in traditional sense at $x$. Therefore,  by Theorem \ref{theomain3} $F$ is a one-dimensional $C^{1}$-manifold. 
\end{proof}

Another  modern characterization of $C^{1}$-manifold is due to \textsc{Shchepin} and \textsc{Repov\v{s}} (2000); by four-cones theorem it can be rigorously proved.

\begin{corollary} [\textsc{Shchepin and Repov\v{s}} {\cite[(2000), p.\,2717]{repovs2} }]\label{cororepovs} A non-empty subset $F$ of $\reals^{n}$  is a $d$-dimensional $C^{1}$-manifold   if and only if $F$ is locally compact  and the upper-tangent and  upper-paratangent cones to $F$ coincide and  their linear hull is a $d$-dimensional vector space  at every point, i.e.,
\begin{equation}\label{cororepovs1}
\Tan^{+}(F,x)=\pTan^{+}(F,x)\text{ and } \dim(\LTan^{+}(F,x))=d\text{ for every }x\in F.\,\,\,\,\,\Box
\end{equation}  
\end{corollary}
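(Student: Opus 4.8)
The plan is to take necessity directly from Proposition~\ref{proppre} and to prove sufficiency by induction on $n$, imitating the proof of the local Four-cones Theorem~\ref{theomainlem}. Necessity: if $F$ is a $d$-dimensional $C^{1}$-manifold, then by \eqref{pre3} all four tangent cones coincide at each point and by \eqref{pre2} their common value is a $d$-dimensional vector space, which is exactly \eqref{cororepovs1}. For sufficiency, suppose $F$ is locally compact and $\Tan^{+}(F,x)=\pTan^{+}(F,x)$, $\dim(\LTan^{+}(F,x))=d$ for every $x\in F$; taking linear hulls in the first equality also gives $\dim(\pLTan^{+}(F,x))=d$ for every $x$. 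Fix $\hat x\in F$ and distinguish three cases according to the value of $d$.

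If $d=0$, then $\Tan^{+}(F,\hat x)=\LTan^{+}(F,\hat x)=\{0\}$, so $\hat x$ is isolated in $F$ by \eqref{coll6} and $F$ is a $0$-dimensional $C^{1}$-manifold at $\hat x$. If $d=n$, then $\LTan^{+}(F,x)=\reals^{n}$ for all $x\in F$, while $\Tan^{+}(F,x)=\pTan^{+}(F,x)=-\pTan^{+}(F,x)=-\Tan^{+}(F,x)$ by \eqref{coll1}; hence $F$ is open by the characterization \eqref{coll8} (note this replaces the use of \eqref{coll5rock} in the proof of Theorem~\ref{theomainlem}, which there relied on $\pTan^{-}=\pTan^{+}$ being already available), so $\hat x\in\intt(F)$ and $F$ is an $n$-dimensional $C^{1}$-manifold at $\hat x$. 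If $0<d<n$, first observe $\dim(\LTan^{+}(F,x))\ge1$ forces $\Tan^{+}(F,x)\ne\{0\}$, so $F\subset\der(F)$ by \eqref{coll6}; choose coordinates with $e_{n}\notin\pLTan^{+}(F,\hat x)$. The crucial step is to produce a ball $\B_{\delta}(\hat x)$ such that
\[
e_{n}\notin\pLTan^{+}(F,x)\quad\text{for every }x\in F\cap\B_{\delta}(\hat x).
\]
Granting this, Lemma~\ref{corosdiffsequel} supplies $\varepsilon\le\delta$, a set $A\subset\reals^{n-1}$ with $A\subset\der(A)$ and locally compact at the point $\hat t$ with $(\hat t,\varphi(\hat t))=\hat x$, and a strictly differentiable $\varphi\colon A\to\reals$ with $\graph(\varphi)=F\cap\B_{\varepsilon}(\hat x)$. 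Since $\B_{\varepsilon}(\hat x)$ is open, Lemma~\ref{lemUno-strict} together with its verbatim analogue for $\Tan^{+}$ gives, for every $t\in A$, $\pTan^{+}(\graph\varphi,(t,\varphi(t)))=\{(v,L(v)):v\in\pTan^{+}(A,t)\}$, $\Tan^{+}(\graph\varphi,(t,\varphi(t)))=\{(v,L(v)):v\in\Tan^{+}(A,t)\}$ and hence $\LTan^{+}(\graph\varphi,(t,\varphi(t)))=\{(v,L(v)):v\in\LTan^{+}(A,t)\}$; because $v\mapsto(v,L(v))$ is an injective linear map, these identities transport the hypotheses down to $A$, namely $\Tan^{+}(A,t)=\pTan^{+}(A,t)$ and $\dim(\LTan^{+}(A,t))=d$ for every $t\in A$. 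By the inductive hypothesis in $\reals^{n-1}$, $A$ is a $d$-dimensional $C^{1}$-manifold at $\hat t$, so Lemma~\ref{lemDue-strict} makes $\graph(\varphi)=F\cap\B_{\varepsilon}(\hat x)$ a $C^{1}$-manifold of $\reals^{n}$ at $\hat x$, completing the induction.

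The hard part will be the displayed transversality propagation in the last case, equivalently the upper semicontinuity of $x\mapsto\pLTan^{+}(F,x)$ as a $\GG(\reals^{n},d)$-valued map: in the proof of Theorem~\ref{theomainlem} the analogous step was immediate from \eqref{coll2} precisely because the upper paratangent cones were there assumed equal to their linear hulls, whereas here the only structure at hand is that $\pTan^{+}(F,x)$ is bilateral (\eqref{coll1}) with a $d$-dimensional linear hull. The route I would follow: by \eqref{coll2} one has $\Ls_{F\ni x\to\hat x}\pTan^{+}(F,x)\subset\pTan^{+}(F,\hat x)$; if the propagation failed there would be $x_{m}\to\hat x$ in $F$ with $e_{n}\in\pLTan^{+}(F,x_{m})$, and by compactness of $\GG(\reals^{n},d)$ a subsequence along which $\pLTan^{+}(F,x_{m})$ converges to some $V\in\GG(\reals^{n},d)$ with $e_{n}\in V$, so it would suffice to show $V\subset\pLTan^{+}(F,\hat x)$ (equality of dimensions then giving $V=\pLTan^{+}(F,\hat x)\not\ni e_{n}$). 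Choosing unit vectors $u_{1}^{(m)},\dots,u_{d}^{(m)}\in\pTan^{+}(F,x_{m})$ spanning $\pLTan^{+}(F,x_{m})$ and extracting convergent subsequences $u_{i}^{(m)}\to u_{i}$, one has $u_{i}\in\pTan^{+}(F,\hat x)\subset\pLTan^{+}(F,\hat x)$ by \eqref{coll2} and $u_{i}\in V$; the sole delicate point is to arrange that $\{u_{i}\}$ stays linearly independent, i.e. to exclude the degeneration in which the cones $\pTan^{+}(F,x_{m})$ collapse onto a subspace of dimension $<d$, and for this I expect to also need $F\subset\der(F)$, local compactness, and \eqref{coll4} (the convergence bookkeeping being that of Corollary~\ref{coroconv2}, exactly as in the passage $(*2)\Rightarrow(*4)$ of the proof of Tierno's Theorem~\ref{corotierno}). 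Once this is secured the induction closes, and in fact the same circle of ideas yields $\pTan^{-}(F,x)=\pTan^{+}(F,x)$ for every $x\in F$, so the conclusion may alternatively be read off from the global Four-cones Theorem~\ref{theomain} or from Theorem~\ref{corotierno}.
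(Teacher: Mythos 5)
The paper offers no proof of Corollary~\ref{cororepovs}: it is stated with a terminal $\Box$ after the announcement that it ``can be rigorously proved'' via the four-cones theorem, so the implicit route is to verify condition \eqref{mainlab1} (or the Tierno condition \eqref{tiernoC}) from the hypotheses \eqref{cororepovs1}. Against that background, your necessity argument, the cases $d=0$ and $d=n$ (the substitution of \eqref{coll8} for \eqref{coll5rock} is legitimate, since bilaterality of $\Tan^{+}=\pTan^{+}$ comes from \eqref{coll1} and the hypothesis holds at every point), and the descent of the hypotheses to $A$ through Lemmas~\ref{corosdiffsequel}, \ref{lemUno-strict} and \ref{lemDue-strict} are all sound.

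There is, however, a genuine gap, and you have located it yourself: the propagation of $e_{n}\notin\pLTan^{+}(F,x)$ to a ball around $\hat x$. In Theorem~\ref{theomainlem} this is immediate from \eqref{coll2} because there $\pTan^{+}=\pLTan^{+}$; here $\pTan^{+}(F,x)$ is only a closed bilateral cone with $d$-dimensional linear hull --- think of a union of several lines inside a $d$-plane. Your scheme (take $V=\lim_{m}\pLTan^{+}(F,x_{m})$ in $\GG(\reals^{n},d)$, span it by limits of unit vectors $u_{i}^{(m)}\in\pTan^{+}(F,x_{m})$, push them into $\pTan^{+}(F,\hat x)$ by \eqref{coll2}) breaks down precisely when the cones $\pTan^{+}(F,x_{m})$, though each spanning $d$ dimensions, concentrate near a subspace of dimension $<d$ (two lines whose mutual angle collapses): every choice of spanning unit vectors then degenerates in the limit, \eqref{coll2} delivers only a proper subspace of $V$, and the conclusion $V=\pLTan^{+}(F,\hat x)$ is lost. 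Writing that you ``expect to also need'' $F\subset\der(F)$, local compactness and \eqref{coll4} does not close this: none of \eqref{coll1}--\eqref{coll7} excludes the degeneration, and excluding it is in substance equivalent to showing that $\pTan^{+}(F,x)$ is a vector space, which would already reduce the corollary to Theorem~\ref{corotierno}. A repair is to propagate transversality of the \emph{cone} rather than of its hull: with $P:=\pLTan^{+}(F,\hat x)$, the inclusion $\Ls_{F\ni x\to\hat x}\pTan^{+}(F,x)\subset\pTan^{+}(F,\hat x)\subset P$ from \eqref{coll2} forces $\pTan^{+}(F,x)\cap P^{\perp}=\{0\}$ for $x$ near $\hat x$ (a unit vector in the intersection would have a cluster point in $P\cap P^{\perp}$), so the second clause of \eqref{coll7ante} with $V=P^{\perp}$, $W=P$ realizes $F$ locally as a graph over a subset of the $d$-plane $P$, on which \eqref{coll8} can then be brought to bear; but as written your induction does not go through.
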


\end{document}